\theoremstyle{plain}
\newtheorem{theorem}{Theorem}
\newtheorem{corollary}{Corollary}
\newtheorem{proposition}{Proposition}
\newtheorem*{2'}{Theorem 2'}
\newtheorem*{3'}{Theorem 3'}
\theoremstyle{remark}
\newtheorem*{Remark 1}{Remark 1}
\newtheorem*{Remark 2}{Remark 2}
\newtheorem*{Remark 3}{Remark 3}
\newtheorem*{Remark 4}{Remark 4}
\newtheorem*{Clemma}{\bf Combinatorial Lemma}
\numberwithin{equation}{section}
\begin{document}

\title [A View From   the Bridge Spanning Combinatorics and Probability]
{A View From the Bridge  Spanning Combinatorics and Probability}

\author{Ross G. Pinsky}

%\noindent  pinsky@math.technion.ac.il\ \ \ \ tel: 972-4-829-4083\ \ \  fax: 972-4-829-3388

\address{Department of Mathematics\\
Technion---Israel Institute of Technology\\
Haifa, 32000\\ Israel}
\email{ pinsky@math.technion.ac.il}

\urladdr{http://www.math.technion.ac.il/~pinsky/}

\subjclass[2010]{05-02, 60C05} \keywords{Combinatorics, Probability  }
\date{}

\begin{abstract}
This paper presents an offering of some of the myriad connections between Combinatorics and Probability, directed in particular toward combinatorialists.
% Most of the results presented here are accompanied by complete accessible proofs, and for   most of the  remaining ones
% detailed sketches of the proofs are supplied.
 The choice of material was dictated by the author's own interests, tastes and
 familiarity, as well as by
 a desire to present  results with either  complete proofs or
well developed sketches of proofs, and to ensure
that the arguments  are rather accessible to combinatorialists.
The first several sections collect some concepts and rudimentary results from probability theory
that are needed to understand the rest of the paper.
\end{abstract}

\maketitle

\tableofcontents

\section{Introduction}\label{intro}
I was approached by the editors to write a survey article, accessible to combinatorialists, discussing the ``bridge'' between  combinatorics and probability.
The oldest lanes  on that bridge are more than three hundred years old.
Probability problems from that period and for quite some time afterwards as well, involving cards and dice, helped spur the development of basic combinatorial principles, which were then applied to solve the probability problems.
 The initial lanes constructed in the other direction might be said to exist almost as long; indeed, de Moivre's seminal 1733 paper proving the Central Limit Theorem in the context of fair coin-tossing may be thought of as
a way of estimating binomial coefficients through a probabilistic lens. A very different aspect of probability
in the service of combinatorics
 dates to the 1940's when  the so-called ``Probabilistic Method'' was first popularized by Erd\H{o}s.
Around the same time,  an interest in random permutations began to develop among certain probabilists.
In this latter case, it is difficult to say which field is the giver and which the receiver.
 The rising tide  of graph theory and  computer algorithms
over the past seventy years has certainly  provided fertile ground for the use of probability in combinatorics.
By today this bridge between combinatorics and probability
has amassed  a multitude of decks and lanes, far more than can be dealt with in a  single survey  of modest length.
Thus, the choice of material was dictated by my own interests, tastes and
 familiarity.
It was also   guided  by a desire to present  results with either  complete proofs or
well developed sketches of proofs, and to ensure
that the arguments  are rather accessible to combinatorialists, including graduate students in the field.
  A large majority of the material herein concerns enumerative combinatorics.
For more on the  historical view of the connection between combinatorics and probability, see
 \cite{DB,Feller1, Rior}.

Elementary probability based on equally likely outcomes is almost by definition co-extensive with
enumerative combinatorics. Today, probabilists sometimes study combinatorial structures on which  probability
distributions other than the  uniform
ones are used. These other distributions are usually constructed as an exponential family with a parameter.
For example, in the context of random permutations, a lot of work has
 been done in recent years in the case that the uniform measure is replaced by a measure from the family of  Ewens sampling
distributions or from the family of  Mallows distributions. For the Ewens sampling distributions, see for example
\cite{ABT, Fer}; for the Mallows distributions, see for example  \cite{BP,GP,GO,MS, Pin21}.
In this survey, we restrict to the uniform distribution on combinatorial objects.

 I now note several from among many of the important and interesting directions that I do not touch upon  at all.   One such direction is Analytic Combinatorics, where probability
 and complex function theory unite  to provide answers to problems concerning the probabilistic structure of large combinatorial objects. We refer the reader
 to the book \cite{FS} for a comprehensive treatment and many additional references.
 See also the book \cite{PW}.
 Another such direction is the problem of the length of the longest increasing subsequence in a random permutation. This
 very specific problem turns out to be packed with a
 great  wealth  and variety of mathematical ideas and directions.
 The original seedling for this direction was provided in \cite{H}. Then fundamental contributions were
 provided in \cite{VK,LS}, followed by the tour-de-force \cite{BDJ}.
  We refer the reader to the book \cite{Rom}
 for a very readable treatment  and comprehensive references.
Another direction not touched upon here is random trees. We refer the
reader to the book \cite{D} and the survey article \cite{LeG}, along with references therein, to  the classical articles \cite{Al1,Al2}  on
the Brownian continuum random tree, and for example to the papers \cite{BM,MM, LeGM}.
Yet another direction not discussed in this article  is the probabilistic aspects of pattern avoiding permutations.
See for example \cite{HRS1,HRS2,J1,J2,J3} and references therein.
For the combinatorial aspects of pattern avoiding permutations,  see for example \cite{Bona}.

A nice set of lecture notes on combinatorics and probability is provided by \cite{Pitman06}.
The paper \cite{Pitman01-02} aims to bridge several results in combinatorics and probability.
The book \cite{Sachkov} concerns   probabilistic methods in solving combinatorial problems.
A combinatorics book with many probabilistic exercises is \cite{Co}.

To access the bridge on the probability side, one must pass a toll booth, which consists of some basic probability.
 This is the material in  sections \ref{concepts}, \ref{distributions} and \ref{tools}. An excellent reference for this material is \cite{Durrett}; see also
 \cite{Feller1}.
Readers who so wish   can skirt the toll booth and access the bridge at section
\ref{PST}, referring back if and  when necessary .

 \section{Some basic probabilistic concepts}\label{concepts}

Let $\Omega$ be a set,  $\mathcal{F}$  a $\sigma$-algebra on $\Omega$ and  $P$ a probability measure on
the measure space $(\Omega,\mathcal{F})$. The triplet $(\Omega, \mathcal{F},P)$ is called a probability space.
In the case that $\Omega$ is finite or countably infinite, there is no reason not to choose $\mathcal{F}$ to be the $\sigma$-algebra consisting of all subsets of $\Omega$.
In such cases, we will refer to $(\Omega,P)$ as a probability space.
A  random variable
$X$ on the probability space $(\Omega, \mathcal{F},P)$ is a measurable function from $(\Omega,\mathcal{F})$ to $(\mathbb{R},\mathcal{B})$, where
$\mathcal{B}$ is the Borel $\sigma$-algebra on $\mathbb{R}$.
 For $A\in\mathbb{B}$, we write $P(X\in A)$ for $P(X^{-1}(A))$.
 The expected value of $X$ is defined as
 $$
 EX=\int_{\Omega}XdP,
 $$
  if $X$ is integrable (that is, if $\int_\Omega|X|dP<\infty$) and if so, then  the variance  of $X$ is defined
as
$$
\sigma^2(X)=E(X-EX)^2=\int_\Omega(X-EX)^2dP,
$$
 which
 can be written as
 $$
 \sigma^2(X)=EX^2-(EX)^2.
 $$
 For   $k\in\mathbb{N}$, the $k$th moment of the random variable is defined as
 $$
 EX^k=\int_\Omega X^kdP,
 $$
 if $\int_\Omega|X|^kdP<\infty$. If $\int_\Omega|X|^kdP=\infty$, we say that the $k$th moment does not exist.
A subset $D\subset \Omega$ is called an event.
A collection $\mathcal{D}$ of events  is called independent
if for every finite or countably infinite collection
  $\{D_j\}_{j\in I}\subset\mathcal{D}$, one has $P(\cap_{j\in I}D_j)=\prod_{j\in I}P(D_j)$.
  A sequence $\{X_n\}_{n=1}^\infty$ of  random variables is called independent if
  $P(X_1\in A_1,X_2\in A_2,\cdots)=\prod_{n=1}^\infty P(X_n\in A_n)$, for  all Borel sets $A_n\subset\mathbb{R},\ n=1,2,\cdots$.

Given events $A,B$ with $P(A)>0$, the conditional probability of the event $B$ given the event $A$ is denoted by $P(B|A)$ and defined
by
$$
P(B|A)=\frac{P(A\cap B)}{P(A)}.
$$

The distribution $\nu$ of a random variable $X$ is the probability measure on $\mathbb{R}$
defined $\nu(A)=P(X\in A)$, for Borel sets $A\in\mathcal{B}$.
The distribution function  $F(x)$ of a random variable $X$ is defined by
$$
F(x)=P(X\le x),\ x\in\mathbb{R}.
$$
It follows that $F$ is continuous at $x$ if and only if $x$ is not an atom for the distribution $\nu$ of $X$; that is if and only if $\nu(\{x\})=P(X=x)=0$.
Given a probability measure $\nu$ on $\mathbb{R}$, one can always
construct  a probability space and a random variable on it whose distribution is $\nu$.

A sequence $\{\nu_n\}_{n=1}^\infty$ of probability measures on $\mathbb{R}$ is said to converge weakly to the probability measure $\nu$ on $\mathbb{R}$  if $\lim_{n\to\infty}\nu_n(A)=\nu(A)$, for all Borel sets $A\in\mathcal{B}$ which satisfy
$\nu(\partial A)=0$, where $\partial A$ denotes the boundary of $A$.
 With the help of some nontrivial measure-theoretic machinery, weak convergence
can be shown  to  be equivalent to the seemingly weaker requirement that
replaces the class of all Borel sets above by the class of rays $(-\infty,x]$, with $x\in\mathbb{R}$.

 A sequence $\{X_n\}_{n=1}^\infty$ of random variables is said to converge in  distribution to a random variable
  $X$  if their distributions $\{\nu_n\}_{n=1}^\infty$ converge weakly to the distribution $\nu$ of $X$.
In light of the  above two paragraphs, this requirement can be written as
$$
\lim_{n\to\infty}F_n(x)=F(x),\ \text{for all}\ x\in \mathbb{R}\ \text{at which }\ F\ \text{is continuous}.
$$
%(We have written the same $P$ for all the random variables above, but in fact this is not necessary; indeed, each random %variable could be defined on its own probability space.)
We write $X_n\stackrel{\text{dist}}{\to}X$.
Note that if $X$ is almost surely equal to the constant $c$, then $X_n\stackrel{\text{dist}}{\to}X$
is equivalent to
$\lim_{n\to\infty}P(|X_n-c|\ge\epsilon)=0$, for every $\epsilon>0$.

\section{The binomial distribution, the Poisson approximation and some other classical distributions}\label{distributions}
For each $n\in\mathbb{N}$ and each $p\in[0,1]$, the \it binomial distribution\rm\ $\text{Binomial}(n,p)$ describes the
random number $X_{n,p}$ of ``successes'' in a sequence of  independent trials, where for each trial the probability of success
is $p$. Such trials are known as \it Bernoulli trials.\rm\
Simple probabilistic and combinatoric reasoning reveals that
$$
P(X_{n,p}=j)=\binom njp^j(1-p)^{n-j},\  j=0,1,\cdots, n.
$$

A random variable $X_\lambda$ has the \it Poisson distribution\rm\ with parameter $\lambda>0$
  if
$$
P(X_\lambda=j)=e^{-\lambda}\frac{\lambda^j}{j!},\ j=0,1\cdots.
$$
We denote this distribution by $\text{Poisson}(\lambda)$.
Unlike the case of the binomial distribution, at first glance
one does not see any combinatorics here; yet as we shall see directly below and in later sections,
quite a bit of combinatorics is  lurking behind the scenes.
A basic result appearing in a first course in probability theory is the ``Poisson approximation to the binomial
distribution,'' which states that for large $N$ and small $p$, the  Poisson$(Np)$ distribution approximates the Binomial$(N,p)$ distribution.
More precisely,
\begin{equation*}
\lim_{N\to\infty, Np\to\lambda}P(X_{N,p}=j)=P(X_\lambda=j), \ j=0,1,\cdots;
\end{equation*}
that is,
\begin{equation}\label{PAB}
\lim_{N\to\infty,Np\to\lambda}\binom Njp^j(1-p)^{N-j}=e^{-\lambda}\frac{\lambda^j}{j!}, \ j=0,1,\cdots.
\end{equation}
The proof of \eqref{PAB} is an easy calculus exercise. We note that \eqref{PAB} is equivalent to  the statement
$$
X_{n,p_n}\stackrel{\text{dist}}{\to}\ X_\lambda\
\text{as}\ n\to\infty \ \text{and}\  np_n \to\lambda.
$$

The Poisson distribution arises in a wealth of applications, the most canonical of which is probably
with regard to call centers. Under many real-life situations, the number of calls that arrive at a call center
during $t$ units of time, for any $t>0$, can be modeled by the Poisson distribution with parameter $\lambda t$,
where $\lambda$ is referred to as the intensity parameter. The intuition behind this is as follows.
Fix $t>0$, and let $n\in\mathbb{N}$. Divide the time interval $[0,t]$ into subintervals $\{J^{(n)}_i\}_{i=1}^n$
all of length $\frac tn$. One
makes two assumptions: (i) the (random) number of calls in different subintervals are independent and identically distributed; (ii)
in each subinterval $J^{(n)}_i$,  the probability of
at least one call is $\frac{\lambda t} n+o(\frac1n)$, for some appropriate $\lambda>0$ that characterizes the call center,
and the probability of more than one call is $o(\frac1n)$.
Under this scenario, it is easy to see that the distribution of the  number of calls can be approximated by
Bin$(n,\frac{\lambda t} n)$. (Note that the probability that no interval has more than one call is given by
$(1-o(\frac1n))^n$, which approaches one as $n\to\infty$.)
And by the Poisson approximation, Binomial$(n,\frac{\lambda t} n)$ is close to the  Poisson$(\lambda t)$ distribution.
For basic applications of the Poisson approximation, see \cite{Feller1}, and for a more extensive study, see
\cite{BHJ}.

A random variable $X$ has the \it geometric distribution\rm\ with  parameter $\rho\in(0,1)$ if
 $P(X=k)=\rho^k(1-\rho)$, $k=0,1,\cdots$.

So far all the distributions we've presented have been discrete; that is there exists a finite or countably infinite set $\{x_j\}_{j\in I}$,
such that a random variable $X$  with the distribution in question satisfies $P(X=x_j)>0$, for all $j\in I$, and $\sum_{j\in I}P(X=x_j)=1$. Now we recall three continuous distributions that will appear in this survey. A distribution is continuous if a random variable with the distribution
in question satisfies $P(X=x)=0$, for all $x\in\mathbb{R}$; equivalently, the random variable's distribution function $F(x)$ is a continuous function.

A random variable $X$ has the  \it exponential distribution\rm\ with parameter $\lambda>0$ if its distribution function
is given by $F(x)=1-e^{-\lambda x}$, for $x\ge0$. Its  density function is given by $f(x):=F'(x)=\lambda e^{-\lambda x},\ x\ge0$.
A random variable has the \it standard normal\rm\ (or \it Gaussian\rm) distribution, denoted by $N(0,1)$, if its distribution function
is given by $F(x)=\int_{-\infty}^x\frac{e^{-\frac{y^2}2}}{\sqrt{2\pi}}dy$, $x\in\mathbb{R}$.
Its density function is
$f(x)=\frac{e^{-\frac{x^2}2}}{\sqrt{2\pi}}$, $x\in\mathbb{R}$.
For $X_{N(0,1)}$  a random variable with the standard normal distribution, one has $EX_{N(0,1)}=0$ and
$\sigma^2(X_{N(0,1)})=1$.
A random variable $U$ has the \it uniform distribution \rm on $[0,1]$ if
its distribution function satisfies $F(x)=x$, for $x\in[0,1]$. Its density
function is  $f(x)=1$, $x\in[0,1]$.

\section{Some basic probabilistic tools and results}\label{tools}
Consider the probability space $(\Omega,\mathcal{F},P)$.
For events $\{D_j\}_{j=1}^N$, we have by the subadditivity of measures,
$$
P(\cup_{j=1}^ND_j)\le \sum_{j=1}^NP(D_j),
$$
with equality if and only if $P(D_i\cap D_j)=0$, for $i\neq j$.

Let $X$ be a random variable,  denote its expected value by $EX$ and
let $\sigma^2(X)=E(X-EX)^2$ denote its variance.
\it Markov's inequality\rm\ states that
$$
P(|X|\ge L)\le \frac1L E|X|,\ \text{for}\ L>0.
$$
 Multiplying both sides of the above inequality by $L$, one sees that its proof is essentially immediate from the definition of the expectation.
We recall \it Chebyshev's inequality\rm, an extremely simple but often very useful formula.
It states that
$$
P(|X-EX|\ge\lambda)\le \frac{\sigma^2(X)}{\lambda^2},\  \lambda>0.
$$
Here is a one-line proof using Markov's inequality (with $L=1$):
$$
P(|X-EX|\ge\lambda)=P\left(\frac{(X-EX)^2}{\lambda^2}\ge1\right)\le E\frac{(X-EX)^2}{\lambda^2}= \frac{\sigma^2(X)}{\lambda^2}.
$$

Let $\{X_n\}_{n=1}^\infty$ be a sequence of
random variables and let $S_n=\sum_{j=1}^nX_j$. We have
$$
ES_n=\sum_{j=1}^nEX_j.
$$
 If the random variables are independent and have finite variances, then a rather straightforward calculation reveals that
the variance of the sum equals the sum of the variances:
$$
\sigma^2(S_n)=\sum_{j=1}^n\sigma^2(X_j).
$$
Now assume that $\{X_n\}_{n=1}^\infty$ are independent and identically distributed (henceforth IID).
Assuming that the expected value exists for these random variables,
and denoting it by $\mu$,
the \it Weak Law of Large Numbers \rm\ holds; namely,
$$
\frac{S_n}n\stackrel{\text{dist}}{\to}\mu.
$$
As noted earlier, the above result is equivalent to
\begin{equation}\label{WLtoconst}
\lim_{n\to\infty}P\left(|\frac{S_n}n-\mu|\ge \epsilon\right)=0,\ \text{for all}\ \epsilon>0.
\end{equation}
Under the additional assumption of finite variance,  \eqref{WLtoconst} follows immediately by applying Chebyshev's inequality to the random variable $\frac{S_n}n$,
and using the fact that $\sigma^2(S_n)=n\sigma^2(X_1)$.

Assuming  that the IID random variables $\{X_n\}_{n=1}^\infty$ have a finite variance $\sigma^2$, then
the \it Central Limit Theorem\rm\ holds. This result describes the probabilistic fluctuations of $\frac{S_n}n$ from
it expected value $\mu$:
$$
\frac{\sqrt n}\sigma\big(\frac{S_n}n-\mu\big)=\frac{S_n-n\mu}{\sigma\sqrt n}\stackrel{\text{dist}}{\to} X_{N(0,1)},
$$
where $X_{N(0,1)}$ denotes a random variable with the standard normal distribution.

More generally, if $\{W_n\}_{n=1}^\infty$ is an arbitrary sequence of random variables
with finite expectations,   we say that the weak law of large numbers holds for
$\{W_n\}_{n=1}^\infty$ if
$$
\frac{W_n}{EW_n}\stackrel{\text{dist}}{\to}1,
$$
 and
if furthermore, the variances $\sigma^2(W_n)$ are finite, we say that the central limit theorem holds for
$\{W_n\}_{n=1}^\infty$
if
$$
\frac{W_n-EW_n}{\sigma(W_n)}\stackrel{\text{dist}}{\to} X_{N(0,1)}.
$$

Let        $\{W_n\}_{n=1}^\infty$ be an arbitrary sequence of  random variables.
The \it first moment method\rm\ states that:
\begin{equation}\label{firstmommeth}
\text{if} \
\lim_{n\to\infty}E|W_n|=0,\ \text{then}\ W_n\stackrel{\text{dist}}{\to}0.
\end{equation}
The proof is immediate from Markov's inequality; indeed,
$$
P(|W_n|\ge\epsilon)\le \frac1\epsilon E|W_n|\stackrel{n\to\infty}{\to}0,\ \text{for all}\ \epsilon>0.
$$
The \it second moment method\rm\ states that
\begin{equation}\label{secondmommeth}
\text{if}\ \sigma^2(W_n)=o((EW_n)^2),\ \text{then}\
\frac{W_n}{EW_n}\stackrel{\text{dist}}{\to}1.
\end{equation}
The proof is almost immediate from Chebyshev's inequality.  Indeed, let $Y_n=\frac{W_n}{EW_n}$. Then  $EY_n=1$
and $\sigma^2(Y_n)=\frac1{(EW_n)^2}\sigma^2(W_n)$, where we use the fact that
$\sigma^2(cX)=c^2\sigma^2(X)$, if $X$ is a random variable and $c$ is a constant.
Thus, by Chebyshev's inequality, for any $\epsilon>0$,
$$
P(|\frac{W_n}{EW_n}-1|\ge\epsilon)=P(|Y_n-EY_n|\ge\epsilon)\le\frac{\sigma^2(Y_n)}{\epsilon^2}=
\frac{\sigma^2(W_n)}{\epsilon^2(EW_n)^2}\stackrel{n\to\infty}{\to}0.
$$

A frequently employed tool in these pages is the method of indicator random variables.
The indicator random variable $I_A$ for an event $A\subset \Omega$ is defined by
$$
1_A(\omega)=
\begin{cases} 1,\ \text{if}\ \omega\in A;\\0,\ \text{if}\ \omega\in\Omega- A.\end{cases}
$$
 We  illustrate this tool first with the case of a binomial random variable.
According to the definition, we have
$EX_{n,p}=\sum_{j=0}^n j\binom nj p^j(1-p)^{n-j}$.
A simpler way to calculate this expectation is to recall the probabilistic model that gave rise to $X_{n,p}$; namely that
$X_{n,p}$ is the number of successes in a sequence of $n$ independent Bernoulli trials, where for each trial the probability of success is $p$.
Thus, we define the indicator random variable $I_i$ to be equal to 1 if there was a success on the $i$th Bernoulli trial and 0 if there was a failure.
Then $I_i$ has  the following \it Bernoulli distribution\rm\ with parameter $p$: $P(I_i=1)=1-P(I_i=0)=p$.
Clearly $EI_i=p$. We can represent $X_{n,,p}$ as
$$
X_{n,p}=\sum_{i=1}^nI_i,
$$
and then by the linearity of the expectation, we immediately obtain
$$
EX_{n,p}=np.
$$
Note that this calculation does not use the fact that the $\{I_i\}_{i=1}^n$ are independent.
An easy calculation shows that the variance of  $I_i$  is equal to $p(1-p)$.
Thus, we have
$$
\sigma^2(X_{n,p})=\sum_{i=1}^n\sigma^2(I_i)=np(1-p).
$$

When we employ  indicator random variables in this article, the typical situation will be
$$
X=\sum_{i=1}^n 1_{A_i},
$$
where the events $\{A_i\}_{i=1}^n$ are not independent.
Since $E1_A=P(A)$,  we have
$$
EX=\sum_{i=1}^nP(A_i).
$$
Since the variance is given by
 $\sigma^2(X)=EX^2-(EX)^2$, we will have the variance at our disposal as soon as we calculate the second moment.
  We have
  \begin{equation}\label{variancesumindicator}
EX^2=E(\sum_{i=1}^n1_{A_i})^2=\sum_{i=1}^n P(A_i)+2\sum_{1\le i< j\le n}P(A_i\cap A_j).
\end{equation}

We conclude this section with the method of moments for proving convergence in distribution \cite{Durrett}.
Consider a random variable $X$ which possesses all of its moments.
Under the growth condition
$$
\limsup_{k\to\infty}\frac1k(EX^{2k})^{\frac1{2k}}<\infty,
$$
 the convergence in distribution
of a sequence of random variables  $\{X_n\}_{n=1}^\infty$ to $X$ follows if
\begin{equation*}\label{momentmethod}
\lim_{n\to\infty}EX_n^k=EX^k,\ \text{for all}\ k\in\mathbb{N}.
\end{equation*}

\section{Moments of the Poisson distribution, Stirling numbers of the second kind and Touchard polynomials}\label{PST}

It turns out that the Poisson distribution has an intimate connection with the \it Stirling numbers of the second kind\rm\ $S(n,k)$, $n\in\mathbb{N}, 1\le k\le n$.
Recall that  $S(n,k)$   counts the number of ways to partition a set of $n$ labelled objects into $k$ non-empty subsets
\cite{Man1,Stan1}. For convenience in formulas, one defines $S(n,0)=0$, for $n\in\mathbb{N}$ and $S(0,0)=1$.
The connection comes via the moments of the Poisson distribution. The $n$th moment, by definition,   is given by
\begin{equation}\label{nthmom}
EX_\lambda^n=\sum_{k=0}^\infty (e^{-\lambda}\frac{\lambda^k}{k!})k^n ,\ n=0,1,\cdots;
\end{equation}
however we will calculate the moments using a different tack.

It will be convenient to define the falling
factorial:
$$
(x)_j=x(x-1)\cdots (x-j+1)=j!\binom xj,\
\text{for}\ j\in\mathbb{N}\ \text{and}\ x\in\mathbb{R}.
$$
 Also,   for convenience, one defines $(x)_0=1$.
The falling factorials and the Stirling numbers of the second kind are connected by the formula
\begin{equation}\label{Stirlingfalling}
x^n=\sum_{j=0}^nS(n,j)(x)_j,\ n=0,1,\cdots.
\end{equation}
We supply a quick proof of \eqref{Stirlingfalling} \cite{Stan1}. We may assume that $n\ge1$.
It is enough to prove \eqref{Stirlingfalling} for positive integers $x$, in which case  $x^n$ is the number of functions $f:[n]\to A$, where $|A|=x$.
We now count such functions in another way. For each $j\in[x]$, consider all those functions whose range contains exactly $j$ elements.
The inverse images of these elements give a partition of $[n]$ into $j$ nonempty sets, $\{B_i\}_{i=1}^j$. We can choose  the particular $j$ elements in $\binom xj$ ways,
and we can order the sets $\{B_i\}_{i=1}^j$ in $j!$ ways. Thus there are $S(n,j)\binom xjj!=S(n,j)(x)_j$ such functions.
Therefore the number of functions $f$ as above can also be represented as $\sum_{j=0}^n S(n,j)(x)_j$.

In the case of the Poisson distribution, the falling factorial moments, $E(X_\lambda)_j$, are very simple.
We have
\begin{equation}\label{ffmom}
E(X_\lambda)_j=\sum_{k=0}^\infty (e^{-\lambda}\frac{\lambda^k}{k!})(k)_j=e^{-\lambda}\sum_{k=0}^\infty
\frac{\lambda^k}{(k-j)!}=\lambda^j,\ j\ge0.
\end{equation}
Substituting $X_\lambda$ for $x$ in \eqref{Stirlingfalling}, taking expectations on both sides of the equation and using
\eqref{ffmom} gives
\begin{equation}\label{Touchardmoment}
EX_\lambda^n=\sum_{j=0}^nS(n,j)E(X_\lambda)_j=\sum_{j=0}^nS(n,j)\lambda^j.
\end{equation}
The \it Touchard polynomials\rm\ (see, for example, \cite{Man2}), are defined
by
\begin{equation}\label{Touchard}
T_n(x)=\sum_{j=0}^nS(n,j)x^j,\ \ n=0,1,\cdots.
\end{equation}
(For $n\ge1$, we can begin the summation above from $j=1$.)
Thus, from \eqref{Touchardmoment}, the moments of a Poisson--distributed random variable can be written
 in terms of the Touchard polynomials as
 \begin{equation}\label{Touchardmomentagain}
EX_\lambda^n=T_n(\lambda),\ n=0,1,\cdots.
 \end{equation}

Recall that for each $n\in\mathbb{N}$,  the \it Bell number \rm  $B_n$  counts the total number of partitions of  the set  $[n]$;
that is,
\begin{equation}\label{Bell}
B_n=\sum_{k=0}^nS(n,k),\ n=1,2,\cdots.
\end{equation}
It then follows from \eqref{Touchardmomentagain} that
\begin{equation}\label{BellPoiss}
EX_1^n=B_n,\ n=1,2,\cdots.
\end{equation}
Thus, the $n$th moment of the Poisson distribution with parameter $\lambda=1$ is equal to
the total number of partitions of the set $[n]$.

In each of the   sections \ref{Dobalg} and \ref{Cyclecounts}, we give an application of  the connection between the Poisson distribution and Stirling numbers of the second kind
as expressed in \eqref{Touchardmomentagain}.

The calculations in \eqref{Stirlingfalling}-\eqref{Touchardmoment} hardly   lend   intuition
as to why the Poisson distribution should be  connected to the Stirling numbers of the second kind via
\eqref{Touchardmomentagain}.
The intuition (as well as an alternate proof of \eqref{Touchardmomentagain}) comes from the Poisson distribution's connection to combinatorics; namely, through
the Poisson approximation to the binomial distribution.

\noindent \it Alternate proof of \eqref{Touchardmomentagain}.\rm\ We may assume that $n\ge1$.
The $n$th moment of a random variable distributed as Bin$(N,\frac\lambda N)$ is given by
\begin{equation}\label{momentbin}
EX^n_{N,\frac\lambda N}=\sum_{j=0}^Nj^n\binom Nj(\frac\lambda N)^j(1-\frac\lambda N)^{N-j}.
\end{equation}
Since
$$
\sum_{j=M}^Nj^n\binom Nj(\frac\lambda N)^j(1-\frac\lambda N)^{N-j}\le \sum_{j=M}^\infty\frac{j^n}{j!}\lambda^j, \
0\le M\le N,
$$
we have $\lim_{M\to\infty}\sup_{N\ge M}\sum_{j=M}^Nj^n\binom Nj(\frac\lambda N)^j(1-\frac\lambda N)^{N-j}=0$.
This in conjunction with \eqref{PAB} shows that
\begin{equation}\label{PABmom}
EX_\lambda^n=\lim_{N\to\infty} EX_{N,\frac\lambda N}^n, \ n=0,1,\cdots.
\end{equation}

Writing  $X_{N,\frac\lambda N}$
as a sum of indicator random variables,
$$
X_{N,\frac\lambda N}=\sum_{j=1}^N I^{(N)}_j,
$$
where the $\{I^{(N)}_j\}_{j=1}^N$ are IID and distributed according to  the  Bernoulli distribution with parameter
$\frac\lambda N$,
we have
\begin{equation}\label{intuitive1}
X_{N,\frac\lambda N}^n=\sum_{j_1,j_2,\cdots, j_n=1}^N\prod_{i=1}^n I^{(N)}_{j_i},\ n\in\mathbb{N}.
\end{equation}
Note that
\begin{equation}\label{intuitive2}
E\prod_{i=1}^n I^{(N)}_{j_i}=(\frac\lambda N)^k,\ \text{where}\ k=|\{j_1,j_2,\cdots, j_n\}|.
\end{equation}
From here on, we  assume  that $N\ge n$. (Recall from \eqref{PABmom} that we will be letting $N\to\infty$
with $n$ fixed.)
How many $n$-tuples $(j_1,\cdots,j_n)\in[N]^n$ satisfy
$|\{j_1,j_2,\cdots, j_n\}|=k$?
We first need to choose $k$ integers from $[N]$; there are $\binom Nk$ ways to make this choice.
Having fixed $k$ such integers, we then need to set $j_i$ equal to one of these
integers, for each $i=1,\cdots, n$, and we need to do it in such a way as to ensure
that $|\{j_1,\cdots, j_n\}|=k$.
The number of ways to do this is $k!$ times the number of ways to partition a set of size $n$ into $k$ non-empty sets; namely,
$k!S(n,k)$ ways.
Using this observation along with \eqref{intuitive1} and \eqref{intuitive2}, we have
\begin{equation}\label{intuitive3}
EX_{N,\frac\lambda N}^n=\sum_{k=1}^n\binom Nkk!S(n,k)(\frac\lambda N)^k,\ n\in\mathbb{N}.
\end{equation}
From \eqref{intuitive3}, we conclude that
\begin{equation}\label{intuition4}
\lim_{N\to\infty}EX_{N,\frac\lambda N}^n=\sum_{k=0}^nS(n,k)\lambda^k=T_n(\lambda),\ n=0,1,\cdots.
\end{equation}
From \eqref{intuition4} and \eqref{PABmom}, we obtain
\eqref{Touchardmomentagain}.
\hfill $\square$

\section{Dobi\'nski's formula for Bell numbers and an algorithm for selecting a random partition of $[n]$}
\label{Dobalg}
Considering \eqref{nthmom} along with
\eqref{BellPoiss} yields the beautiful formula
\begin{equation}\label{Dob}
B_n=e^{-1}\sum_{k=0}^\infty \frac{k^n}{k!},\ n\in\mathbb{N}.
\end{equation}
It is quite striking that the sequence $\{B_n\}_{n=1}^\infty$ of combinatorial numbers can  be
represented as such an infinite series with a parameter.
This result is known as \it Dobi\'nski's formula \cite{Dob,Man1, Man2,Pitman97, Rota}.\rm\
As an interesting application of Dobi\'nski's formula, we define  a constructive algorithm that chooses
uniformly at random a partition of $[n]$; that is, it  chooses at random one of the $B_n$ partitions of $[n]$ in such a way that each partition has probability $\frac1{B_n}$ of being chosen \cite{Stam,Pitman97}.

In order to choose a partition of $[n]$, we must choose a $k\in[n]$ that determines  how many sets will be in the partition,
and then given $k$, we must choose one of the $S(n,k)$ partitions of $[n]$ into $k$ non-empty subsets.
We  utilize a balls-in-bins type of construction.
For each $n$, let $M_n$ be a positive, integer-valued random variable with distribution
\begin{equation}\label{Mndist}
P(M_n=m)=\frac1{eB_n}\frac{m^n}{m!},\ m=1,2,\cdots.
\end{equation}
Note that this is indeed a probability distribution in light of Dobi\'nski's formula.
Now take $M_n$ bins, and sequentially place $n$ balls, numbered from 1 to $n$, into these bins, uniformly at random.
Let $K_n$ denote the number of non-empty bins. (Of course, $K_n$ is random.)
 We have now constructed a random set partition of $[n]$ into $K_n$ non-empty sets.
Denote it by $\Psi_n$.
\medskip

\noindent \it Proof that the random partition $\Psi_n$ is uniformly distributed.\rm\
We need to show that $P(\Psi_n=\psi)=\frac1{B_n}$, for every partition $\psi$  of $[n]$.
Fix a partition $\psi$ of $[n]$.
Let $k$ denote the number of sets in the partition $\psi$.
%:\ $k:=|\lambda|$. \pause
If $M_n=m<k$, then it is clear from the construction that   it is not possible to have  $\Psi_n=\psi$.
That is, the conditional probability that $\Psi_n=\psi$ given that $M_n=m$ is equal to zero:
\begin{equation}\label{cond1}
P(\Psi_n=\psi|M_n=m)=0,\ \text{if}\ m<k.
\end{equation}
On the other hand,  for $m\ge k$, we claim that
\begin{equation}\label{cond2}
P(\Psi_n=\psi|M_n=m)=\frac{(m)_k}{m^n}=\frac{m(m-1)\cdots(m-k+1)}{m^n},\  m\ge k.
\end{equation}
To see this, note first that conditioned on  $M_n=m$, there are $m^n$ possible ways to place the  balls into the  bins.
Thus, to  prove \eqref{cond2}, we need to  show that of these $m^n$ ways, exactly  $m(m-1)\cdots(m-k+1)$ of them  result in constructing the partition $\psi$. In the paragraph after the next one, we will prove this fact. In order to make the proof more reader
friendly,  in the next paragraph, we  consider what must happen to the first three balls  in two specific examples.

Let $3\le k\le n$ and of course, as in \eqref{cond2}, let  $m\ge k$. Assume first, for example, that the numbers 1 and 3 appear
in the same subset of the partition $\psi$, and that the number 2 appears in a different subset of $\psi$.
Then in order to construct the partition $\psi$, there are $m$ bins to choose from for ball number 1, there are
$m-1$ bins to choose from for ball number 2, namely any bin except for the one containing ball number 1, and there is one bin to choose from for ball number 3, namely the bin
containing ball number 1.
Now assume alternatively that the numbers 1,2 and 3 all appear in different subsets in the partition $\psi$.
Then in order to construct the partition $\psi$, there are $m$ bins to choose from for ball number 1, there are
$m-1$ bins to choose from for ball number 2, and there are $m-2$ bins to choose from for ball number 3.

We now give the full proof.
Let $l_1=0$ and for $j=2,\cdots, n$, let $l_j$ equal the number of subsets in the partition
 $\psi$ that contain at least one of the numbers in $[j-1]$.  For each $j\in[n]$, consider the situation
 where balls numbered 1 through $j-1$ have been placed in bins in such a way that
 it is still possible to end up with the partition $\psi$, and then let
  $\gamma_j$ denote the number of bins
in which   ball number $j$ can be placed in    order to continue to  preserve
the possibility of ending up with the partition $\psi$.
Of course $\gamma_1=m$. For $j\ge2$, $\gamma_j$ is equal to 1 if in  $\psi$, the subset containing $j$ also contains an element from $[j-1]$, while otherwise, $\gamma_j=m-l_j$.
Furthermore, for $j\ge1$, if $\gamma_j=1$, then $l_{j+1}=l_j$, while if $\gamma_j=m-l_j$, then
$l_{j+1}=l_j+1$.
From this analysis, it follows that the number of ways to place the $n$ balls into the $m$ bins in such a way as to
result in constructing $\psi$ is $\prod_{j=1}^n\gamma_j=m(m-1)\cdots(m-j+1)$.

From \eqref{Mndist}-\eqref{cond2} we conclude that
$$
\begin{aligned}
&P(\Psi_n=\psi)=\sum_{m=1}^\infty P(M_n=m)P(\Psi_n=\psi|M_n=m)=\\
&\sum_{m=k}^\infty \frac1{eB_n}\frac{m^n}{m!}\frac{(m)_k}{m^n}= \frac1{eB_n} \sum_{m=k}^\infty \frac1{(m-k)!}=\frac1{B_n},
\end{aligned}
$$
which completes the proof that the algorithm yields a uniformly random partition of $[n]$.
\hfill $\square$

We note that in \cite{Stam} it is also proved that the number of empty bins
appearing in the construction is independent of the particular partition produced, and is distributed
according to the Poisson distribution with parameter 1.

For some more on probabilistic aspects of set partitions, see \cite{Man1,Pitman97}.
For some results concerning the asymptotic behavior of  certain partition statistics under the uniform distribution
on partitions of $[n]$, see \cite{Harper,Sach}.

\section{Chinese restaurant and Feller constructions of random permutations with applications}
\label{Chinese}
Consider the space $S_n$ of permutations of $[n]$ with the uniform probability measure
$P_n$ which gives probability $\frac1{n!}$ to each permutation $\sigma\in S_n$.
We present the so-called \it Chinese restaurant construction\rm\ which simultaneously yields a uniformly random permutation
$\Sigma_n$ in $S_n$, for all $n$ \cite{Pitman06}. Furthermore, this construction is consistent  in the sense that if one writes out
the permutation $\Sigma_n$ as the product of its cycles  and deletes the number $n$ from the cycle in which it appears, then
the resulting random permutation of $S_{n-1}$ is equal to $\Sigma_{n-1}$.

The construction works as follows. Consider a restaurant with an unlimited number of circular tables, each of which
has an unlimited number of seats. Person number 1 sits at a table.  Now for $n\ge1$, suppose that   persons
number 1 through $n$ have already been seated. Then person number $n+1$ chooses a seat as follows.
For each $j\in[n]$, with probability $\frac1{n+1}$, person number $n+1$ chooses to sit to the left of person number $j$.
Also, with probability $\frac1{n+1}$, person number $n+1$ chooses to sit at an unoccupied table.
Now for each $n\in\mathbb{N}$, the random permutation $\Sigma_n\in S_n$ is defined by $\Sigma_n(i)=j$, if
after the first $n$ persons have taken seats,
person number $j$ is seated to the left of person number $i$.

To see that $\Sigma_n$ is a uniformly random permutation in $S_n$, proceed by induction.
It is  true for $n=1$. Now assume it is true for some $n\ge1$.
Let $\sigma\in S_{n+1}$, and let $\sigma'\in S_n$ be the permutation obtained by writing
$\sigma$ as a product of cycles and deleting $n+1$. By  the inductive assumption,
$P(\Sigma_n=\sigma')=\frac1{n!}$. By the construction, $P(\Sigma_{n+1}=\sigma|\Sigma_n=\sigma')=\frac1{n+1}$
and $P(\Sigma_{n+1}=\sigma|\Sigma_n\neq\sigma')=0$.
Thus,
$$
\begin{aligned}
&P(\Sigma_{n+1}=\sigma)=P(\Sigma_n=\sigma')P(\Sigma_{n+1}=\sigma|\Sigma_n=\sigma')+\\
&P(\Sigma_n\neq\sigma')P(\Sigma_{n+1}=\sigma|\Sigma_n\neq\sigma')=
\frac1{n!}\frac1{n+1}=\frac1{(n+1)!}.
\end{aligned}
$$
The above-noted consistency is also clear.

The  construction above allows for easy analysis of certain properties of random permutations.
We illustrate this with regard to the number of cycles in a random permutation.
Let $N^{(n)}(\sigma)$ denote the number of cycles in the permutation $\sigma\in S_n$.
Then $N^{(n)}$ is  a random variable on the  probability space $(S_n,P_n)$.
Fix $n\in\mathbb{N}$. The number of cycles in the random permutation $\Sigma_n$ is the number of persons
from among persons 1 through $n$ who chose to sit at an unoccupied  table.
The probability that person $j$ chose to sit at an unoccupied table is $\frac1j$. It is clear that
whether or not person $j$ chose to sit at an empty table is independent of whether or not any of the other
persons chose to do so. In light of this, it follows that the distribution of the random variable $N_n$
is equal to the distribution of
$\sum_{i=1}^n I_i$,
where $\{I_i\}_{i=1}^n$ are independent and for each $i$, $I_i$ has the  Bernoulli distribution
with parameter $\frac1i$.
From this it is immediate that
$$
EN^{(n)}=\sum_{i=1}^n\frac1i\sim\log n\ \text{as}\ n\to\infty.
$$
Recalling that the variance of a sum of independent random variables is equal to the sum of their variances,
a simple calculation gives
$$
\text{Var}(N^{(n)})=\sum_{i=1}^n\frac{i-1}{i^2}\sim\log n\  \text{as}\ n\to\infty.
$$
Now the second moment method \eqref{secondmommeth}
yields the weak  law of large numbers for $N^{(n)}$.
\begin{theorem}\label{N^n1}
The total number of cycles $N^{(n)}$ in a uniformly random permutation from $S_n$ satisfies
\begin{equation}
\frac{N^{(n)}}{\log n}\stackrel{\text{dist}}{\to}1.
\end{equation}
\end{theorem}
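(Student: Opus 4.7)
The plan is to read off the theorem directly from the second moment method in \eqref{secondmommeth}, using the distributional identification $N^{(n)}\stackrel{\text{dist}}{=}\sum_{i=1}^n I_i$ (with the $I_i$ independent Bernoulli($\tfrac1i$)) that has already been established via the Chinese restaurant construction. Everything we need about the first and second moments is in hand: $EN^{(n)}=\sum_{i=1}^n\tfrac1i\sim\log n$ and $\sigma^2(N^{(n)})=\sum_{i=1}^n\tfrac{i-1}{i^2}\sim\log n$.

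The key inequality to check is the hypothesis $\sigma^2(W_n)=o((EW_n)^2)$ of \eqref{secondmommeth} applied to $W_n=N^{(n)}$. Since
$$
\frac{\sigma^2(N^{(n)})}{(EN^{(n)})^2}\sim\frac{\log n}{(\log n)^2}=\frac1{\log n}\to 0,
$$
the second moment method yields $\frac{N^{(n)}}{EN^{(n)}}\stackrel{\text{dist}}{\to}1$.

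To pass from $EN^{(n)}$ to $\log n$ in the denominator, I would simply write
$$
\frac{N^{(n)}}{\log n}=\frac{N^{(n)}}{EN^{(n)}}\cdot\frac{EN^{(n)}}{\log n},
$$
and note that the deterministic factor $\frac{EN^{(n)}}{\log n}\to 1$. Since convergence in distribution to a constant is equivalent to convergence in probability to that constant (as recorded at the end of Section \ref{concepts}), and convergence in probability is preserved under multiplication by deterministic sequences converging to $1$, the conclusion $\frac{N^{(n)}}{\log n}\stackrel{\text{dist}}{\to}1$ follows.

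There is no real obstacle: the Chinese restaurant construction has already done the heavy lifting by reducing $N^{(n)}$ to a sum of independent Bernoullis, after which the moment estimates are elementary harmonic-sum asymptotics and the conclusion is a one-line application of Chebyshev's inequality packaged as the second moment method. The only minor point of care is to note that the variance is of order $\log n$ (not $(\log n)^2$), which is precisely what makes the ratio $\sigma^2/(EN^{(n)})^2$ vanish.
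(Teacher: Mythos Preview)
Your proof is correct and follows essentially the same approach as the paper: use the Chinese restaurant representation $N^{(n)}\stackrel{\text{dist}}{=}\sum_{i=1}^n I_i$, compute $EN^{(n)}\sim\log n$ and $\sigma^2(N^{(n)})\sim\log n$, and apply the second moment method \eqref{secondmommeth}. You are in fact slightly more careful than the paper in spelling out the passage from the denominator $EN^{(n)}$ to $\log n$ via the deterministic factor $EN^{(n)}/\log n\to 1$.
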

And
some rather basic probabilistic machinery yields
the central limit theorem for $N^{(n)}$ \cite{Durrett}.

\begin{theorem}
The total number of cycles $N^{(n)}$ in a uniformly random permutation from $S_n$ satisfies
\begin{equation}
\frac{N^{(n)}-\log n}{\sqrt{\log n}}\stackrel{\text{dist}}{\to}N(0,1).
\end{equation}
\end{theorem}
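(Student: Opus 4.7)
The plan is to exploit the representation $N^{(n)} \stackrel{\text{dist}}{=} \sum_{i=1}^n I_i$ that was established using the Chinese restaurant construction, where the $I_i$ are independent with $I_i \sim \text{Bernoulli}(1/i)$. Since the summands are independent but not identically distributed, the classical CLT does not apply; instead, I would view them as a triangular array and appeal to the Lindeberg--Feller central limit theorem, which is the ``basic probabilistic machinery'' alluded to in the statement.

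First I would define the normalized array
$$X_{n,i} = \frac{I_i - 1/i}{\sqrt{\log n}}, \qquad 1\le i\le n,$$
so that $\sum_{i=1}^n X_{n,i} = \frac{N^{(n)} - EN^{(n)}}{\sqrt{\log n}}$. Since $EN^{(n)} = \sum_{i=1}^n 1/i = \log n + \gamma + o(1)$, the difference $\frac{EN^{(n)} - \log n}{\sqrt{\log n}}$ tends to $0$, so by Slutsky it suffices to prove that $\sum_{i=1}^n X_{n,i} \stackrel{\text{dist}}{\to} N(0,1)$.

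Second, I would verify the two hypotheses of Lindeberg--Feller. The variance condition follows at once from the calculation already carried out in the excerpt:
$$\sum_{i=1}^n \sigma^2(X_{n,i}) = \frac{1}{\log n}\sum_{i=1}^n \frac{i-1}{i^2} \longrightarrow 1.$$
The Lindeberg condition says that for every $\epsilon > 0$,
$$\sum_{i=1}^n E\bigl[X_{n,i}^2 \, \mathbf{1}_{\{|X_{n,i}|>\epsilon\}}\bigr] \longrightarrow 0.$$
Here it is essentially free: the summands are uniformly bounded, $|I_i - 1/i| \le 1$, so $|X_{n,i}| \le 1/\sqrt{\log n}$ for every $i$ and $n$. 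For any fixed $\epsilon > 0$, the indicator $\mathbf{1}_{\{|X_{n,i}|>\epsilon\}}$ vanishes identically once $n$ is large enough that $1/\sqrt{\log n} < \epsilon$, and the Lindeberg sum becomes $0$ outright.

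The main obstacle in this argument is purely notational rather than mathematical: one must take care to separate the centering by $EN^{(n)}$ (which is what the CLT naturally delivers) from the centering by $\log n$ (which appears in the theorem statement), and to handle this gap by Slutsky's lemma using $EN^{(n)} - \log n = O(1)$. Once Lindeberg--Feller is applied to the bounded triangular array above, the conclusion $\frac{N^{(n)} - \log n}{\sqrt{\log n}} \stackrel{\text{dist}}{\to} N(0,1)$ follows immediately.
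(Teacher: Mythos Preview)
Your proposal is correct and is exactly the ``rather basic probabilistic machinery'' the paper alludes to: the paper does not actually spell out a proof of this theorem, it merely cites Durrett, and the argument in Durrett is precisely the Lindeberg--Feller CLT applied to the independent bounded Bernoulli summands $I_i\sim\text{Bernoulli}(1/i)$ coming from the Chinese restaurant (or Feller) construction. Your handling of the centering discrepancy via Slutsky and the trivial verification of the Lindeberg condition from the uniform bound $|I_i-1/i|\le 1$ are exactly right.
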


We now present an  alternative construction of a random permutation in $S_n$.
The construction,
called the Feller construction \cite{Feller0,ABT}, builds the  permutation's cycles one by one.
 Begin with the number 1. Choose uniformly at random a number $j$ from $[n]$ and set it equal to $\sigma_1$:
 $\sigma_1=j$. If $j=1$, then we have constructed a complete cycle (of length one) in the permutation. If $j\neq1$,
 choose uniformly at random a number $k$ from $[n]-\{j\}$ and set it equal to $\sigma_j$: $\sigma_j=k$.
If $k=1$, then we have constructed a complete cycle (of length two). If $k\neq1$,  choose uniformly at random
a number $l$ from $[n]-\{j,k\}$ and set it equal to $\sigma_k$: $\sigma_k=l$.
If $l=1$, then we have constructed a complete cycle (of length three). If $l\neq1$,  choose uniformly at random
a number $m$ from $[n]-\{j,k,l\}$ and set it equal to $\sigma_l$: $\sigma_l=m$. Continue like this until the number 1 is finally chosen and a cycle is completed. Once a cycle is completed, start the process over again beginning, say, with the smallest number that has not been used in the completed cycle.
This number now takes on the role that the number 1 had above.
After $n$ selections of a number uniformly at random, each time  from a set whose size
has shrunk by one from the previous selection, the process ends and the construction of the permutation is completed.
Denote the permutation constructed here by $\Sigma'_n$. It is clear from the construction that $\Sigma'_n$ is uniformly
distributed on $S_n$.
As an example, let $n=6$. If we first select  uniformly at random from $[6]$ the number 4, and then select  uniformly at random from $[6]-\{4\}$ the number 1, then we have $\sigma_1=4$ and
$\sigma_4=1$, completing a cycle. Now we start with the smallest number not yet used, which is 2. If we select uniformly at random from $[6]-\{1,4\}$ the number 2, then $\sigma_2=2$, and we have completed another cycle. Now we start again
with the smallest number not yet used, which is 3. If we select uniformly at random from $[6]-\{1,2,4\}$ the number
6, then we have $\sigma_3=6$. If then we select uniformly at random from $[6]-\{1,2,4,6\}$ the number 5, then
$\sigma_6=5$. The last step of course is deterministic; we must choose $\sigma_5=3$. We have constructed the permutation
$\Sigma'_6=426135$ (where the permutation has been written in one-line notation).

From the above description, it is clear that   the probability of completing a cycle at the $j$th selection of a number is
$\frac1{n-j+1}$; indeed, at the $j$th selection, there are $n-j+1$ numbers to choose from, and only one of them completes
a cycle. Define
the indicator random variable $I_{n-j+1}$ to be one if a cycle was completed at the $j$th selection, and  zero otherwise.
Then
$I_{n-j+1}$ has the following Bernoulli distribution
with parameter $\frac1{n-j+1}$:
$P(I_{n-j+1}=1)=1-P(I_{n-j+1}=0)=\frac1{n-j+1}$. A little thought also shows that the $\{I_i\}_{i=1}^n$ are mutually independent;
knowledge of the values of certain of the $I_i$'s has no influence on the probabilities concerning the other
$I_i$'s.
Thus, as with the Chinese restaurant construction, the Feller construction leads immediately to the fact
that $N_n$ is distributed as $\sum_{i=1}^nI_i$.

For $j\in[n]$ and $\sigma\in S_n$,  let $L^{(n);j}(\sigma)$ denote the length of the  cycle  in
$\sigma$ that contains $j$.
Then $L^{(n);j}$ is a random variable on the probability space $(S_n,P_n)$.
Since $P_n$ is the uniform distribution, it is clear that the distribution
of $L^{(n);j}$  is the same for all $j\in[n]$.
Using the Feller construction, we easily obtain the following result.
\begin{proposition}\label{cyclesizeunif}
Under $P_n$, for each $j\in[n]$, the random variable $L^{(n);j}$ is uniformly distributed
on $[n]$.
\end{proposition}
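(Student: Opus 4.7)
The plan is to reduce to the case $j=1$ and then apply the Feller construction directly. By the invariance of the uniform measure $P_n$ under relabeling of $[n]$, the distribution of $L^{(n);j}$ does not depend on $j$, so it suffices to show that $L^{(n);1}$ is uniform on $[n]$. (Alternatively, I would note that the Feller construction can be started with any fixed element as the ``seed'' instead of $1$, so the same argument works for any $j$ without an explicit symmetry invocation.)

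Next I would run the Feller construction with $1$ as the starting number. The cycle containing $1$ is exactly the first cycle to be closed in the construction, and it closes at the $k$th selection precisely when the first $k-1$ selections avoid the number $1$ and the $k$th selection produces it. By the description of the construction, the $i$th selection is uniform over a set of size $n-i+1$ that still contains $1$ (as long as no cycle has closed yet), so the conditional probability of avoiding $1$ at step $i$ is $(n-i)/(n-i+1)$, and the conditional probability of choosing $1$ at step $k$ is $1/(n-k+1)$. These conditional probabilities multiply because the successive choices are done independently from shrinking sets.

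The main computation is then
\begin{equation*}
P(L^{(n);1}=k)=\left(\prod_{i=1}^{k-1}\frac{n-i}{n-i+1}\right)\frac{1}{n-k+1},\ k\in[n],
\end{equation*}
and the product telescopes to $(n-k+1)/n$, giving $P(L^{(n);1}=k)=1/n$ uniformly in $k\in[n]$.

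There is no real obstacle here; the only subtlety is ensuring that the Feller construction is properly set up so that ``the cycle containing $1$'' is the first cycle produced, which is automatic because the construction was described as starting with $1$. The elegance of the result is that the awkward-looking direct enumeration (a cycle of length $k$ containing a prescribed element corresponds to $(n-1)(n-2)\cdots(n-k+1)(n-k)!$ permutations out of $n!$, which also gives $1/n$) is replaced by a clean probabilistic telescoping, which is the main virtue of invoking the Feller construction here.
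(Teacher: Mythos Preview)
Your proof is correct and follows essentially the same approach as the paper: reduce to $j=1$ by symmetry, then use the Feller construction to compute $P(L^{(n);1}=k)$ as the telescoping product $\frac{n-1}{n}\cdots\frac{n-k+1}{n-k+2}\cdot\frac{1}{n-k+1}=\frac{1}{n}$. The paper's proof is in fact slightly terser than yours, but the argument is identical.
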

\begin{proof}
As noted above, it suffices to consider $j=1$. From the Feller construction, the probability
that for the uniformly random permutation  $\Sigma'_n$ in $S_n$, the cycle  containing 1 is of size $j$ is given by
$\frac1n$, for $j=1$, and is given by
$\frac{n-1}n\frac{n-2}{n-1}\cdots \frac {n-j+1}{n-j+2}\frac1{n-j+1}=\frac1n$, for $j=2,\cdots, n$.
\end{proof}

Now for each $n\in\mathbb{N}$ and $j\in[n]$,  let $A^{(n)}_j=A^{(n)}_j(\Sigma'_n)$ denote the length of the $j$th
cycle constructed via the Feller construction, with $A^{(n)}_j=0$ if fewer than $j$ cycles were constructed.
We  refer to $\{A^{(n)}_j\}_{j=1}^n$ as the \it ordered cycles.\rm\
Note that whereas $L^{(n);j}$ was defined on $(S_n,P_n)$, independent of any particular construction,
$A^{(n)}_j$ is defined in terms of the Feller construction.
Let $\{U_n\}_{n=1}^\infty$ be a sequence of IID random variables with the uniform
distribution on $[0,1]$. Let $X_1=U_1$ and $X_n=(1-U_1)\cdots(1-U_{n-1})U_n$, for $n\ge2$.
The $\{X_n\}_{n=1}^\infty$ can be understood in terms of  the uniform \it stick breaking\rm\ model: take a stick of length one and break it at a uniformly random point.
Let the length of the left hand piece be $X_1$. Take the remaining piece of stick, of length $1-X_1$, and break it at a uniformly random point.
Let the length of the left hand piece be $X_2$, etc.
In light of Proposition \ref{cyclesizeunif} and the Feller construction, the following theorem is almost immediate.
\begin{theorem}\label{orderedcycles}
For any $k\in\mathbb{N}$,
 the random vector\newline $\frac1n(A^{(n)}_1,\cdots, A^{(n)}_k)$
converges in distribution to the random vector $(X_1,\cdots, X_k)$, where
$$
X_j=(1-U_1)\cdots(1-U_{j-1})U_j,\ j\in \mathbb{N},
$$
and
$\{U_n\}_{n=1}^\infty$ are IID random variables distributed according to the uniform distribution on
$[0,1]$.
\end{theorem}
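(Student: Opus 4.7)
The plan is to reduce the theorem to a joint convergence statement for certain normalized ratios and then apply the continuous mapping theorem. The starting point is a ``self-similarity'' of the Feller construction: after completion of the first cycle, of length $A^{(n)}_1$, the construction of the remaining cycles on the $n-A^{(n)}_1$ unused labels is, in distribution (after relabeling), a fresh Feller construction on a set of that size. Iterating this observation and invoking Proposition \ref{cyclesizeunif}, one obtains that for any $a_1,\dots,a_{j-1}$ with $s:=a_1+\cdots+a_{j-1}<n$, the conditional distribution of $A^{(n)}_j$ given $A^{(n)}_1=a_1,\dots,A^{(n)}_{j-1}=a_{j-1}$ is uniform on $\{1,2,\dots,n-s\}$.

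Set $N^{(n)}_j := n - (A^{(n)}_1+\cdots+A^{(n)}_{j-1})$ and, on the event $\{N^{(n)}_j>0\}$, let $U^{(n)}_j := A^{(n)}_j/N^{(n)}_j$; the complementary event has probability tending to $0$ for each $j\le k$, by Theorem \ref{N^n1} (the number of cycles grows like $\log n$), and can therefore be ignored. A short algebraic check gives the identity
$$\frac{A^{(n)}_j}{n} = (1-U^{(n)}_1)(1-U^{(n)}_2)\cdots(1-U^{(n)}_{j-1})\,U^{(n)}_j, \qquad j=1,\dots,k,$$
which is precisely the limiting functional form in the theorem. It therefore suffices to prove $(U^{(n)}_1,\dots,U^{(n)}_k)\stackrel{\text{dist}}{\to}(U_1,\dots,U_k)$ with the $U_j$ i.i.d.\ Uniform$[0,1]$, and then to apply the continuous mapping theorem to the map $(u_1,\dots,u_k)\mapsto\bigl(u_1,(1-u_1)u_2,\dots,(1-u_1)\cdots(1-u_{k-1})u_k\bigr)$.

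The joint convergence is established by induction on $k$. The base case $k=1$ is immediate: $U^{(n)}_1$ is uniform on $\{1/n,\dots,1\}$. For the induction step, fix bounded continuous test functions $g_1,\dots,g_k$ on $[0,1]$ and condition on $\mathcal{F}_{k-1}:=\sigma(A^{(n)}_1,\dots,A^{(n)}_{k-1})$; the conditional distribution of $U^{(n)}_k$ identified above yields
$$E\prod_{j=1}^k g_j(U^{(n)}_j) \;=\; E\Bigl(\prod_{j=1}^{k-1}g_j(U^{(n)}_j)\cdot\frac{1}{N^{(n)}_k}\sum_{i=1}^{N^{(n)}_k}g_k\bigl(i/N^{(n)}_k\bigr)\Bigr).$$
The inductive hypothesis, fed through the algebraic identity above, shows that $N^{(n)}_k/n$ converges in distribution to $(1-U_1)\cdots(1-U_{k-1})$, which is almost surely positive; hence $N^{(n)}_k\to\infty$ in probability and the Riemann sum tends in probability to $\int_0^1 g_k(u)\,du$. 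Slutsky's theorem together with bounded convergence and the inductive hypothesis then yield the factorization $E\prod_j g_j(U^{(n)}_j)\to\prod_j\int_0^1 g_j$, completing the induction.

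The main obstacle is this last step---marrying the inductive joint convergence of $(U^{(n)}_1,\dots,U^{(n)}_{k-1})$ with the Riemann-sum convergence of the conditional expectation of $g_k(U^{(n)}_k)$ in the presence of a random sample size $N^{(n)}_k$. The resolution rests on the fact that $(1-U_1)\cdots(1-U_{k-1})$ is strictly positive almost surely, so $N^{(n)}_k$ diverges in probability and the discretization error in the Riemann sum vanishes uniformly on events of probability approaching $1$.
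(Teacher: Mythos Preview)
Your proof is correct and follows precisely the approach the paper has in mind: the paper states only that the theorem is ``almost immediate'' from Proposition~\ref{cyclesizeunif} and the Feller construction, without spelling out any details, and what you have written is a careful, rigorous execution of exactly that idea---exploiting the self-similarity of the Feller construction to identify the conditional law of $A^{(n)}_j$ as uniform on the remaining labels, rewriting $A^{(n)}_j/n$ in stick-breaking form via the ratios $U^{(n)}_j$, and passing to the limit. Your invocation of Theorem~\ref{N^n1} to handle the event $\{N^{(n)}_j=0\}$ is harmless but slightly redundant, since your own inductive argument already yields $N^{(n)}_k/n\stackrel{\text{dist}}{\to}(1-U_1)\cdots(1-U_{k-1})>0$ and hence $P(N^{(n)}_k>0)\to1$.
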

The distribution of $\{X_n\}_{n=1}^\infty$ is known as the GEM distribution \cite{ABT}.
It will be mentioned  again in section \ref{DickmanBuch}.

\section{Convergence in distribution of
  cycle counts of fixed length in random permutations to Poisson distributions}\label{Cyclecounts}
In this section we present a
second application of the connection between the Poisson distribution and Stirling numbers of the second kind
as expressed in \eqref{Touchardmomentagain}.
As in the previous section, we consider the space $S_n$ of permutations of $[n]$ with the uniform probability measure
$P_n$.
For each $m\in[n]$, let $C^{(n)}_m:S_n\to\mathbb{N}$ count the number of
$m$-cycles in a permutation; that is, $C^{(n)}_m(\sigma)$ equals the number of $m$-cycles in $\sigma\in S_n$.
Then
 $C_m^{(n)}$ is a random variable on the probability space $(S_n,P_n)$.
We use the standard probability  notation $P_n(C_m^{(n)}=k)$:
$$
P_n(C_m^{(n)}=k)=P_n(\{\sigma\in S_n: C_m^{(n)}(\sigma)=k\})=\frac{|\{\sigma\in S_n: C_m^{(n)}(\sigma)=k\}|}{n!}.
$$
For fixed $n$, the  distribution of $C_m^{(n)}$ is complicated, but we will prove that as $n\to\infty$, the distribution
of $C_m^{(n)}$ converges in distribution to the  Poisson$(\frac1m)$ distribution.
That is,
\begin{equation}\label{cyclespoisson}
\lim_{n\to\infty}P_n(C_m^{(n)}=k)=e^{-\frac1m}\frac{(\frac1m)^k}{k!},\ k=0,1,\cdots;\ m=1,2,\cdots.
\end{equation}
In fact, a  stronger result holds; namely that for any $m\in\mathbb{N}$, the distribution of the random vector $(C_1^{(n)},\cdots, C_m^{(n)})$ converges in distribution to a random vector, call it
$(X_1,\cdots, X_m)$, with independent components, where $X_i$ is distributed according to the Poisson distribution with parameter $\frac1i$.
In the theorem below,
the product on the right hand side  expresses the independence of the components $\{X_i\}_{i=1}^m$.
\begin{theorem}\label{cyclespoissonmulti}
Let $C_i^{(n)}$ denote the total number of cycles of length $i$ in  a uniformly random permutation from $S_n$.
Then
\begin{equation}\label{cyclespoissonindep}
\begin{aligned}
&\lim_{n\to\infty}P_n(C_1^{(n)}=k_1,\cdots, C_m^{(n)}=k_m)=\prod_{i=1}^me^{-\frac1{i}}\frac{(\frac1i)^{k_i}}{k_i!},\\
&\text{for}\  k_i=0,1,\cdots,\ 1\le i\le m;\ m=1,2,\cdots.
\end{aligned}
\end{equation}
\end{theorem}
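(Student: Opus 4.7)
The plan is to compute the joint falling factorial moments of $(C_1^{(n)},\dots,C_m^{(n)})$ and then invoke the method of moments, exactly paralleling the Poisson--Stirling machinery already developed in Section \ref{PST}. For each cyclically ordered subset $c$ of $[n]$ of size $i$, let $1_c$ be the indicator that $c$ appears as a cycle of the random permutation $\sigma\in S_n$, so $C_i^{(n)}=\sum_c 1_c$. The central combinatorial identity to establish is
$$E_n\bigl[(C_1^{(n)})_{k_1}(C_2^{(n)})_{k_2}\cdots(C_m^{(n)})_{k_m}\bigr]=\prod_{i=1}^m \frac{1}{i^{k_i}},$$
valid whenever $N:=\sum_{i=1}^m i k_i\le n$. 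The left-hand side counts ordered tuples of mutually distinct $i$-cycles of each prescribed length, weighted by the probability that all of them simultaneously appear as cycles of $\sigma$; because cycles of distinct lengths are automatically distinct, ``mutually distinct'' is equivalent here to ``element-disjoint.''

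For any specified element-disjoint configuration occupying $N$ elements, the probability under $P_n$ that exactly these cycles appear is $(n-N)!/n!$, since the remaining $n-N$ elements can be permuted arbitrarily. A straightforward ``place one cycle at a time'' count shows the number of such ordered configurations is $\frac{n!}{(n-N)!}\prod_{i=1}^m i^{-k_i}$: each $i$-cycle, when placed, contributes $(i-1)!\binom{\text{remaining}}{i}=\frac{1}{i}\cdot\frac{(\text{rem})!}{(\text{rem}-i)!}$, and the binomial factors telescope. Multiplying these two quantities gives the displayed identity. By \eqref{ffmom} and independence, the right-hand side $\prod_{i=1}^m (1/i)^{k_i}$ is precisely the joint factorial moment $\prod_{i=1}^m E(X_i)_{k_i}$ of independent $X_i\sim\mathrm{Poisson}(1/i)$.

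Next, I apply \eqref{Stirlingfalling} coordinatewise to pass from factorial moments to ordinary moments:
$$\prod_{i=1}^m (C_i^{(n)})^{n_i}=\sum_{j_1=0}^{n_1}\cdots\sum_{j_m=0}^{n_m}\Bigl(\prod_{i=1}^m S(n_i,j_i)\Bigr)\prod_{i=1}^m (C_i^{(n)})_{j_i}.$$
Taking $E_n$ of both sides, substituting the factorial moment computation, and letting $n\to\infty$ (the sum is finite) yields
$$\lim_{n\to\infty}E_n\prod_{i=1}^m (C_i^{(n)})^{n_i}=\prod_{i=1}^m\sum_{j_i=0}^{n_i}S(n_i,j_i)(1/i)^{j_i}=\prod_{i=1}^m T_{n_i}(1/i)=\prod_{i=1}^m EX_i^{n_i},$$
using \eqref{Touchardmomentagain}, and the product form reflects the independence of the limit.

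Finally, I invoke the method of moments. Each Poisson$(1/i)$ marginal satisfies the growth condition of Section \ref{tools}, since $EX_i^{2k}=T_{2k}(1/i)\le B_{2k}$ and $B_{2k}^{1/(2k)}/k$ is bounded (indeed tends to $0$). Convergence of all joint moments to those of a vector with independent Poisson components then yields joint convergence in distribution, either through a multivariate extension of the method of moments or, cleanly, by the Cram\'er--Wold device applied to every linear combination $\sum_i t_i C_i^{(n)}$. The main obstacle is the careful bookkeeping in step one --- correctly enumerating ordered element-disjoint cycle configurations so that the $(n-N)!/n!$ probability of occurrence cancels precisely against the $n!/(n-N)!$ factor in the count, leaving the clean answer $\prod i^{-k_i}$; everything downstream is mechanical application of the Stirling--Touchard identities already at hand.
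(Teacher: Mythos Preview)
Your proof is correct and follows essentially the same strategy as the paper's: a moment computation via indicator random variables followed by the method of moments, with the Touchard polynomial $T_{n_i}(1/i)$ emerging as the limiting mixed moment. The only organizational difference is that you compute the joint \emph{factorial} moments first (obtaining the clean $\prod_i i^{-k_i}$) and then convert to ordinary moments via \eqref{Stirlingfalling}, whereas the paper's Proposition~\ref{likepoissonmoments} expands the ordinary moment directly and lets the Stirling numbers $S(k,l)$ appear in the counting of how many of the $D_j$'s coincide; the two computations are dual rearrangements of the same identity, and your version is arguably the tidier packaging for the multivariate case.
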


The proof we give of \eqref{cyclespoisson} readily generalizes to a proof Theorem \ref{cyclespoissonmulti}, but the notation
is more cumbersome.
The combinatorial part of our proof is contained in the following proposition.
\begin{proposition}\label{likepoissonmoments}
For $n\ge mk$,
\begin{equation*}\label{finaltouch}
\begin{aligned}
&E_n(C^{(n)}_m)^k=T_k(\frac1m).
\end{aligned}
\end{equation*}
\end{proposition}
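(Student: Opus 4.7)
\medskip
\noindent\emph{Proof proposal.} The plan is to compute the falling factorial moments $E_n(C^{(n)}_m)_k$ directly by a counting argument, and then convert to ordinary moments using the identity \eqref{Stirlingfalling}, which will automatically produce the Touchard polynomial $T_k(1/m)$. This is the natural combinatorial mirror of the Poisson calculation in \eqref{ffmom}-\eqref{Touchardmoment}: we expect $E_n(C^{(n)}_m)_k = (1/m)^k$, matching the falling factorial moments of $\text{Poisson}(1/m)$, and this will give the conclusion.

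First I would represent $C^{(n)}_m$ as a sum of indicator random variables. For each ``abstract $m$-cycle'' $C$ on $[n]$, i.e.\ each cyclic arrangement of $m$ distinct elements of $[n]$, let $1_C$ be the indicator that $C$ appears as one of the cycles of the random permutation. The total number of such $C$ is $\binom{n}{m}(m-1)! = n!/(m(n-m)!)$, and by fixing the images on the $m$ elements of $C$ and letting the remaining $n-m$ elements be permuted arbitrarily, the number of permutations containing $C$ as a cycle is $(n-m)!$, so $P_n(1_C=1) = (n-m)!/n!$. Thus
\begin{equation*}
C^{(n)}_m = \sum_C 1_C.
\end{equation*}

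Next, I would compute $E_n(C^{(n)}_m)_k$ by interpreting the falling factorial combinatorially: $(C^{(n)}_m)_k$ counts the number of ordered $k$-tuples $(C_1,\dots,C_k)$ of \emph{distinct} abstract $m$-cycles that all appear in the random permutation. Since the cycles of a permutation are pairwise disjoint as subsets of $[n]$, the product $1_{C_1}\cdots 1_{C_k}$ vanishes unless the $C_i$ are supported on pairwise disjoint $m$-element subsets of $[n]$; here is where the hypothesis $n \ge mk$ enters, ensuring that such configurations exist. For any such admissible tuple, the probability that all $k$ cycles appear is $(n-mk)!/n!$, obtained by freely permuting the remaining $n-mk$ elements. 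The number of admissible ordered tuples is $\frac{n!}{m^k(n-mk)!}$: arrange $mk$ elements of $[n]$ in a sequence in $n!/(n-mk)!$ ways, cut the sequence into $k$ consecutive blocks of length $m$, and divide by $m^k$ to account for each cycle having $m$ equivalent cyclic starting points. Multiplying,
\begin{equation*}
E_n(C^{(n)}_m)_k = \frac{n!}{m^k(n-mk)!}\cdot\frac{(n-mk)!}{n!} = \frac{1}{m^k}.
\end{equation*}

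Finally, substituting $x = C^{(n)}_m$ in \eqref{Stirlingfalling} and taking expectations gives
\begin{equation*}
E_n(C^{(n)}_m)^k = \sum_{j=0}^k S(k,j)\, E_n(C^{(n)}_m)_j = \sum_{j=0}^k S(k,j)\left(\tfrac{1}{m}\right)^j = T_k\!\left(\tfrac{1}{m}\right),
\end{equation*}
where the hypothesis $n\ge mk$ guarantees $n\ge mj$ for each $j\le k$, so our formula for the factorial moments applies termwise. The main point to be careful about is the counting of admissible ordered $k$-tuples of disjoint $m$-cycles; the cleanest justification is the sequence-cutting argument above, which automatically builds in both the choice of $k$ disjoint $m$-subsets and their independent cyclic structures.
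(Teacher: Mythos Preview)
Your proof is correct, but it takes a genuinely different route from the paper's. The paper indexes indicators by $m$-element \emph{subsets} $D\subset[n]$ (with $1_D=1$ iff $\sigma$ has an $m$-cycle on $D$) and computes the ordinary $k$th moment directly by expanding $(\sum_D 1_D)^k$. The Stirling numbers $S(k,l)$ then appear combinatorially: in the expansion one must count, for each $l$, how many $k$-tuples $(D_1,\dots,D_k)$ collapse to a given collection of $l$ pairwise disjoint $m$-sets, and that count is $l!\,S(k,l)$. Your approach instead indexes by abstract $m$-\emph{cycles}, computes the falling factorial moments $E_n(C^{(n)}_m)_j=(1/m)^j$ by a clean disjointness argument, and only then invokes \eqref{Stirlingfalling} to pass to ordinary moments, so that the Stirling numbers enter through the algebraic conversion rather than through a direct count. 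Your route is a bit more conceptual---it makes transparent the parallel with \eqref{ffmom}--\eqref{Touchardmoment} for the Poisson distribution and explains immediately \emph{why} $C^{(n)}_m$ should behave like $\text{Poisson}(1/m)$ in the moment sense---while the paper's route is more self-contained, producing $T_k(1/m)$ in one shot without appealing back to \eqref{Stirlingfalling}.
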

The proof of \eqref{cyclespoisson} follows from Proposition \ref{likepoissonmoments},
\eqref{Touchardmomentagain} and the method of moments described in the final paragraph of section \ref{tools}.
To see that the condition  required to employ the method of moments indeed holds  in the case
at hand,  see, for example, the explanation in the first two paragraphs of the proof of Theorem C in \cite{Pin17}.
\medskip

We note that there are  ways other than the method of moments  to prove \eqref{cyclespoisson} and Theorem \ref{cyclespoissonmulti}. See for example
\cite{ABT} which uses \eqref{cauchy} in Chapter \ref{DickmanBuch} and the method of inclusion-exclusion, or \cite{NP} for a completely different approach.
A proof  of  \eqref{cyclespoissonindep} in \cite{Wilf} (and also in \cite{Pin14}) uses generating functions in a rather involved way.
The first proof of  \eqref{cyclespoisson} seems to be in \cite{Gon}  and the proof
of \eqref{cyclespoissonindep} may go back to \cite{Kolc}.
\medskip

\noindent \it Proof of Proposition \ref{likepoissonmoments}.\rm\
Assume that $n\ge mk$.
For $D\subset[n]$ with $|D|=m$,
let $1_D(\sigma)$ be the indicator random variable equal to 1 or  0 according to whether or not $\sigma\in S_n$ possesses an $m$-cycle consisting of
the elements of $D$. Then we have
\begin{equation}\label{Cnrep}
C^{(n)}_m(\sigma)=\sum_{\stackrel{D\subset[n]}{|D|=m}}1_D(\sigma),
\end{equation}
and
\begin{equation}\label{Cnmkthmoment}
E_n(C^{(n)}_m)^k=\sum_{\{(D_1,\cdots, D_k)\subset[n]^k:|D_j|=m,j\in[k]\}}E_n\prod_{j=1}^k1_{D_j}.
\end{equation}

Now $E_n\prod_{j=1}^k1_{D_j}\neq0$ if and only if for some $l\in[k]$, there exist
disjoint sets $\{A_i\}_{i=1}^l$ such that $\{D_j\}_{j=1}^k=\{A_i\}_{i=1}^l$.
If this is the case, then
\begin{equation}\label{ldistinct}
E_n\prod_{j=1}^k1_{D_j}=E_n\prod_{j=1}^l1_{A_j}=\frac{(n-lm)!((m-1)!)^l}{n!}.
\end{equation}
(Here we have used the assumption that $n\ge mk$.)
The number of ways to construct a collection of  $l$ disjoint sets  $\{A_i\}_{i=1}^l$, each of which consists of $m$ elements from $[n]$,
is
$\frac{n!}{(m!)^l(n-lm)!\thinspace l!}$.
Given the $\{A_i\}_{i=1}^l$, the number of ways to choose the sets $\{D_j\}_{j=1}^k$ so that $\{D_j\}_{j=1}^k=\{A_i\}_{i=1}^l$ is
equal to the Stirling number of the second kind $S(k,l)$, the number of ways to partition a set of size $k$ into
$l$ nonempty parts, multiplied by $l!$, since the labeling must be taken into account.
From these facts along with  \eqref{Cnmkthmoment} and \eqref{ldistinct},
we conclude that for $n\ge mk$,
\begin{equation*}\label{finaltouch}
\begin{aligned}
&E_n(C^{(n)}_m)^k=\\
&\sum_{l=1}^k\big(\frac{(n-lm)!((m-1)!)^l}{n!}\big)\big(\frac{n!}{(m!)^l(n-lm)!}\big)S(k,l)=\\
&\sum_{l=1}^k\frac1{m^l}S(k,l)=T_k(\frac1m).
\end{aligned}
\end{equation*}
\hfill $\square$

\section{Limiting behavior of the lengths of the largest and smallest cycles in random permutations and the connection to the  Dickman and Buchstab functions}\label{DickmanBuch}

Theorem \ref{cyclespoissonmulti} deals with the limiting distributions of the number of cycles of \it fixed\rm\ lengths. We now consider
 the limiting behavior of the \it lengths\rm\ of the  largest cycles or of the smallest cycles. We begin by considering the largest cycles.

One way to analyze this begins with the formula of Cauchy which counts the number of permutations
with given cycle numbers. Let $\{c_j\}_{j=1}^n$ be nonnegative integers. Cauchy's formula \cite{ABT} states that the number of permutations $\sigma\in S_n$ that
satisfy $C^{(n)}_j(\sigma)=c_j$, for all $j\in[n]$,
is equal to
$n!\prod_{j=1}^n(\frac1j)^{c_j}\frac1{c_j!}$, if $\sum_{j=1}^njc_j=n$, and of course is equal to zero otherwise. Therefore, for a random permutation of $S_n$, we have
\begin{equation}\label{cauchy}
P_n(C^{(n)}_1=c_1,\cdots, C^{(n)}_n=c_n)=
%1_{\{\sum_{j=1}^njc_j=n\}}
\prod_{j=1}^n(\frac1j)^{c_j}\frac1{c_j!}, \ \text{if}\ \sum_{j=1}^njc_j=n.
\end{equation}
The product on the right hand side above reminds one of  Poisson distributions.
Let $\{Z_j\}_{j=1}^\infty$ be a sequence of independent  random variables, with $Z_j\sim\text{Poisson}(\frac1j)$, and define
$T_{[n]}=\sum_{j=1}^njZ_j$, for $n\in\mathbb{N}$.
We have
\begin{equation}\label{cauchyagain}
\begin{aligned}
&P(Z_1=c_1,\cdots, Z_n=c_n,T_{[n]}=n)=\prod_{j=1}^ne^{-\frac1j}(\frac1j)^{c_j}\frac1{c_j!}=\\
&e^{-\sum_{j=1}^n\frac1j}\prod_{j=1}^n(\frac1j)^{c_j}\frac1{c_j!},
\  \text{if}\ \sum_{j=1}^njc_j=n,
\end{aligned}
\end{equation}
since the requirement $T_{[n]}=n$ is automatically fulfilled if $\sum_{j=1}^njc_j=n$.
Summing \eqref{cauchy} over all $\{c_j\}_{j=1}^n$ which satisfy $\sum_{j=1}^njc_j=n$ gives
\begin{equation}\label{normalization}
\sum_{\substack{(c_1,\cdots, c_n):\\\sum_{j=1}^njc_j=n}}\prod_{j=1}^n(\frac1j)^{c_j}\frac1{c_j!}=1.
\end{equation}
Summing \eqref{cauchyagain} over this same set of $\{c_j\}_{j=1}^n$, and using \eqref{normalization} gives
\begin{equation}\label{T0n}
P(T_{[n]}=n)=e^{-\sum_{j=1}^n\frac1j}.
\end{equation}
From \eqref{cauchy}, \eqref{cauchyagain} and \eqref{T0n}, we conclude that
\begin{equation}\label{conditionrelation}
P_n(C^{(n)}_1=c_1,\cdots, C^{(n)}_n=c_n)=P(Z_1=c_1,\cdots, Z_n=c_n|T_{[n]}=n).
\end{equation}

The representation in \eqref{conditionrelation} of the distribution of the lengths of the cycles in terms of a conditioned distribution of independent random variables can be exploited.
We demonstrate this by sketching the method in the case of the longest cycle \cite{ABT}. Let $L_j^{(n)}(\sigma)$ denote the length of the $j$th longest cycle in $\sigma\in S_n$, $j=1,2,\cdots$.
We will consider only $L_1^{(n)}$, but one can work similarly with the random vector $(L_1^{(n)},\cdots, L_j^{(n)})$.
%Similar methods can also be used to treat the shortest cycles.
The event $\{L_1^{(n)}\le m\}$ can be written as $\{C_{m+1}^{(n)}=C_{m+2}^{(n)}=\cdots= C_n^{(n)}=0\}$.
Using \eqref{conditionrelation} this gives
\begin{equation}\label{exploitindep}
\begin{aligned}
&P_n(L^{(n)}_1\le m)=P(Z_{m+1}=\cdots =Z_n=0|T_{[n]}=n)=\\
&\frac{P(Z_{m+1}=\cdots =Z_n=0, T_{[n]}=n)}{P(T_{[n]}=n)}=\frac{P(Z_{m+1}=\cdots =Z_n=0, T_{[m]}=n)}{P(T_{[n]}=n)}=\\
&\frac{P(Z_{m+1}=\cdots =Z_n=0)P(T_{[m]}=n)}{P(T_{[n]}=n)}=\frac{P(T_{[m]}=n)}{P(T_{[n]}=n)}\exp(-\sum_{j=m+1}^n\frac1j),
\end{aligned}
\end{equation}
where we have used the independence of $T_{[m]}$ and $\{Z_j\}_{j=m+1}^n$ in the next to the last equality.

It is known that the random variable
$\frac1nT_{[n]}=\frac1n\sum_{j=1}^n jZ_j$   converges in distribution as $n\to\infty$ to the so-called \it Dickman distribution\rm\ \cite{ABT,Pin18}.
This distribution, supported on $[0,\infty)$, has density
$e^{-\gamma}\rho(x)$, where $\gamma$ is Euler's constant and
$\rho$ is the \it Dickman function\rm, defined as   the unique continuous function satisfying the differential-delay equation
\begin{equation}\label{Dickmanequ}
\begin{aligned}
&\rho(x)=1,\ x\in(0,1];\\
&x\rho'(x)+\rho(x-1)=0, \ x>1.
\end{aligned}
\end{equation}
(The proof that $\int_0^\infty\rho(x)dx=e^\gamma$ is not immediate \cite{MV}, \cite{Tene}.)
For use below, we note that
\begin{equation}\label{integralident}
\int_{x-1}^x\rho(y)dy=x\rho(x), \ x\ge1.
\end{equation}
This identity  follows by defining $H(x)=\int_{x-1}^x\rho(y)dy$ and using
\eqref{Dickmanequ} to obtain $H'(x)=(x\rho(x))'$ and $H(1)=1$.
For more on the interesting topic of  convergence in distribution to the Dickman distribution, see \cite{Pin18} and references therein.
In \eqref{exploitindep}, replace $m$ by $m_n$ and assume that $m_n\sim nx$ as $n\to\infty$, where $x\in(0,1)$.
Then the quotient on the right hand side of \eqref{exploitindep} can be written as
\begin{equation}\label{quotientprob}
\frac{P(T_{[m_n]}=n)}{P(T_{[n]}=n)}=\frac{P(\frac1{m_n}T_{[m_n]}=\frac n{m_n})}{P(\frac1nT_{[n]}=1)}.
\end{equation}
On the right hand side above, both $\frac1{m_n}T_{[m_n]}$ and $\frac1nT_{[n]}$ converge in distribution to the Dickman distribution, and $\lim_{n\to\infty}\frac n{m_n}=\frac1x$.
Convergence in distribution to the Dickman distribution yields
\begin{equation}\label{weakdickman}
\lim_{N\to\infty}P(\frac1NT_{[N]}\in [a,b])=
\int_a^be^{-\gamma}\rho(x)dx, \ 0\le a<b<\infty,
\end{equation}
 but it gives no information  about the point probabilities
on the right hand side of \eqref{quotientprob}.
However, applying a technique called \it size-biasing\rm, which has particularly nice properties in the case of sums of independent Poisson random variables, one can express the point probabilities of $T_{[N]}$ in terms of interval probabilities:
\begin{equation}\label{pointinterval}
P(T_{[N]}=k)=\frac1k\sum_{l=1}^NP(T_{[N]}=k-l),\ k=1,2,\cdots.
\end{equation}
Using  \eqref{weakdickman} and \eqref{pointinterval} with $N=n$ and with $N=m_n$,  along with \eqref{quotientprob} and \eqref{integralident}, we obtain
\begin{equation}\label{almostfinaldickman}
\lim_{n\to\infty}\frac{P(T_{[m_n]}=n)}{P(T_{[n]}=n)}=\int_{\frac1x-1}^\frac1x\rho(y)dy=\frac1x\rho(\frac1x), \ \text{if}\ \lim_{n\to\infty}\frac{m_n}n=x\in(0,1).
\end{equation}
Substituting $m_m\sim nx$ in \eqref{exploitindep}, noting that
$\lim_{n\to\infty}\exp(-\sum_{j=m_n+1}^n\frac1j)=x$, and using \eqref{almostfinaldickman}, we arrive at the following result.
 \begin{theorem}\label{largestcycle}
Let $L^{(n)}_1$ denote the length of the largest cycle in a uniformly random permutation from $S_n$.
Then $\frac1nL^{(n)}_1$ converges in distribution to the distribution whose distribution function is given by
$\rho(\frac1x),\ x\in[0,1]$. That is,
$$
\lim_{n\to\infty}P_n(\frac1nL^{(n)}_1\le x)=\rho(\frac1x),\ x\in[0,1].
$$
\end{theorem}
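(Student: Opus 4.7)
The plan is to assemble three ingredients already in place: the factorisation \eqref{exploitindep}, the limit \eqref{almostfinaldickman}, and the classical harmonic asymptotic. Fix $x\in(0,1)$ and set $m_n=\lfloor nx\rfloor$, so that $m_n/n\to x$. Since $L_1^{(n)}$ is integer-valued, $\{\tfrac{1}{n}L_1^{(n)}\le x\}=\{L_1^{(n)}\le m_n\}$, and \eqref{exploitindep} gives
\[
P_n\Big(\tfrac{1}{n}L_1^{(n)}\le x\Big)=\frac{P(T_{[m_n]}=n)}{P(T_{[n]}=n)}\,\exp\Big(-\sum_{j=m_n+1}^{n}\tfrac{1}{j}\Big).
\]

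The first step is to identify the limit of the exponential factor. From the standard asymptotic $\sum_{j=1}^N\tfrac{1}{j}=\log N+\gamma+o(1)$ one gets $\sum_{j=m_n+1}^{n}\tfrac{1}{j}=\log(n/m_n)+o(1)\to\log(1/x)$, so the exponential converges to $x$. The second step is immediate from \eqref{almostfinaldickman}: the ratio of point probabilities converges to $\tfrac{1}{x}\rho(1/x)$. Multiplying the two limits yields
\[
\lim_{n\to\infty}P_n\Big(\tfrac{1}{n}L_1^{(n)}\le x\Big)=x\cdot\tfrac{1}{x}\rho(1/x)=\rho(1/x),
\]
which is the claimed identity on $(0,1)$.

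For the endpoints, $L_1^{(n)}\ge 1$ gives $P_n(L_1^{(n)}/n\le 0)=0$, matching $\lim_{y\to\infty}\rho(y)=0$ (a standard property of the Dickman function, so we adopt $\rho(1/x)|_{x=0}:=0$); and $L_1^{(n)}\le n$ gives $P_n(L_1^{(n)}/n\le 1)=1=\rho(1)$. Since $\rho$ is continuous on $(0,\infty)$ with $\rho(y)\to 0$ as $y\to\infty$, the candidate distribution function $x\mapsto \rho(1/x)$ is continuous on $[0,1]$, so pointwise convergence at every $x\in[0,1]$ delivers convergence in distribution. The main obstacle, which has already been handled in the derivation of \eqref{almostfinaldickman}, was the passage from Dickman's theorem (which by itself provides only the interval information \eqref{weakdickman}) to a sharp asymptotic for the point probability $P(T_{[N]}=N)$, accomplished via the size-biasing identity \eqref{pointinterval} combined with the Dickman identity \eqref{integralident}. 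Once those are granted, the proof of Theorem \ref{largestcycle} amounts to multiplying two limits and verifying the two endpoint values.
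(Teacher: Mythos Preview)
Your argument is correct and follows the same route as the paper: apply \eqref{exploitindep} with $m_n\sim nx$, use \eqref{almostfinaldickman} for the ratio of point probabilities, use the harmonic-sum asymptotic for the exponential factor, and multiply. Your write-up is slightly more careful than the paper's sketch in that you make the identification $\{L_1^{(n)}/n\le x\}=\{L_1^{(n)}\le\lfloor nx\rfloor\}$ explicit and you check the endpoints $x=0,1$, but the substance is identical.
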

As noted, we have followed \cite{ABT} for the proof of Theorem \ref{largestcycle}; the original proof
appears in \cite{SL}.

The distribution arising above, whose distribution function is $\rho(\frac1x)$, for $x\in[0,1]$,   is the first component of the \it Poisson-Dirichlet distribution\rm,
which can be defined as follows.
Recall the GEM distribution, that is, the  stick-breaking model $\{X_n\}_{n=1}^\infty$ introduced at the end of section \ref{Chinese}
and appearing in Theorem \ref{orderedcycles}. Let $\{\hat X_n\}_{n=1}^\infty$ denote the decreasing rearrangement of $\{X_n\}_{n=1}^\infty$.
The Poisson-Dirichlet distribution can be defined as the
 distribution of $\{\hat X_n\}_{n=1}^\infty$.
 See \cite{ABT} and references therein.
  In particular, the distribution of $\hat X_1$ is the distribution
arising in Theorem \ref{largestcycle}.
It can be shown that for any $j\in\mathbb{N}$,  the random vector $(L_1^{(n)},\cdots, L_j^{(n)})$ converges in distribution to the distribution of
$(\hat X_1,\cdots, \hat X_j)$ \cite{ABT}.

In number theory,  the Poisson-Dirichlet distribution comes up in a parallel fashion  in relation to  \it smooth\rm\ numbers, which are
integers with no large prime divisors. Let $p^+_1(k)$ denote the largest prime factor of $k\in \mathbb{N}$. It was proved by Dickman \cite{Dickman} that
$$
\lim_{n\to\infty}\frac1n|\{k\in[n]:p^+_1(k)\le n^x\}|=\rho(\frac1x),\ x\in[0,1].
$$
See \cite{MV,Tene}.
Similar to the extension  from $L_1^{(n)}$ to the vector $(L_1^{(n)},\cdots, L_j^{(n)})$,
the above result can be extended to the vector $\big(p^+_1(\cdot),\cdots, p^+_j(\cdot)\big)$,
where $p^+_i(k)$ denotes the $i$th largest prime factor (counted with multiplicities) of $k$, and $p_i^+(k)=0$ if $k$ has fewer than
$i$ prime factors \cite{Billings}.

\medskip

We now turn to the smallest cycles.
Let $S_j^{(n)}(\sigma)$ denote the length of the $j$th shortest cycle of $\sigma\in S_n$.
We have
\begin{equation*}\label{shortestones}
P_n(S_j^{(n)}>m)=P_n(\sum_{i=1}^m C_i^{(n)}<j).
\end{equation*}
Thus, by
\eqref{cyclespoissonindep},
\begin{equation}\label{shortestones2}
\lim_{n\to\infty}P_n(S_j^{(n)}>m)=P(\sum_{i=1}^mZ_i<j),
\end{equation}
where $\{Z_i\}_{i=1}^\infty$  are independent random variables and $Z_i$ has the Poission$(\frac1i)$ distribution.
A straight forward calculation shows that if $X$ and $Y$ are independent and distributed according to
Poisson$(\lambda_1)$ and Poisson$(\lambda_2)$ respectively, then
$$
P(X+Y=m)=\sum_{i=0}^mP(X=i)P(Y=m-i)=\frac{(\lambda_1+\lambda_2)^m}{m!}e^{-(\lambda_1+\lambda_2)};
$$
 that is $X+Y$ has the Poisson$(\lambda_1+\lambda_2)$ distribution.
Thus,  from \eqref{shortestones2} we have
\begin{equation}\label{shortestones3}
\lim_{n\to\infty}P_n(S_j^{(n)}>m)=\sum_{i=0}^{j-1}e^{-h(m)}\frac{(h(m))^i}{i!},\ \text{where}\ h(m)=\sum_{i=1}^m\frac1i.
\end{equation}

A  theory similar to that discussed above for the longest cycles, that also uses  \eqref{conditionrelation},
can be developed to obtain a result concerning  the so-called \it large deviations\rm\ of
$S_j^{(n)}$ \cite{ABT}.
This result  involves the \it Buchstab function\rm\ $\omega(x)$, which
is
defined for $x\ge1$ as  the unique continuous function satisfying
$$
\omega(x)=\frac1x, \ 1\le x\le 2,
$$
 and  satisfying the differential-delay equation
$$
(x\omega(x))'=\omega(x-1),\ x>2.
$$
In particular, for the shortest cycle, the result is as follows \cite{ABT}.
\begin{theorem}\label{shortestcycle}
\begin{equation*}
P_n(S_1^{(n)}>\frac nx)\sim\frac{x\omega(x)}n,\ x>1, \ \text{as}\ n\to\infty.
\end{equation*}
\end{theorem}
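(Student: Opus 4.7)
The plan is to parallel the proof of Theorem \ref{largestcycle} but at the small end of the spectrum. First I would invoke the conditioning relation \eqref{conditionrelation}: the event $\{S_1^{(n)}>m\}$ coincides with $\{C_1^{(n)}=\cdots=C_m^{(n)}=0\}$. Writing $U_{[m,n]}=\sum_{j=m+1}^njZ_j$, which is independent of $(Z_1,\dots,Z_m)$, and using $P(T_{[n]}=n)=e^{-h(n)}$ from \eqref{T0n}, one obtains
$$P_n(S_1^{(n)}>m)=\frac{P(Z_1=\cdots=Z_m=0)\,P(U_{[m,n]}=n)}{P(T_{[n]}=n)}=e^{h(n)-h(m)}P(U_{[m,n]}=n).$$
Specializing $m=\lfloor n/x\rfloor$, the prefactor tends to $e^{\log x}=x$, so the theorem reduces to proving
$$nP(U_{[\lfloor n/x\rfloor,n]}=n)\longrightarrow\omega(x),\qquad x>1.$$

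Next I would establish the analogue of the size-biasing identity \eqref{pointinterval} for $U_{[m,n]}$. The Poisson Stein identity $E[Z_jg(Z_j)]=(1/j)E[g(Z_j+1)]$ applied to $g=1_{\{U_{[m,n]}=k\}}$ yields
$$kP(U_{[m,n]}=k)=\sum_{j=m+1}^nP(U_{[m,n]}=k-j),$$
and in particular $nP(U_{[m,n]}=n)=\sum_{l=0}^{n-m-1}P(U_{[m,n]}=l)$. Since $U_{[m,n]}$ takes no values in $(0,m]$, only $l=0$ and $l\in[m+1,n-m-1]$ contribute; for the latter, $U_{[m,n]}=l$ forces $Z_j=0$ for all $j>l$, giving the factorization $P(U_{[m,n]}=l)=P(U_{[m,l]}=l)\,e^{h(l)-h(n)}$. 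Therefore
$$nP(U_{[m,n]}=n)=e^{h(m)-h(n)}+\sum_{l=m+1}^{n-m-1}\frac{lP(U_{[m,l]}=l)}{l}\,e^{h(l)-h(n)}.$$

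Writing $\phi_n(x):=nP(U_{[\lfloor n/x\rfloor,n]}=n)$, I would induct on $x$. For $x\in(1,2]$ the sum above is empty since $n-m-1<m+1$, so $\phi_n(x)\to 1/x=\omega(x)$, matching the first branch of Buchstab's definition. For $x>2$, reparametrize $l=n/y$: the summation range becomes $y\in[x/(x-1),x]$ up to $O(1/n)$ corrections, and $l/m\to x/y>1$, so the inductive hypothesis gives $lP(U_{[m,l]}=l)\to\omega(x/y)$ while $e^{h(l)-h(n)}\to 1/y$. A Riemann-sum passage with $|dl|\sim n\,dy/y^2$, followed by the substitution $u=x/y$, produces
$$\phi(x)=\frac{1}{x}+\frac{1}{x}\int_{1}^{x-1}\omega(u)\,du.$$
On the other hand, integrating Buchstab's equation $(x\omega(x))'=\omega(x-1)$ from $2$ to $x$ with boundary value $2\omega(2)=1$ gives precisely $x\omega(x)=1+\int_1^{x-1}\omega(u)\,du$, so $\phi=\omega$ and the theorem follows. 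The main obstacle will be making the Riemann-sum step rigorous: this requires uniform convergence of $\phi_L$ to $\omega$ on compact subsets of $(1,x-1]$ as $L\to\infty$, together with careful control of the endpoint $y\approx x$, where $x/y\to 1$ sits at the junction between the two defining pieces of $\omega$.
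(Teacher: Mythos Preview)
The paper does not actually prove Theorem~\ref{shortestcycle}; it only states the result, cites \cite{ABT}, and remarks that ``a theory similar to that discussed above for the longest cycles, that also uses \eqref{conditionrelation}, can be developed.'' Your proposal is precisely an implementation of that similar theory: you replace $T_{[m]}$ by the tail sum $U_{[m,n]}=\sum_{j>m}jZ_j$, derive the corresponding size-biasing identity, and then show the limit satisfies the integral form $x\omega(x)=1+\int_1^{x-1}\omega(u)\,du$ of the Buchstab equation, just as \eqref{almostfinaldickman} was matched to \eqref{integralident} in the Dickman case. So your approach is not merely consistent with the paper---it is the argument the paper gestures at.

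Your derivation checks out: the conditioning step, the identity $kP(U_{[m,n]}=k)=\sum_{j=m+1}^nP(U_{[m,n]}=k-j)$, the support observation that forces $Z_j=0$ for $j>l$, and the base case $x\in(1,2]$ are all correct. The induction is naturally on the integer $\lceil x\rceil$, and the integral identity you quote for $\omega$ is exactly the integrated differential-delay equation. The one point that genuinely requires work, which you have already flagged, is justifying the Riemann-sum limit: you need $lP(U_{[m,l]}=l)\to\omega(l/m)$ \emph{uniformly} for $l/m$ in compact subsets of $(1,x-1]$, plus a separate bound showing the contribution from $l$ near $m$ (where $l/m\downarrow 1$) is negligible. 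Both are routine once stated, but they are the substance of turning the sketch into a proof.
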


In number theory, the Buchstab function comes up in a parallel
  fashion  in relation to  \it rough\rm\ numbers, which are
integers with no small prime divisors. Let $p^-_1(k)$ denote the smallest prime factor of $k\in \mathbb{N}$. It was proved by
Buchstab  \cite{Buchstab} that
\begin{equation}\label{rough}
\frac1n|\{k\in[n]:p^-(k)\ge n^\frac1x\}|\sim x\omega(x)\frac1{\log n}, \ x>1, \text{as}\ n\to\infty.
\end{equation}
See \cite{MV,Tene}.
(Note that \eqref{rough} for  $x=1$ is the Prime Number Theorem.    Buchstab assumed
the Prime Number Theorem when proving his result.)

\section{The statistical behavior of random partitions of large integers}

A partition $\lambda$ of a positive integer $n$ is  a $j$-tuple  $(a_1,\cdots, a_j)$
where $j\in[n]$, $\{a_i\}_{i=1}^j$ are  positive integers satisfying $a_1\ge\cdots \ge a_j$ and $\sum_{i=1}^ja_i=n$.
Let $\Lambda_n$ denote the set of partitions of $n$.
There is a natural map, call it $\mathcal{M}_n$, from the set of permutations $S_n$ of $[n]$ onto the set of partitions of $[n]$, via the cycle decomposition of the permutation. We could write out a formal definition,
 but a simple example is probably clearer and should suffice.
Let  $\sigma\in S_9$ be given in terms of its cycle decomposition by
$\sigma=(154)(2)(386)(79)$. Then $\mathcal{M}_9(\sigma)$  is the partition (3,3,2,1) of the integer 9.
The map $\mathcal{M}_n$ along with the uniform probability measure $P_n$ on $S_n$ induces a (non-uniform) probability
measure $\mathcal{P}_n$ on $\Lambda_n$; namely,
$$
\mathcal{P}_n(\lambda)=P_n(\{\sigma:\mathcal{M}_n(\sigma)=\lambda\}),\
\text{for}\ \lambda\in\Lambda_n.
$$

%For a partition $\lambda\in\Lambda$, let
%$|\lambda|$ denote the integer of which $\lambda$ is a partition.
For $k\in\mathbb{N}$, let $X_k(\lambda)$ denote the number of parts of the  partition $\lambda\in\Lambda_n$ that are equal to $k$.
For $s\in\mathbb{N}$, let $Y_s(\lambda)$ denote the $s$-th largest part of $\lambda$, with $Y_s(\lambda)=0$
if $\lambda$ has fewer than $s$ parts.
(For convenience later on, we consider $X_k$ and $Y_s$ to be defined simultaneously on $\Lambda_n$, for all $n$.)
For example, consider the partition of 13,  $\lambda=(4,4,2,1,1,1)$. Then
$X_1(\lambda)=3$, $X_2(\lambda)=1$, $X_4(\lambda)=2$ and $X_k(\lambda)=0$ for $k\not\in\{1,2,4\}$;
$Y_1(\lambda)=Y_2(\lambda)=4$, $Y_3(\lambda)=2$, $Y_4(\lambda)=Y_5(\lambda)=Y_6(\lambda)=1$ and $Y_k(\lambda)=0$, for $k\ge 7$.

Recalling  from section \ref{Cyclecounts} that $C^{(n)}_k(\sigma)$ denotes the number of cycles of length
$k$ in the permutation $\sigma\in S_n$, it follows that
$$
X_k(\lambda)=C^{(n)}_k(\sigma), \text{where}\ \lambda=\mathcal{M}_n(\sigma).
$$
Recalling form section \ref{DickmanBuch} that $L^{(n)}_j(\sigma)$
%and $S^{(n)}_j(\sigma)$ denote respectively
denotes the length of the  $j$th longest cycle
% and the $j$th shortest cycle
in $\sigma\in S_n$, it follows
that
$$
Y_s(\lambda)=L^{(n)}_s(\sigma), \ \text{where}\ \lambda=\mathcal{M}_n(\sigma).
$$
In light of
\eqref{cyclespoisson}, it follows
that under the probability measures $\mathcal{P}_n$, the distribution of $X_k$ converges to the Poisson($\frac1k$)
distribution:
\begin{equation}\label{nonunifpart1}
\lim_{n\to\infty}\mathcal{P}_n(X_k=j)=e^{-\frac1k}\frac{(\frac1k)^j}{j!}, \ j=0,1,\cdots.
\end{equation}
And in light of Theorem \ref{largestcycle},
\begin{equation}\label{nonunifpart2}
\lim_{n\to\infty}\mathcal{P}_n(\frac{Y_1}n\le x)=\rho(\frac1x), \ x\in[0,1],
\end{equation}
where we recall that $\rho$ is the Dickman function.
We recall from the discussion after Theorem \ref{largestcycle} that  the distribution on $[0,1]$ whose distribution function
is given by $\rho(\frac1x)$ is the distribution of the first component $\hat X_1$  of the
random vector $\{\hat X_n\}_{n=1}^\infty$ whose distribution is called the
 Poisson-Dirichlet distribution.
From that discussion, it then  follows more generally that
\begin{equation}\label{nonunifpart3}
\lim_{n\to\infty}\mathcal{P}_n(\frac{Y_k}n\le x)=P(\hat X_k\le x), \ x\in[0,1], k\ge1.
\end{equation}

In this section, we consider the statistical behavior of $\Lambda_n$ under the \it uniform\rm\  measure that gives
probability $\frac1{|\Lambda_n|}$ to each partition $\lambda\in\Lambda_n$. The results are very
different from the results in \eqref{nonunifpart1}-\eqref{nonunifpart3} in  the case of the measures
$\{\mathcal{P}_n\}_{n=1}^\infty$.
 %Let $\Lambda$ denote the set
%of all partitions of  nonnegative integers, including the empty partition of 0.
%For $n=0,1,\cdots$, let $\Lambda_n$ denote the
%set of partitions of $n$.
%Let $p(n)=|\Lambda_n|$ denote the number of partitions of $n$ and let $P_n$ denote the uniform probability
%measure on $\Lambda_n$.
The paper \cite{Frist} makes beautiful use of generating functions to study the asymptotic behavior of partition
statistics in the case of the uniform distribution.
We state here three of the many results in that paper. Denote the uniform measure on $\Lambda_n$ by
$P^{\text{part}}_n$. Recall that $X_k$ is the number of parts of size $k$ in the random partition.
\begin{theorem}\label{Xk}
If $\lim_{n\to\infty}\frac{k_n}{n^\frac12}=0$, then
$$
\lim_{n\to\infty}P^{\text{part}}_n(\frac{\pi}{\sqrt{6n}}\thinspace k_nX_{k_n}\le x)=1-e^{-x}, \ x\ge0.
$$
\end{theorem}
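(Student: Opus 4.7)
The plan is to reduce the distribution of $X_{k_n}$ to a ratio of partition counting functions and then apply the Hardy--Ramanujan asymptotic for $|\Lambda_n|$.

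\emph{Step 1: A bijective identity.} Given a partition $\lambda \in \Lambda_n$ with $X_k(\lambda) \ge m$, remove exactly $m$ parts of size $k$ to obtain a partition of $n - mk$; this operation is a bijection between $\{\lambda \in \Lambda_n : X_k(\lambda) \ge m\}$ and $\Lambda_{n-mk}$ (assuming $mk \le n$). Hence
\begin{equation*}
P^{\text{part}}_n(X_k \ge m) = \frac{|\Lambda_{n-mk}|}{|\Lambda_n|}, \qquad 0 \le mk \le n.
\end{equation*}

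\emph{Step 2: Hardy--Ramanujan asymptotics.} Using the classical estimate
\begin{equation*}
|\Lambda_N| \sim \frac{1}{4N\sqrt{3}}\, e^{\pi\sqrt{2N/3}}, \qquad N \to \infty,
\end{equation*}
I would set $N = n$ and $N = n - mk_n$ and form the ratio. The prefactor quotient $\tfrac{n}{n - mk_n}$ tends to $1$ uniformly as long as $mk_n/n \to 0$. For the dominant exponential factor, a Taylor expansion of $\sqrt{1-t}$ at $t = mk_n/n$ gives
\begin{equation*}
\pi\sqrt{\tfrac{2(n-mk_n)}{3}} - \pi\sqrt{\tfrac{2n}{3}} = -\frac{\pi m k_n}{\sqrt{6n}} + O\!\left(\frac{(mk_n)^2}{n^{3/2}}\right).
\end{equation*}

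\emph{Step 3: Plug in the appropriate $m$.} For fixed $x \ge 0$, put $m = m_n := \lceil x\sqrt{6n}/(\pi k_n)\rceil$, so that $mk_n \sim x\sqrt{6n}/\pi$. The assumption $k_n = o(\sqrt{n})$ guarantees $m_n \to \infty$, while $m_n k_n / n = O(1/\sqrt{n}) \to 0$, so both the prefactor quotient tends to $1$ and the quadratic error in Step 2 is $O(1/n)$ and negligible. Combining,
\begin{equation*}
P^{\text{part}}_n\!\left(X_{k_n} \ge m_n\right) = \frac{|\Lambda_{n-m_n k_n}|}{|\Lambda_n|} \longrightarrow e^{-x},
\end{equation*}
which, after rewriting the event, is exactly $P^{\text{part}}_n\!\bigl(\tfrac{\pi k_n X_{k_n}}{\sqrt{6n}} > x\bigr) \to e^{-x}$. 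Since the exponential distribution function is continuous, pointwise convergence of the tail at every $x \ge 0$ yields convergence in distribution to Exp$(1)$.

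\emph{Main obstacle.} The delicate point is controlling the Hardy--Ramanujan estimate uniformly as the argument shifts from $n$ to $n - m_n k_n$, where the shift has order $\sqrt{n}$ (comparable to the natural scale of fluctuation in the exponent). One needs the asymptotic to come with an error term that is uniform over this range of shifted arguments, so that the multiplicative error in the ratio tends to $1$. Given the standard refinements of Hardy--Ramanujan (or the saddle-point analysis via the generating function $\prod_j (1-q^j)^{-1}$), this is manageable but is the step that requires the most care. Apart from that, every other ingredient is a routine Taylor expansion or a combinatorial bijection.
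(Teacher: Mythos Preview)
Your argument is correct and is genuinely different from the paper's route. The paper follows Fristedt: it introduces the grand-canonical measure $Q_q$ under which the $X_j$ are independent geometrics, chooses $q_n=e^{-\pi/\sqrt{6n}}$ so that the random size $N$ concentrates near $n$, and then uses a local estimate $Q_{q_n}(N=n)\sim(96n^3)^{-1/4}$ together with a Prohorov-distance argument to transfer the explicit geometric calculation from $Q_{q_n}$ to $P^{\text{part}}_n$. Your proof bypasses all of this machinery by exploiting the elementary bijection $\{\lambda\in\Lambda_n:X_k(\lambda)\ge m\}\leftrightarrow\Lambda_{n-mk}$ and then reading off the limit from the Hardy--Ramanujan asymptotic for $p(n)$. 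This is shorter and more transparent for the specific statistic $X_k$, but it hinges on the lucky fact that the tail event has an exact expression as a ratio of partition numbers; the Fristedt framework costs more to set up but handles the other statistics in the paper ($Y_s$, $Y_{s_n}$) where no such bijection is available.

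One remark on your ``main obstacle'': you are being slightly over-cautious. You do not need any uniform version of Hardy--Ramanujan. Since $m_nk_n=O(\sqrt n)$, both $n$ and $n-m_nk_n$ tend to infinity, so the asymptotic $p(N)=\frac{1}{4N\sqrt3}e^{\pi\sqrt{2N/3}}(1+o(1))$ applies to each separately, and in the ratio the two $(1+o(1))$ factors cancel to give $1+o(1)$. The only place the hypothesis $k_n=o(\sqrt n)$ is really used is to ensure that the rounding error $|m_nk_n-x\sqrt{6n}/\pi|\le k_n$ contributes $O(k_n/\sqrt n)=o(1)$ to the exponent, so that the threshold $\pi k_n m_n/\sqrt{6n}$ indeed converges to $x$.
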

The above theorem states that for any fixed $k$ ($k_n=k$) as well as for $k_n$ growing at a sufficiently slow rate,
the rescaled quantity $\frac{\pi}{\sqrt{6n}}k_n X_{k_n}$ converges in distribution as $n\to\infty$ to an exponential random variable with parameter 1.
Compare this with
\eqref{nonunifpart1}.

The next theorem concerns $Y_s$, the $s$-th largest part of the random partition.
In the case $s=1$, it was originally proved in \cite{EL}.
\begin{theorem}\label{Ys}
\begin{equation*}
\lim_{n\to\infty}P^{\text{part}}_n(\frac\pi{\sqrt{6n}}Y_s-\log\frac{\sqrt{6n}}\pi\le x)=\int_{-\infty}^x\frac{\exp(-e^{-y}-sy)}{(t-1)!}dy,\ x\in\mathbb{R}, s\in\mathbb{N}.
\end{equation*}
\end{theorem}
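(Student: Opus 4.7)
The plan is to follow Fristedt's method of introducing an auxiliary product measure on the set of all partitions, analyzing the largest parts under that measure, and then transferring the result to the uniform measure on $\Lambda_n$ by a local-limit (de-Poissonization) argument. Define a family of probability measures $\mathbb{Q}_q$, $q\in(0,1)$, on the set of all integer partitions by $\mathbb{Q}_q(\lambda)=\prod_{k\ge1}(1-q^k)\,q^{|\lambda|}$. The point of this construction is that under $\mathbb{Q}_q$ the part multiplicities $\{X_k\}_{k\ge1}$ are mutually independent, with $X_k$ geometric: $\mathbb{Q}_q(X_k=j)=(1-q^k)q^{kj}$. Tuning $q=q_n:=\exp(-\pi/\sqrt{6n})$ makes $\mathbb{E}_{\mathbb{Q}_{q_n}}|\lambda|\sim n$, and conditioning recovers the uniform measure: $P_n^{\mathrm{part}}(\,\cdot\,)=\mathbb{Q}_{q_n}(\,\cdot\mid |\lambda|=n)$.

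Next I would do the (unconditional) calculation for $Y_s$ under $\mathbb{Q}_{q_n}$. Set $c=\pi/\sqrt{6}$ and choose the scaling $m=m_n(x)=\lfloor(\sqrt{n}/c)(\log(\sqrt{n}/c)+x)\rfloor$, so that the event $\{\pi Y_s/\sqrt{6n}-\log(\sqrt{6n}/\pi)\le x\}$ is exactly $\{Y_s\le m\}$. Since $Y_s\le m$ iff the number $W_m:=\sum_{k>m}X_k$ of parts strictly larger than $m$ is less than $s$, independence of the $X_k$ lets me estimate $W_m$ directly. Using $q_n^k=\exp(-ck/\sqrt{n})$ one gets
\begin{equation*}
\mathbb{E}_{\mathbb{Q}_{q_n}}W_m=\sum_{k>m}\frac{q_n^k}{1-q_n^k}\sim\frac{q_n^m}{1-q_n}\sim\frac{\sqrt{n}}{c}\,q_n^m\longrightarrow e^{-x},
\end{equation*}
and a direct Poisson approximation (the $X_k$ for $k>m$ are essentially Bernoullis with tiny parameters $q_n^k$) gives $W_m\stackrel{\mathrm{dist}}{\to}\mathrm{Poisson}(e^{-x})$. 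Hence $\mathbb{Q}_{q_n}(Y_s\le m_n(x))\to\sum_{j=0}^{s-1}e^{-e^{-x}}(e^{-x})^j/j!=:F_s(x)$. A short computation (differentiating termwise and telescoping) shows $F_s'(x)=\exp(-e^{-x}-sx)/(s-1)!$, matching the density in the theorem.

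It remains to show that the same limit persists after conditioning on $|\lambda|=n$. Write $|\lambda|=S_m+T_m$ where $S_m=\sum_{k\le m}kX_k$ and $T_m=\sum_{k>m}kX_k$, these being $\mathbb{Q}_{q_n}$-independent. The event $\{Y_s\le m\}$ depends only on $\{X_k\}_{k>m}$ and hence only on $T_m$ and the pattern of its components. Conditioning gives
\begin{equation*}
P_n^{\mathrm{part}}(Y_s\le m)=\sum_{t\ge0}\mathbb{Q}_{q_n}(Y_s\le m,\,T_m=t)\,\frac{\mathbb{Q}_{q_n}(S_m=n-t)}{\mathbb{Q}_{q_n}(|\lambda|=n)}.
\end{equation*}
On the event $\{Y_s\le m\}$, $T_m$ is of order $m$ (each of the at most $s-1$ parts is at most some polynomial in $\sqrt{n}\log n$), while $S_m$ has mean of order $n$ and variance of order $n^{3/2}$. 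The de-Poissonization step is to prove a local limit theorem of the form $\mathbb{Q}_{q_n}(S_m=n-t)/\mathbb{Q}_{q_n}(|\lambda|=n)\to1$ uniformly for $t$ in the relevant range. Granted this, one can replace the ratio inside the sum by $1$ and conclude $P_n^{\mathrm{part}}(Y_s\le m)\to\mathbb{Q}_{q_n}(Y_s\le m)\to F_s(x)$.

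The main obstacle is that local limit theorem. It is the technical heart of Fristedt's paper and cannot be deduced from the ordinary central limit theorem; it requires either a careful Fourier/characteristic-function analysis of $S_m$ (the characteristic function factors as a product over $k\le m$ and must be estimated away from $0$), or, equivalently, the Hardy--Ramanujan type asymptotics $p(n)\sim(4n\sqrt{3})^{-1}\exp(\pi\sqrt{2n/3})$ together with matching asymptotics for the partition-of-$n$ count restricted to parts of size at most $m$. Once the ratio $\mathbb{Q}_{q_n}(S_m=n-t)/\mathbb{Q}_{q_n}(|\lambda|=n)$ is controlled uniformly in $t$, everything else is the Poisson-approximation calculation above, and the exponential/Gumbel-type limit with density $e^{-e^{-x}-sx}/(s-1)!$ drops out.
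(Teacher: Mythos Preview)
Your approach is essentially the same as the paper's: both follow Fristedt's method of introducing the product measure $Q_q$ under which the $X_k$ are independent geometrics, tuning $q_n=e^{-\pi/\sqrt{6n}}$, computing the limit under $Q_{q_n}$, and then transferring to $P_n^{\text{part}}$ via a local limit estimate $Q_{q_n}(N=n)\sim(96n^3)^{-1/4}$ obtained by Fourier analysis. The paper only sketches this for Theorem~\ref{Xk} and says Theorems~\ref{Ys} and~\ref{Ysn} use ``the same method along with some additional ingredients''; you have correctly identified those ingredients for $Y_s$, namely the reduction $\{Y_s\le m\}=\{W_m<s\}$ and the Poisson approximation $W_m\stackrel{\text{dist}}{\to}\text{Poisson}(e^{-x})$.

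One small point to tighten: your claim that on $\{Y_s\le m\}$ the variable $T_m$ is of order $m$ is not automatic, since the at most $s-1$ parts exceeding $m$ could in principle be arbitrarily large. You need a truncation argument showing that under $\mathbb{Q}_{q_n}$ the probability of a part exceeding, say, $C\sqrt n\log n$ is $o(1)$, so that $T_m=O(\sqrt n\log n)=o(n^{3/4})$ with high probability; then the local limit ratio is uniformly close to $1$ on that event. This is routine but should be stated.
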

Compare Theorem \ref{Ys} to \eqref{nonunifpart2} and \eqref{nonunifpart3}.

If instead of looking at $Y_s$, one considers $Y_{s_n}$ with $s_n\to\infty$ at a sufficiently slow rate, then one gets
convergence in distribution to  a Gaussian
limiting distribution.
\begin{theorem}\label{Ysn}
If $\lim_{n\to\infty}s_n=\infty$ and $\lim_{n\to\infty}\frac{s_n}{n^\frac14}=0$, then
$$
\lim_{n\to\infty}P^{\text{part}}_n(\pi\sqrt{\frac{s_n}{6n}}Y_{s_n}-
\sqrt{s_n}\log\frac{\sqrt{6n}}{\pi s_n}\le x)=\int_{-\infty}^x\frac1{\sqrt{2\pi}}e^{-\frac{y^2}2}dy,\ x\in\mathbb{R}.
$$
\end{theorem}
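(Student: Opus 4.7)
The plan is to employ the Fristedt embedding of $P^{\text{part}}_n$ inside a product measure, and to analyze $Y_{s_n}$ via the dual \emph{tail count} $N(k):=\sum_{i>k}X_i$, using the identity $\{Y_s\le k\}=\{N(k)<s\}$. For a parameter $\alpha_n>0$ and $q=e^{-\alpha_n}$, let $Q_{\alpha_n}$ be the product measure on partitions under which the multiplicities $\{X_k\}_{k\ge 1}$ are independent with $X_k$ geometric of parameter $1-q^k$. Since $Q_{\alpha_n}(\lambda)$ is proportional to $q^{|\lambda|}$, conditioning on $\{|\lambda|=n\}$ returns the uniform measure $P^{\text{part}}_n$. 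Choose $\alpha_n$ so that $E_{Q_{\alpha_n}}|\lambda|=n$, which forces $\alpha_n\sim\pi/\sqrt{6n}$.

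Given $x\in\mathbb{R}$, take the threshold $k_n=k_n(x)$ to be the integer nearest $\frac{\sqrt{6n}}{\pi}\log\frac{\sqrt{6n}}{\pi s_n}+\frac{x\sqrt{6n}}{\pi\sqrt{s_n}}$, so that $\beta:=\alpha_nk_n=\log\frac{\sqrt{6n}}{\pi s_n}+\frac{x}{\sqrt{s_n}}+o(\frac{1}{\sqrt{s_n}})$. A short summation gives
\[
E_{Q_{\alpha_n}}N(k_n)=\sum_{i>k_n}\frac{e^{-\alpha_n i}}{1-e^{-\alpha_n i}}=\frac{e^{-\beta}}{\alpha_n}\bigl(1+O(e^{-\beta})\bigr)=s_n-x\sqrt{s_n}+o(\sqrt{s_n}),
\]
and similarly $\text{Var}_{Q_{\alpha_n}}N(k_n)\sim s_n$. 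Since $N(k_n)$ is a sum of independent geometric random variables whose individual contributions are uniformly small (each $X_i$ with $i>k_n$ is almost Bernoulli with tiny mean $e^{-\alpha_n i}$) and whose cumulative mean tends to infinity, Lindeberg's CLT (or equivalently a Poisson CLT) yields $(N(k_n)-s_n)/\sqrt{s_n}\stackrel{\text{dist}}{\to}N(0,1)$ under $Q_{\alpha_n}$.

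Next comes the de-Poissonization transferring this CLT to $P^{\text{part}}_n$. Decompose $|\lambda|=S_{\le k_n}+S_{>k_n}$, where $S_{\le k_n}=\sum_{i\le k_n}iX_i$ and $S_{>k_n}=\sum_{i>k_n}iX_i$; these are independent under $Q_{\alpha_n}$, and $N(k_n)$ depends only on the second. Writing
\[
P^{\text{part}}_n(N(k_n)\in A)=\frac{\sum_j Q_{\alpha_n}(N(k_n)\in A,\,S_{>k_n}=j)\,Q_{\alpha_n}(S_{\le k_n}=n-j)}{Q_{\alpha_n}(|\lambda|=n)},
\]
one invokes a local central limit theorem for the lattice sum $S_{\le k_n}$, whose variance is of order $n^{3/2}$ and carries the bulk of the variance of $|\lambda|$, to see that $Q_{\alpha_n}(S_{\le k_n}=n-j)$ is nearly constant in $j$ across the effective range of $S_{>k_n}$. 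That range has width of order $\sqrt{\text{Var}_{Q_{\alpha_n}}S_{>k_n}}=O(\sqrt{s_nn}\,\log n)$; the hypothesis $s_n=o(n^{1/4})$ makes this $o(n^{3/4})$, so the local CLT factor may be pulled out and the ratio reduces to $Q_{\alpha_n}(N(k_n)\in A)(1+o(1))$. The Fristedt CLT therefore transfers, and $(N(k_n)-s_n)/\sqrt{s_n}\stackrel{\text{dist}}{\to}N(0,1)$ under $P^{\text{part}}_n$ as well.

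Finally, the inversion: $\{Y_{s_n}\le k_n(x)\}=\{N(k_n(x))<s_n\}$, and since $E_{Q_{\alpha_n}}N(k_n(x))=s_n-x\sqrt{s_n}+o(\sqrt{s_n})$ one has
\[
\frac{N(k_n(x))-s_n}{\sqrt{s_n}}=\frac{N(k_n(x))-E_{Q_{\alpha_n}}N(k_n(x))}{\sqrt{s_n}}-x+o(1),
\]
so $P^{\text{part}}_n(Y_{s_n}\le k_n(x))\to\int_{-\infty}^x\frac{1}{\sqrt{2\pi}}e^{-y^2/2}\,dy$, which after substituting the definition of $k_n(x)$ is exactly the assertion of the theorem. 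The principal obstacle is the de-Poissonization in the third paragraph: one needs a local central limit theorem for $S_{\le k_n}$ that is uniform over a window wider than the typical fluctuations of $S_{>k_n}$, and this is precisely where the hypothesis $s_n=o(n^{1/4})$ becomes decisive.
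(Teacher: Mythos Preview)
Your proposal is correct and follows essentially the same route the paper points to. The paper does not actually prove Theorem~\ref{Ysn}; after stating it, the text says only that ``the proofs of Theorems~\ref{Ys} and~\ref{Ysn} use the same method along with some additional ingredients,'' and then sketches Fristedt's method for the easier Theorem~\ref{Xk}: introduce the product measure $Q_q$ under which the $X_j$ are independent geometrics, tune $q=e^{-\pi/\sqrt{6n}}$ so that $N=|\lambda|$ concentrates near $n$, and transfer the computation from $Q_{q_n}$ to $P^{\text{part}}_n$ via a local limit estimate $Q_{q_n}(N=n)\sim(96n^3)^{-1/4}$.

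Your write-up is exactly this program, with the ``additional ingredients'' for $Y_{s_n}$ spelled out: the duality $\{Y_s\le k\}=\{N(k)<s\}$, the asymptotics $E_{Q}N(k_n)=s_n-x\sqrt{s_n}+o(\sqrt{s_n})$ and $\text{Var}_{Q}N(k_n)\sim s_n$, a Lindeberg CLT for the tail count, and a de-Poissonization in which the hypothesis $s_n=o(n^{1/4})$ is used to ensure that the fluctuations of $S_{>k_n}$ (of order $\sqrt{ns_n}\,\log n$) are $o(n^{3/4})$, small against the local-CLT scale of $S_{\le k_n}$. This matches Fristedt's original argument, which is what the paper cites.
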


 We now sketch the method used to prove Theorem \ref{Xk}.
 The proofs of  Theorems \ref{Ys} and \ref{Ysn}  use the same method along with some additional ingredients.
 Let $\Lambda$ denote the set
of all partitions of all  nonnegative integers, including the empty partition of 0.
Let $p(n)=|\Lambda_n|$ denote the number of partitions of $n$.
 As is well-known and easy to check, the generating function $\sum_{n=0}^\infty p(n)q^n$
 for $\{p(n)\}_{n=0}^\infty$ is given by the formula
\begin{equation}\label{genfuncpart}
\sum_{n=0}^\infty p(n)q^n=\prod_{j=1}^\infty(1-q^j)^{-1}.
\end{equation}
 Define $|\lambda|=n$, if $\lambda\in\Lambda_n$.
 For $q\in(0,1)$, consider the following probability measure on the set $\Lambda$ of all partitions of nonnegative integers:
 \begin{equation}\label{Qmeas}
 Q_q(\lambda)=q^{|\lambda|}\prod_{j=1}^\infty(1-q^j),\ \lambda\in\Lambda.
 \end{equation}
 To see that $Q_q$ is a probability measure, note that
 $$
 \sum_{\lambda\in\Lambda_n}Q_q(\lambda)=p(n)q^n\prod_{j=1}^\infty(1-q^j),
 $$
 and thus using \eqref{genfuncpart},
 $$
 \sum_{\lambda\in\Lambda}Q_q(\lambda)=\sum_{n=0}^\infty\sum_{\lambda\in\Lambda_n}Q_q(\lambda)=
 \prod_{j=1}^\infty(1-q^j)\sum_{n=0}^\infty q^np(n)=1.
 $$
 The key observation is that under $Q_q$, the random variables $\{X_j\}_{j=1}^\infty$ are independent
 and the distribution of $X_j$ is geometric with parameter $q^j$.
 To verify the above observation, note that  the values $\{X_j(\lambda)\}_{j=1}^\infty$ uniquely determine $\lambda$, and
note that  $|\lambda|=\sum_{j=1}^\infty jX_j(\lambda)$. Thus, from \eqref{Qmeas},
 $$
 Q_q(X_j=x_j, j=1,2,\cdots)=q^{\sum_{j=1}^\infty jx_j}\prod_{j=1}^\infty(1-q^j)=\prod_{j=1}^\infty (q^j)^{x_j}(1-q^j),
 n=0,1,\cdots.
 $$
The product on the right hand side above indicates the independence of $\{X_j\}_{j=1}^\infty$ under $Q_q$.

 Let $N=|\lambda|$. Of course, $P^{\text{part}}_n(N=n)=1$. However, under $Q_q$, the number $N$ being partitioned is
 random. From \eqref{Qmeas}, we have
$$
Q_q(N=n)=p(n)q^n\prod_{j=1}^\infty(1-q^j),\ n=0,1\cdots.
$$

Note from the definition of the uniform measure $P^{\text{part}}_n$ on $\Lambda_n$ and from the definition of the measure $Q_q$
 that if $\lambda_1$ and $\lambda_2$ are partitions for which
$N(\lambda_1)=N(\lambda_2)=n$, then $P^{\text{part}}_n(\lambda_1)=P^{\text{part}}_n(\lambda_2)$ and
$Q_q(\lambda_1)=Q_q(\lambda_2)$.
From these fact it follows that
\begin{equation}\label{condequ}
P^{\text{part}}_n(A)=Q_q(A|N=n),\  A\subset \Lambda_n.
\end{equation}

Let $E^{Q_q}$ denote expectation with respect to $Q_q$.
Consider  the probability generating function of $N$ under $Q_q$, defined
by $\Phi(r)=E^{Q_q}r^N$. We have
\begin{equation}\label{probgen}
\begin{aligned}
&\Phi(r)=E^{Q_q}r^N=\sum_{n=0}^\infty Q_q(N=n)r^n=\\ &\prod_{j=1}^\infty(1-q^j)\sum_{n=0}^\infty p(n)q^nr^n=\prod_{j=1}^\infty\frac{1-q^j}{1-(qr)^j},
\end{aligned}
\end{equation}
where we have used \eqref{genfuncpart} in the final equality.
Using the fact that $\Phi'(1)=E^{Q_q}N$ and $\Phi''(1)=E^{Q_q}N(N-1)$, one can calculate the expected value
and variance of $N$:
$$
\begin{aligned}
&E^{Q_q}N=\sum_{j=1}^\infty \frac{jq^j}{1-q^j};\\
&\text{Var}_{Q_q}(N)=\sum_{j=1}^\infty \frac{j^2q^j}{(1-q^j)^2}.
\end{aligned}
$$
An asymptotic analysis shows that if one chooses
\begin{equation}\label{qnexp}
q=q_n=e^{-\frac\pi{\sqrt{6n}}},
\end{equation}
then $\text{Var}_{Q_{q_n}}(N)\sim\frac{\sqrt{24}}\pi n^\frac32$
and  $|n-E^{Q_{q_n}}N|=o(n^\frac34)$.
This shows that the random variable $N$ under the measure $Q_{q_n}$ concentrates around $n$. Indeed,
$$
|N-n|\le |n-E^{Q_{q_n}}N|+|N-E^{Q_{q_n}}N|=o(n^{\frac34})+ |N-E^{Q_{q_n}}N|,
$$
and by Chebyshev's inequality,
$$
Q_{q_n}(|N-E^{Q_{q_n}}N|)\ge n^\frac34l_n)\le \frac{O(n^\frac32)}{n^\frac32l_n^2}\to0,\ \text{if}\ l_n\to\infty.
$$

Thus, with probability approaching 1 as $n\to\infty$, $N$ will be within $l_nn^\frac34$ of $n$ if $l_n\to\infty$.
Using  Fourier analysis, it was shown in \cite{Frist} that
$Q_{q_n}(N=n)\sim(96n^3)^{-\frac14}$. From these results, the author was able to show that the
Prohorov distance (a certain metric) between the distribution of $X_{k_n}$ under
the probability measure $Q_{q_n}$ and its distribution under  the measure $Q_{q_n}(\cdot\thinspace|N=n)$ converges to 0 as $n\to\infty$, if $k_n=o(n^\frac12)$. It then follows from
\eqref{condequ} that the Prohorov distance between the distributions of $X_{k_n}$ under  $Q_{q_n}$ and under $P^{\text{part}}_n$ also converges to 0 as $n\to\infty$.
From this, it follows that the $P^{\text{part}}_n$-probability appearing on the left hand side
of Theorem \ref{Xk} can  be replaced by the $Q_{q_n}$ probability.
However, as noted,  under $Q_{q_n}$, the random variables  $\{X_j\}_{j=1}^\infty$ are independent with geometric distributions; thus, everything can
 be calculated explicitly.
 Indeed, we have
$$
\begin{aligned}
&Q_{q_n}(\frac{\pi}{\sqrt{6n}}k_nX_{k_n}\le x)=Q_{q_n}(X_{k_n}\le \frac{\sqrt{6n}x}{\pi k_n})=
\sum_{j=0}^{\lfloor \frac{\sqrt{6n}x}{\pi k_n}\rfloor}(1-q_n^{k_n})(q_n^{k_n})^j=\\
&1-q_n^{k_n\lfloor \frac{\sqrt{6n}x}{\pi k_n}\rfloor}
\stackrel{n\to\infty}{\to}1-e^{-x},
\end{aligned}
$$
where in the final step we use \eqref{qnexp}.
\medskip

\section{Threshold calculations for Erd\H{o}s-R\'enyi random graphs using the first and second moment methods}

We
recall the definition of the  Erd\H{o}s-R\'enyi random graph $G(n,p)$, for $n\in\mathbb{N}$ and
$p\in(0,1)$. This graph has $n$ vertices and thus $\binom n2$ possible edges.
Independently, each of these edges is included in the graph with probability $p$, and not included with probability
$1-p$.
We will analyze  three so-called threshold properties of the random graphs $G(n,p_n)$ as $n\to\infty$, in two cases with variable $p_n$, and in one case with fixed $p_n=p$.
In what follows, $P_n$ and $E_n$ will be used for probabilities and expectations concerning $G(n,p)$
or $G(n,p_n)$.

The method of proof we use in these threshold calculations is based on the  first and second moment methods
that were stated and proved in section \ref{tools}---see \eqref{firstmommeth} and \eqref{secondmommeth}.

\medskip

\noindent \bf  Threshold for disconnected vertices.\rm\
The first result we present concerns
disconnected vertices in $G(n,p_n)$.
\begin{theorem}\label{disconnectvertex}
Let $D_n$ be the random variable denoting the number of disconnected vertices in $G(n,p_n)$.

\noindent i. If $p_n=\frac{\log n+c_n}n$, and $\lim_{n\to\infty}c_n=\infty$, then
$D_n\stackrel{\text{dist}}{\to}0$; equivalently,
$\lim_{n\to\infty}P_n(D_n=0)=1$;

\noindent ii. If $p_n=\frac{\log n+c_n}n$, and $\lim_{n\to\infty}c_n=-\infty$, then
$\frac{D_n}{E_nD_n}\stackrel{\text{dist}}{\to}1$. Also,
$\lim_{n\to\infty}P_n(D_n>M)=1$, for any $M\in\mathbb{N}$.
\end{theorem}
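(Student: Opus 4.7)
The plan is to attack both parts via the indicator random variable $I_i = 1_{\{\text{vertex } i \text{ is isolated in } G(n,p_n)\}}$, so that $D_n = \sum_{i=1}^n I_i$, and then apply the first moment method for part (i) and the second moment method for part (ii). First I would compute $E_n I_i = (1-p_n)^{n-1}$, since vertex $i$ is isolated iff each of the $n-1$ potential edges incident to it is absent. By linearity, $E_n D_n = n(1-p_n)^{n-1}$. Using $(n-1)\log(1-p_n) = -(n-1)p_n + O((n-1)p_n^2)$, and substituting $p_n = \frac{\log n + c_n}{n}$, this yields $E_n D_n \sim e^{-c_n}$ as $n \to \infty$ (the error term $O(\log^2 n/n)$ being negligible).

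For part (i), since $c_n \to \infty$ we have $E_n D_n \to 0$, so the first moment method \eqref{firstmommeth} applied to $W_n := D_n$ (which is nonnegative) gives $D_n \stackrel{\text{dist}}{\to} 0$; since $D_n$ is integer valued, this is equivalent to $P_n(D_n = 0) \to 1$. For part (ii), $c_n \to -\infty$ gives $E_n D_n \to \infty$, and I need to show $\sigma^2(D_n) = o((E_n D_n)^2)$ so that \eqref{secondmommeth} applies. Using \eqref{variancesumindicator}, I compute
\begin{equation*}
\sigma^2(D_n) = \sum_{i=1}^n E_n I_i(1-E_n I_i) + \sum_{i\neq j}\bigl(E_n I_i I_j - E_n I_i \cdot E_n I_j\bigr).
\end{equation*}
The key combinatorial computation is $E_n I_i I_j = (1-p_n)^{2n-3}$, because both $i$ and $j$ are isolated iff the $2(n-2)+1 = 2n-3$ edges touching $i$ or $j$ are all absent (the shared edge $\{i,j\}$ being counted once). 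Consequently each pairwise covariance equals $(1-p_n)^{2n-3} - (1-p_n)^{2n-2} = p_n(1-p_n)^{2n-3}$.

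Putting this together, I would bound $\sigma^2(D_n) \le E_n D_n + n(n-1) p_n (1-p_n)^{2n-3}$. Dividing by $(E_n D_n)^2 = n^2(1-p_n)^{2n-2}$, the first contribution is $1/E_n D_n \to 0$, while the second is $\frac{(n-1)p_n}{n(1-p_n)} \sim p_n \to 0$. Hence $\sigma^2(D_n)/(E_n D_n)^2 \to 0$, and \eqref{secondmommeth} yields $D_n/E_n D_n \stackrel{\text{dist}}{\to} 1$. Finally, for any fixed $M$, convergence in distribution to the constant $1$ implies $P_n\bigl(|D_n/E_n D_n - 1| > \tfrac12\bigr) \to 0$; since $E_n D_n \to \infty$, the event $\{D_n/E_n D_n > 1/2\}$ is eventually contained in $\{D_n > M\}$, giving $P_n(D_n > M) \to 1$.

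I do not anticipate any serious obstacle: the whole argument is a clean application of the first and second moment methods developed in Section \ref{tools}, and the only step requiring mild care is the asymptotic simplification of $(1-p_n)^{n-1}$ and the verification that the covariance term is negligible compared to $(E_n D_n)^2$; both reduce to the observation that $p_n \to 0$ while $np_n = \log n + c_n$ is of logarithmic order.
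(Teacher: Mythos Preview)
Your proposal is correct and follows essentially the same approach as the paper: indicator variables for isolated vertices, the first moment method for part~(i), and the second moment method for part~(ii), with the same key computation $E_n I_iI_j=(1-p_n)^{2n-3}$. Your variance bookkeeping via the covariance decomposition is slightly cleaner than the paper's direct computation of $E_nD_n^2-(E_nD_n)^2$, but the substance is identical.
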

\begin{proof}
For part (i), we use the first moment method.  For $1\le j\le n$, let $D_{n,j}$ denote the indicator random variable for vertex $j$ to be disconnected; that is, $D_{n,j}$  is equal to 1 if vertex $j$  is disconnected
in $G(n,p_n)$,
 and equal to 0 otherwise.
Then we can represent the random variable $D_n$ as $D_n=\sum_{j=1}^n D_{n,j}$.
For distinct vertices $j,k\in[n]$, let  $I_{n,j,k}$ denote the indicator random variable that is equal to 1 if $G(n,p_n)$ contains an edge connecting $j$ and $k$, and is equal to 0 otherwise.
Then $D_{n,j}=\prod_{k\in[n];k\neq j}(1-I_{n,j,k})$.
By the definition of the Erd\H{o}s-R\'enyi graph, the random variables $\{I_{n,j,k}\}_{k\in[n],k\neq j}$ are independent and  $E_nI_{n,j,k}=P_n(I_{n,j,k}=1)=p_n$.
Thus,
\begin{equation}\label{Dnj}
E_nD_{n,j}=\prod_{k\in[n];k\neq j}E(1-I_{n,j,k})=(1-p_n)^{n-1}
\end{equation}
 and
\begin{equation}\label{EDn}
E_nD_n=n(1-p_n)^{n-1}.
\end{equation}
Substituting $p_n$ as in part (i) of the theorem
 into \eqref{EDn}, it follows that
 $\lim_{n\to\infty}E_nD_n=0$, and thus by the first moment method,
 $D_n$ converges in distribution to 0; that is $\lim_{n\to\infty}P_n(D_n\ge\epsilon)=0$, for all $\epsilon>0$. Since $D_n$ is integer-valued, this gives
 $\lim_{n\to\infty}P(D_n=0)=1$.

 We  use the second moment method for part (ii).
 We write
 \begin{equation}\label{Dn2ndmom}
 E_nD_n^2=E\sum_{j=1}^nD_{n,j}\sum_{k=1}^nD_{n,k}=\sum_{j=1}^nE_nD_{n,j}+2\sum_{1\le j<k\le n}E_nD_{n,j}D_{n,k}.
 \end{equation}
 We have
 \begin{equation}\label{pairs}
\begin{aligned}
& E_nD_{n,j}D_{n,k}=P_n(D_{n,j}=D_{n,k}=1)=\\
&P_n(I_{n,j,l}=I_{n,k,m}=0,\ l\in[n]-\{j\}, m\in[n]-\{j,k\})=(1-p_n)^{2n-3},\ j\neq k.
 \end{aligned}
 \end{equation}
From \eqref{Dnj}-\eqref{pairs},  we obtain
\begin{equation}\label{together}
\sigma^2(D_n)=E_nD_n^2-(E_nD_n)^2=n(n-1)(1-p_n)^{2n-3}+n(1-p_n)^{n-1}-\big(n(1-p_n)^{n-1}\big)^2.
\end{equation}
Let $p_n$ be as in part (ii) of the theorem. Then standard estimates give
$$
n(n-1)(1-p_n)^{2n-3}\sim\big(n(1-p_n)^{n-1}\big)^2\sim e^{-2c_n}\stackrel{n\to\infty}{\to}\infty.
$$
Using this with \eqref{together} and \eqref{EDn}, we conclude that
$\sigma^2(D_n)=o((E_nD_n)^2)$ and $\lim_{n\to\infty}E_nD_n=\infty$. In particular then,  by the second moment method,
$\frac{D_n}{E_nD_n}\stackrel{\text{dist}}{\to}1$.  Since $E_nD_n\stackrel{n\to\infty}{\to}\infty$, it follows from the convergence in distribution that
$\lim_{n\to\infty}P_n(D_n>M)=1$, for any $M\in\mathbb{N}$.
\end{proof}

\medskip

\noindent \bf Threshold for connectivity.\rm\
We now investigate the threshold for connectivity in $G(n,p_n)$.
If $G(n,p_n)$ has at least one disconnected vertex, then of course $G(n,p_n)$ is disconnected.
Thus, the threshold  of $p_n$ for connectivity is greater than or equal to that for no disconnected vertices.
It turns out that the threshold for connectivity   is asymptotically the same as for no disconnected vertices.
The following result was first proved in \cite{ER}.
\begin{theorem}\label{connectivity}

\noindent i. If $p_n=\frac{\log n+c_n}n$, and $\lim_{n\to\infty}c_n=\infty$, then

\noindent $\lim_{n\to\infty}P_n(G(n,p_n)\ \text{\rm is connected})=1$;

\noindent ii. If $p_n=\frac{\log n+c_n}n$, and $\lim_{n\to\infty}c_n=-\infty$, then

\noindent $\lim_{n\to\infty}P_n(G(n,p_n)\ \text{\rm is connected})=0$.
\end{theorem}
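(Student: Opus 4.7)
Part (ii) follows immediately from Theorem \ref{disconnectvertex}(ii): an isolated vertex witnesses disconnectedness, so
$$P_n(G(n,p_n)\ \text{is connected}) \le P_n(D_n = 0) = 1 - P_n(D_n \ge 1) \to 0.$$

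For part (i), since connectivity is monotone in $p$, I may assume without loss of generality that $c_n \le \log n$ (coupling with a denser graph otherwise). I would then apply the first moment method to the number of small connected components. Let $N_k$ denote the number of connected components of $G(n,p_n)$ of size exactly $k$. Since $G(n,p_n)$ is disconnected if and only if it has a component of size at most $\lfloor n/2\rfloor$, the union bound yields $P_n(G(n,p_n)\ \text{is disconnected}) \le \sum_{k=1}^{\lfloor n/2\rfloor} E_n N_k$. For a fixed $k$-set $S$, the event ``$S$ is a connected component'' is the intersection of two events on disjoint edge-slots---hence independent---namely, ``$G[S]$ is connected'' and ``no edge joins $S$ to $S^c$''. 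The second has probability $(1-p_n)^{k(n-k)}$, while the first, by the union bound over the $k^{k-2}$ labeled spanning trees of $S$ (Cayley's formula) each present with probability $p_n^{k-1}$, is at most $k^{k-2}\, p_n^{k-1}$. Summing over the $\binom{n}{k}$ choices of $S$ gives
$$E_n N_k \le \binom{n}{k}\, k^{k-2}\, p_n^{k-1}\, (1-p_n)^{k(n-k)}.$$
The $k=1$ term reduces to $E_n D_n = n(1-p_n)^{n-1} \sim e^{-c_n} \to 0$ from the calculation already carried out in the proof of Theorem \ref{disconnectvertex}(i).

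The main obstacle is estimating the tail of the sum, which I would handle by splitting the range at $K_n = \lceil \log n\rceil$. For $2 \le k \le K_n$, combining $\binom{n}{k} \le n^k/k!$ with Stirling's bound $k^{k-2}/k! \le e^k/k^2$, writing $p_n^{k-1} = (\log n + c_n)^{k-1}/n^{k-1}$, and expanding $(1-p_n)^{k(n-k)} = n^{-k} e^{-kc_n}(1+o(1))$ uniformly on this range (valid since $k^2(\log n + c_n)/n = O((\log n)^3/n) \to 0$ under our assumption on $c_n$) produces a partial sum dominated by the $k=2$ term, of order $O((\log n)/(ne^{2c_n})) \to 0$. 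For $K_n < k \le \lfloor n/2\rfloor$ I would use the cruder inequalities $\binom{n}{k}\, k^{k-2} \le (ne)^k/k^2$ and $(1-p_n)^{k(n-k)} \le e^{-p_n k n/2} = n^{-k/2} e^{-kc_n/2}$ (valid since $n-k \ge n/2$) to obtain a bound of the form $E_n N_k \le (n/k^2)\, r_n^{k}$ with $r_n = e(\log n + c_n)/(\sqrt{n}\, e^{c_n/2}) \to 0$; the resulting geometric-type tail is super-polynomially small starting from its first term $r_n^{K_n}$. No new probabilistic ideas are required beyond the first moment method itself---Cayley's formula and the careful bookkeeping across the two regimes constitute the technical heart of the argument.
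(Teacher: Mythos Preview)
Your argument is correct, but it takes a more elaborate route than the paper's. The paper does not insist that the small piece be a \emph{connected} component; it simply observes that $G(n,p_n)$ is disconnected iff there exists a set $A$ with $|A|\le\lfloor n/2\rfloor$ having no edge to its complement, and applies the union bound directly:
\[
P_n(G(n,p_n)\ \text{is disconnected})\le\sum_{k=1}^{\lfloor n/2\rfloor}\binom nk(1-p_n)^{k(n-k)}.
\]
This avoids Cayley's formula and the factor $k^{k-2}p_n^{k-1}$ altogether. The paper then splits the sum at $\lfloor n^{3/4}\rfloor$ (rather than $\lceil\log n\rceil$), using $\binom nk\le(en/k)^k$ and $1-x\le e^{-x}$ on the lower range and the crude bound $k(n-k)\ge kn/2$ on the upper range. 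Your approach buys a sharper per-term estimate on $E_nN_k$---useful if one wanted, say, to analyse the component-size distribution---but for the bare connectivity threshold the paper's cut-counting argument is strictly simpler and requires less bookkeeping. Part~(ii) is handled identically in both proofs.
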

The proof of part (ii) of course follows from part (ii) of Theorem \ref{disconnectvertex}, so we only need  to prove part (i).
The proof is not via the simple   first moment method presented above, but rather via a slightly more involved first moment technique \cite{Boll}.

\noindent \it Proof of part (i).\rm\ We may and will assume that $n\ge3$ and $1\le c_n\le \log n$.
For each $A\subset[n]$, let $I_{A,n}$ be the indicator random variable that is equal to one if in $G(n,p_n)$ every vertex in $A$ is   disconnected from the set of vertices $[n]-A$, and that is equal to zero otherwise.
Then
\begin{equation}\label{ProbIA}
P_n(I_{A,n}=1)=(1-p_n)^{k(n-k)},\ \text{if}\ |A|=k.
\end{equation}
Clearly,
\begin{equation}\label{unionequal}
P_n(G(n,p_n)\ \text{\rm is disconnected})= P_n(\cup_{A\subset[n],|A|\le \lfloor\frac n2\rfloor}\{I_{A,n}=1\}).
\end{equation}
Since the probability of the union of events is less than or equal to the sums of the probabilities of the individual events composing the union, we have from
\eqref{ProbIA} and \eqref{unionequal} that
\begin{equation}\label{unionbd}
P_n(G(n,p_n)\ \text{\rm is disconnected})\le\sum_{k=1}^{\lfloor\frac n2\rfloor}\binom nk(1-p_n)^{ k(n-k)}.
\end{equation}

We break the sum on the right hand side of \eqref{unionbd} into two parts.
Substituting for $p_n$ from part (i) of the theorem, using the estimate
$\binom nk\le (\frac{en}k)^k$ from Stirling's formula,  using
 the inequality $1-x\le e^{-x}$, for $x\ge0$, and recalling our additional assumption  $1\le c_n\le \log n$ from
the first line of the proof,  we have
\begin{equation*}\label{firstpart}
\begin{aligned}
&\sum_{k=1}^{\lfloor n^\frac34\rfloor}\binom nk(1-p_n)^{ k(n-k)}\le\sum_{k=1}^{\lfloor n^\frac34\rfloor}(\frac{en}k)^ke^{-k(\log n+c_n)}e^{2k^2\frac{\log n}n}=\\
& e^{(1-c_n)}e^{\frac{2\log n}n}+\sum_{k=2}^{\lfloor n^\frac34\rfloor}
\big(\frac{e^{1-c_n}e^{2k\frac{\log n}n}}k\big)^k\le\\
& e^{(1-c_n)}e^{\frac{2\log n}n}+e^{2(1-c_n)}\sum_{k=2}^{\lfloor n^\frac34\rfloor}\big(\frac{e^{2n^{-\frac14}\log n}}2\big)^k.
\end{aligned}
\end{equation*}
 Consequently, for sufficiently large $n$,
 \begin{equation}\label{firstpartagain}
 \sum_{k=1}^{\lfloor n^\frac34\rfloor}\binom nk(1-p_n)^{ k(n-k)}\le e^2e^{-c_n}+e^{2(1-c_n)}.
 \end{equation}

 For the second part of the sum, using two of the inequalities used for the first part, using
 the fact that $k(n-k)\ge \frac{kn}2$ (which of course holds over the entire original range of $k$), and substituting
 for $p_n$, we have
 \begin{equation}\label{secondpart}
 \begin{aligned}
 &\sum_{k=\lfloor n^\frac34\rfloor+1}^n\binom nk(1-p_n)^{ k(n-k)}\le\sum_{k=\lfloor n^\frac34\rfloor+1}^n(\frac{en}k)^ke^{-\frac{k\log n}2}\le\\
 &\sum_{k=\lfloor n^\frac34\rfloor+1}^n(en^{\frac14})^kn^{-\frac k2}=\sum_{k=\lfloor n^\frac34\rfloor+1}^n(en^{-\frac14})^k.
 \end{aligned}
 \end{equation}
 (Note that for this part of the sum we have ignored the term $c_n$ in $p_n$ as it is not needed.)
 From \eqref{unionbd}-\eqref{secondpart}, we conclude that
 $$
 \lim_{n\to\infty}P_n(G(n,p_n)\ \text{is disconnected})=0,
 $$
 if $p_n$ is as in part (i) of the theorem.\hfill $\square$
\medskip

\noindent \bf Threshold for clique size.\rm\
We now consider a problem regarding $G(n,p)$ with $p\in(0,1)$ fixed.
 Recall that a \it clique \rm of size $k$ in $G(n,p)$ is
a complete subgraph on $k$ vertices, and that the \it clique number \rm of the graph is the largest $k$ for which a clique of size $k$
exists. Let $\log_{\frac1p}$ denote the logarithm in base $\frac1p$ and let $\log^{(2)}_\frac1p=\log_{\frac1p}\log_\frac1p$.
The following result reveals the close to deterministic nature of the clique number  in $G(n,p)$.
See \cite{Boll,Pin14}.
\begin{theorem}\label{clique}
Let $L_n$ be the random variable denoting the clique number of $G(n,p)$. Then
$$
\lim_{n\to\infty}P_n(L_n\ge 2\log_{\frac1p}n- c\log^{(2)}_{\frac1p} n)=\begin{cases} 0,\ \text{if}\ c<2;\\ 1,\ \text{if}\ c>2.\end{cases}
$$
\end{theorem}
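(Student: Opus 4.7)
The plan is to introduce $W_k(G) := $ (number of cliques of size $k$ in $G$) and exploit the equivalence $\{L_n \ge k\} = \{W_k \ge 1\}$. For each $A \subset [n]$ with $|A| = k$, let $I_{A,n}$ denote the indicator that the induced subgraph on $A$ is complete. Then $W_k = \sum_{|A|=k} I_{A,n}$, and since forming a $k$-clique requires the independent presence of exactly $\binom{k}{2}$ edges,
$$E_n W_k = \binom{n}{k}\, p^{\binom{k}{2}}.$$
A routine computation using Stirling's formula shows that, with $k_n := \lceil 2\log_{\frac1p} n - c\log_{\frac1p}^{(2)} n\rceil$,
$$\log E_n W_{k_n} \sim (c-2)(\log n)\bigl(\log_{\frac1p}^{(2)} n\bigr) \quad\text{as}\ n\to\infty,$$
with the cancellation of the $O(\log n \log_{\frac1p} n)$ terms coming from $\log \binom{n}{k_n} \approx k_n \log n - k_n\log k_n + k_n$ and $\binom{k_n}{2}\log p$.

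For the case $c<2$, the above gives $E_n W_{k_n} \to 0$, and Markov's inequality (the first moment method \eqref{firstmommeth}) yields $P_n(L_n \ge k_n) = P_n(W_{k_n} \ge 1) \le E_n W_{k_n} \to 0$.

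For the case $c > 2$ one has $E_n W_{k_n} \to \infty$, and I would apply the second moment method \eqref{secondmommeth}. Decomposing $E_n W_k^2$ by the overlap size $j = |A \cap B|$, and noting that $\{I_{A,n} = I_{B,n} = 1\}$ requires the presence of exactly $2\binom{k}{2} - \binom{j}{2}$ distinct edges,
$$E_n W_k^2 = \sum_{j=0}^k \binom{n}{k}\binom{k}{j}\binom{n-k}{k-j}\, p^{2\binom{k}{2} - \binom{j}{2}}.$$
Dividing by $(E_n W_k)^2 = \binom{n}{k}^2 p^{2\binom{k}{2}}$, the goal reduces to showing
$$\frac{E_n W_{k_n}^2}{(E_n W_{k_n})^2} = \sum_{j=0}^{k_n} \frac{\binom{k_n}{j}\binom{n-k_n}{k_n-j}}{\binom{n}{k_n}}\, p^{-\binom{j}{2}} \longrightarrow 1.$$
Once established, this gives $\sigma^2(W_{k_n}) = o\bigl((E_n W_{k_n})^2\bigr)$, hence $W_{k_n}/E_n W_{k_n} \to 1$ in distribution, and since $E_n W_{k_n} \to \infty$ we obtain $P_n(W_{k_n} \ge 1) \to 1$.

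The main obstacle is the quantitative control of this sum over $j$. The $j=0$ term equals $\binom{n-k_n}{k_n}/\binom{n}{k_n} = \prod_{i=0}^{k_n-1}\bigl(1 - k_n/(n-i)\bigr)$, which tends to $1$ because $k_n^2 = O((\log n)^2) = o(n)$. The contribution from $j \ge 1$ must be shown to be $o(1)$. A short calculation yields that the ratio of the $(j+1)$-st summand to the $j$-th is
$$\frac{(k_n-j)^2}{(j+1)(n-2k_n+j+1)}\, p^{-j},$$
so the small-$j$ terms are dominated by the $j=1$ term, which is of order $k_n^3/n = O((\log n)^3/n)$. For moderate and large $j$, one splits the sum at a suitable cutoff and estimates each piece separately; the hypothesis $c > 2$ provides precisely the margin in the asymptotic for $E_n W_{k_n}$ needed to absorb the factor $p^{-\binom{j}{2}}$ in the tail. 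This step, standard but delicate, is where the full strength of the threshold is used.
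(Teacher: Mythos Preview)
Your proposal is correct and follows essentially the same approach as the paper: both deduce Theorem~\ref{clique} from the first and second moment analysis of the clique count $W_k=N_{n,k}$, with the same expectation asymptotic $\log E_nN_{n,k_n}\sim(c-2)(\log n)\log^{(2)}_{1/p}n$ and the same overlap decomposition of $E_nN_{n,k_n}^2$. The paper likewise carries out the first moment calculation in full and only sketches the second moment argument, referring to \cite{Pin14} for the details you flag as ``standard but delicate.''
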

 Theorem \ref{clique}
is an immediate corollary of the following theorem \cite{Pin14}.
 For each $k\in[n]$, define the random variable
$N_{n,k}$ to be the \it number\rm\ of cliques in $G(n,p)$ of size $k$.
\begin{theorem}
\noindent i. If $k_n\ge 2\log_{\frac1p}n-c\log^{(2)}_\frac1pn$, for some $c<2$, then
\begin{equation}\label{Exp0}
\lim_{n\to\infty}E_nN_{n,k_n}=0.
\end{equation}
Thus,
$N_{n,k_n}\stackrel{\text{dist}}{\to}0$; equivalently, $\lim_{n\to\infty}P_n(N_{n,k_n}=0)=1$;

\noindent ii. If $k_n\le 2\log_{\frac1p}n-c\log^{(2)}_\frac1pn$, for some $c>2$,
then
\begin{equation}\label{Exp1}
\lim_{n\to\infty}E_nN_{n,k_n}=\infty.
\end{equation}
Also,
\begin{equation}\label{wlln}
\frac{N_{n,k_n}}{EN_{n,k_n}}\stackrel{\text{dist}}{\to}1,
\end{equation}
and
\begin{equation}\label{wtoinfty}
\lim_{n\to\infty}P_n(N_{n,k_n}>M)=1,\ \text{for any}\ M\in\mathbb{N}.
\end{equation}
\end{theorem}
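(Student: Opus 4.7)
The plan is to represent $N_{n,k}$ as a sum of indicator random variables over the $k$-subsets of $[n]$, compute the first moment exactly, use Stirling's estimates to locate the threshold $k\sim 2\log_{1/p}n$, and then apply the first and second moment methods of section~\ref{tools}.

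\textbf{First moment and the assertions \eqref{Exp0}, \eqref{Exp1}.} For each $k$-subset $S\subset[n]$, let $1_S$ be the indicator that the induced subgraph on $S$ is a complete graph; since this requires $\binom{k}{2}$ independent edges to be present, $E_n 1_S = p^{\binom{k}{2}}$. Hence $N_{n,k}=\sum_{|S|=k}1_S$ and
\begin{equation}\label{ENnkprop}
E_n N_{n,k} = \binom{n}{k}\,p^{\binom{k}{2}}.
\end{equation}
Set $L:=\log_{1/p}n$, so $\log^{(2)}_{1/p}n=\log_{1/p}L$, and let $k^\sharp_n := 2L-c\log_{1/p}L$. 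Taking $\log_{1/p}$ of \eqref{ENnkprop} and using the standard Stirling estimate $\log_{1/p}\binom{n}{k}=k\log_{1/p}(en/k)+O(\log L)$, valid for $k=O(\log n)$, one verifies that the two leading $2L^2$-contributions coming from $\log_{1/p}\binom{n}{k^\sharp_n}$ and from $-\binom{k^\sharp_n}{2}$ cancel, leaving
\begin{equation}\label{keyasympprop}
\log_{1/p}\bigl(E_n N_{n,k^\sharp_n}\bigr) = (c-2)\,L\log_{1/p}L + o(L\log_{1/p}L).
\end{equation}
When $c<2$, \eqref{keyasympprop} tends to $-\infty$, giving \eqref{Exp0}, and the first moment method \eqref{firstmommeth} then delivers part~(i) since $N_{n,k_n}$ is a nonnegative integer. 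When $c>2$, \eqref{keyasympprop} tends to $+\infty$; combined with the easily verified unimodality of $k\mapsto E_nN_{n,k}$ (from the ratio $E_nN_{n,k+1}/E_nN_{n,k}=\frac{n-k}{k+1}p^k$) and with $E_nN_{n,1}=n\to\infty$, this yields \eqref{Exp1} for any $k_n\le k^\sharp_n$.

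\textbf{Second moment and concentration.} For \eqref{wlln}, by the second moment method \eqref{secondmommeth} it suffices to show $E_n N_{n,k_n}^2/(E_n N_{n,k_n})^2\to1$. Starting from \eqref{variancesumindicator} and classifying ordered pairs $(S,T)$ of $k$-subsets by the overlap $j=|S\cap T|$, the combinatorial identity $E_n 1_S 1_T = p^{2\binom{k}{2}-\binom{j}{2}}$ (each of the $\binom{j}{2}$ edges inside $S\cap T$ is required only once) gives
\begin{equation}\label{ratioformulaprop}
\frac{E_n N_{n,k}^2}{(E_n N_{n,k})^2} = \sum_{j=0}^{k}\frac{\binom{k}{j}\binom{n-k}{k-j}}{\binom{n}{k}}\,p^{-\binom{j}{2}}.
\end{equation}
The $j=0$ summand equals $\binom{n-k}{k}/\binom{n}{k}=\prod_{i=0}^{k-1}(1-k/(n-i))\to 1$ since $k_n^2/n\to 0$. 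The \emph{main obstacle} is to show that the tail $\sum_{j=1}^{k}$ of \eqref{ratioformulaprop} is $o(1)$: each summand pits the favourable polynomial factor $\binom{k}{j}\binom{n-k}{k-j}/\binom{n}{k}\lesssim\binom{k}{j}(k/n)^j$ against the doubly-exponential blow-up $p^{-\binom{j}{2}}=(1/p)^{j(j-1)/2}$, and these two forces balance \emph{precisely} at $k\sim 2\log_{1/p}n$, which is our regime. My plan is to analyze the ratio of consecutive summands to establish unimodality in $j$, split the sum at $j\approx k/2$, and in each range exploit the assumption $k_n\le k^\sharp_n$ with $c>2$ to extract a geometric decay. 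Once $\sigma^2(N_{n,k_n})=o((E_n N_{n,k_n})^2)$ is in hand, \eqref{wlln} follows from \eqref{secondmommeth}, and \eqref{wtoinfty} is immediate: convergence in distribution of $N_{n,k_n}/E_nN_{n,k_n}$ to $1$ combined with $E_nN_{n,k_n}\to\infty$ forces $N_{n,k_n}\to\infty$ in probability.
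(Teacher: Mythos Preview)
Your proposal is correct and follows essentially the same route as the paper: write $N_{n,k}$ as a sum of clique indicators, obtain $E_nN_{n,k}=\binom{n}{k}p^{\binom{k}{2}}$, take $\log_{1/p}$ and extract via Stirling the leading term $(c-2)(\log_{1/p}n)\log^{(2)}_{1/p}n$, then invoke the first and second moment methods. The only cosmetic difference is that the paper handles general $k_n$ by reparameterizing $k_n=2\log_{1/p}n-c_n\log^{(2)}_{1/p}n$ with a variable $c_n\le c$ (resp.\ $c_n\ge c$) instead of computing at the fixed threshold $k^\sharp_n$ and invoking unimodality, and---like you---it only sets up the second-moment ratio without completing the tail estimate, deferring that to \cite{Pin14}.
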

Note that the claims in the sentence following \eqref{Exp0} are a
consequence of the first moment method along with the fact that $N_{n,k_n}$ is integer-valued.
Note also that \eqref{wtoinfty} follows immediately from \eqref{Exp1} and
\eqref{wlln}.
We will give the proofs of \eqref{Exp0} and \eqref{Exp1}; they are of course
first moment calculations.
The proof of \eqref{wlln} can be given by the second moment method,
but the calculations are much more involved than they were in the case of Theorem \ref{disconnectvertex}.
We will begin the second moment calculation so as to reveal the considerations that arise, and then refer the interested reader
elsewhere for  a complete proof.
\medskip

\noindent \it Proofs of \eqref{Exp0} and \eqref{Exp1}.\rm\
We will prove the two results simultaneously. For \eqref{Exp0},
fix $c<2$ and for each $n$ sufficiently large, let $c_n\le c$ be such that
$k_n:= 2\log_{\frac1p}n-c_n\log^{(2)}_\frac1pn$ is an integer.
For \eqref{Exp1}, fix $c>2$ and for each $n$ sufficiently large, let $c_n\ge c$ be
such that $k_n:= 2\log_{\frac1p}n-c_n\log^{(2)}_\frac1pn$ is an integer.
We need to show that \eqref{Exp0} holds
for the first choice of $k_n$ and that \eqref{Exp1} holds for the  second choice of $k_n$.

There are $\binom n{k_n}$ different subsets of $[n]$ of size $k_n$. For each such subset
$K$, let $I_K$ be the indicator random variable that is equal to 1 if the vertices in $K$ form a clique
in $G(n,p)$, and equal to 0 otherwise.
Then
$$
N_{n,k_n}=\sum_{K\subset[n],|K|=k_n}1_K.
$$
From the definition of the Erd\H{o}s-R\'enyi graph, we have
\begin{equation}\label{EK}
E_n1_K=p^{\binom{k_n}2},\ |K|=k_n.
\end{equation}
Thus,
$$
E_nN_{n,k_n}=\binom n{k_n}p^{\binom{k_n}2}.
$$
It is not hard to show that as long as $k_n=o(n^\frac12)$, then
\begin{equation}\label{approx-standard}
\binom n{k_n}\sim\frac{n^{k_n}}{k_n!}.
\end{equation}
From \eqref{EK} and \eqref{approx-standard} along  with Stirling's formula, we obtain
\begin{equation}\label{EnNnkn}
E_nN_{n,k_n}\sim\frac{n^{k_n}p^{\frac{k_n(k_n-1)}2}}{k_n^{k_n}e^{-k_n}\sqrt{2\pi k_n}}.
\end{equation}

Taking logarithms in base $\frac1p$ on both sides of \eqref{EnNnkn}, we have
\begin{equation}\label{afterlog}
\log_\frac1pE_nN_{n,k_n}\sim k_n\log_\frac1pn-\frac12k_n^2+\frac12k_n-k_n\log_\frac1pk_n+k_n\log_\frac1pe-\frac12\log_\frac1p2\pi k_n.
\end{equation}
We have
\begin{equation}\label{log1pkn}
\begin{aligned}
&\log_\frac1pk_n=\log_\frac1p\big(2\log_\frac1pn-c_n\log^{(2)}_\frac1pn\big)=
\log_\frac1p\Big((\log_\frac1pn)\big(2-\frac{c_n\log^{(2)}_\frac1pn}{\log_\frac1pn}\big)\Big)=\\
&\log^{(2)}_\frac1pn+\log_\frac1p\big(2-\frac{c_n\log^{(2)}_\frac1pn}{\log_\frac1pn}\big)=
\log^{(2)}_\frac1pn+O(1).
\end{aligned}
\end{equation}
The sum of the three terms of highest orders on the right hand side of \eqref{afterlog} is
$k_n\log_\frac1pn-\frac12k_n^2-k_n\log_\frac1pk_n$.
Using \eqref{log1pkn} to substitute for $\log_\frac1pk_n$, and using the definition of $k_n$ to substitute
for $k_n$ where it appears elsewhere in this three term sum,
we have
\begin{equation}\label{threeterms}
\begin{aligned}
&k_n\log_\frac1pn-\frac12k_n^2-k_n\log_\frac1pk_n=(2\log_\frac1pn-c_n\log^{(2)}_\frac1pn)\log_\frac1pn-\\
&\frac12\big(2\log_\frac1pn-c_n\log^{(2)}_\frac1pn\big)^2-(2\log_\frac1pn-c_n\log^{(2)}_\frac1pn)
(\log^{(2)}_\frac1pn+O(1))=\\
&(c_n-2)(\log_\frac1pn)\log^{(2)}_\frac1pn+O(\log_\frac1pn).
\end{aligned}
\end{equation}
The sum of the rest of the terms on the right hand side of \eqref{afterlog} satisfies
\begin{equation}\label{otherthreeterms}
\frac12k_n+k_n\log_\frac1pe-\frac12\log_\frac1p2\pi k_n=O(\log_\frac1pn).
\end{equation}

From \eqref{afterlog}, \eqref{threeterms} and \eqref{otherthreeterms},
in the case of  the first choice of $k_n$, for which $c_n\le c<2$, we have
$$
\lim_{n\to\infty}\log_\frac1pE_nN_{n,k_n}=-\infty,
$$
and thus, \eqref{Exp0} holds,
while in the case of the second choice of $k_n$, for which $c_n\ge c>2$, we have
$$
\lim_{n\to\infty}\log_\frac1pE_nN_{n,k_n}=\infty,
$$
and thus \eqref{Exp1} holds.
\hfill $\square$
\medskip

We now begin the second moment calculation to give an idea of what  is involved. Since $\sigma^2(N_{n,k_n})=E_nN^2_{n,k_n}-(E_nN_{n,k_n})^2$,
the requirement $\sigma^2(N_{n,k_n})=o((E_nN_{n,k_n})^2)$ that is needed for the  second moment method can be written
as
$$
E_n(N_{n,k_n})^2=(E_nN_{n,k_n})^2+o\big((E_nN_{n,k_n})^2\big).
$$
We label the subsets $K\subset[n]$ satisfying $|K|=k_n$ according to the vertices they contain.
Thus $K_{i_1,\cdots, i_{k_n}}$ denotes the subset $\{i_1,\cdots, i_{k_n}\}$.
Let $I_{K_{i_1,\cdots, i_{k_n}}}$ denote the indicator random variable that is equal to one if
the vertices in $K_{i_1,\cdots, i_{k_n}}$ form a clique in $G(n,p)$, and equal to zero elsewhere.
Then we can  write $N_{n,k_n}$ in the form
$$
N_{n,k_n}=\sum_{1\le i_1<i_2<\cdots<i_{k_n}\le n}I_{K_{i_1,\cdots, i_{k_n}}}.
$$
Consequently,
$$
E_nN^2_{n,k_n}=\sum_{\substack{1\le i_1<i_2<\cdots<i_{k_n}\le n\\1\le l_1<l_2<\cdots<l_{k_n}\le n}}E_n
I_{K_{i_1,\cdots, i_{k_n}}}I_{K_{l_1,\cdots, l_{k_n}}}.
$$
Note that $E_nI_{K_{i_1,\cdots, i_{k_n}}}I_{K_{l_1,\cdots, l_{k_n}}}$ equals 1 or 0 depending on
whether or not the graph $G(n,p)$ possesses an edge between every pair
of vertices in $K_{i_1,\cdots, i_{k_n}}$ and between every pair of vertices in $K_{l_1,\cdots, l_{k_n}}$.
It is easy to see that the expression
$\sum_{1\le l_1<l_2<\cdots<l_{k_n}\le n}E_n
I_{K_{i_1,\cdots, i_{k_n}}}I_{K_{l_1,\cdots, l_{k_n}}}$ is independent of the particular set
$K_{i_1,\cdots, i_{k_n}}$. Thus, choosing the set $K_{1,\cdots, k_n}$, we have
$$
E_nN^2_{n,k_n}=\binom n{k_n}
\sum_{1\le l_1<l_2<\cdots<l_{k_n}\le n}E_n
I_{K_{1,\cdots, k_n}}I_{K_{l_1,\cdots, l_{k_n}}}.
$$

Let $J=J(l_1,\cdots, l_{k_n})=|K_{1,\cdots, k_n}\cap K_{l_1,\cdots, l_{k_n}}|$ denote the number
of vertices shared by $K_{1,\cdots, k_n}$ and $K_{l_1,\cdots, l_{k_n}}$.
It is not hard to show that
$$
E_n
I_{K_{1,\cdots, k_n}}I_{K_{l_1,\cdots, l_{k_n}}}=\begin{cases}
p^{2\binom {k_n}2-\binom J2},\ \text{if}\ J=J(l_1,\cdots, l_{k_n})\ge2;\\
p^{2\binom {k_n}2},\ \text{if}\ J=J(l_1,\cdots, l_{k_n})\le 1.\end{cases}.
$$
Thus, we can write
$$
E_nN^2_{n,k_n}=\binom n{k_n}\sum_{\substack{1\le l_1<l_2<\cdots<l_{k_n}\le n\\ J(l_1,\cdots, l_{k_n})\le 1}}p^{2\binom{k_n}2}+
\binom n{k_n}\sum_{\substack{1\le l_1<l_2<\cdots<l_{k_n}\le n\\ J(l_1,\cdots, l_{k_n})\ge 2}}p^{2\binom{k_n}2-\binom J2}.
$$
It turns out that the first term on the right hand side above is equal to
$(E_nN_{n,k_n})^2+o\big((E_nN_{n,k_n})^2\big)$, while the second term is equal to
$o\big((E_nN_{n,k_n})^2\big)$. However the proofs of these statements require considerable additional calculation.
We refer the reader to \cite{Pin14}, and note that the notation there is a little different from the notation here.

\medskip

A deeper and more difficult result than the above ones is the identification and analysis of
a striking phase transition in the connectivity properties of $G(n,p_n)$ between the cases
$p=\frac cn$ with $c\in(0,1)$ and $p_n=\frac cn$ with $c>1$.
Note that by
Theorem \ref{connectivity}, for such values of $p_n$ the probability of  $G(n,p_n)$ being connected approaches zero
as $n\to\infty$. The phase transition that occurs for $p_n$ close to $\frac1n$ concerns the size of the largest
connected component, which makes a transition from logarithmic to linear as a function of $n$.
We only state the result, which was proved by
Erd\H{o}s and R\'enyi in 1960 \cite{ER}. For more recent proofs, see \cite{AS,Pin14}.
\begin{theorem}\label{ErRen}
\noindent i. Let $p_n=\frac cn$, with $c\in(0,1)$. Then there exists
a $\gamma=\gamma(c)$ such that the size $C_n^{\text{lg}}$ of the largest connected component  of $G(n,p_n)$
satisfies
$$
\lim_{n\to\infty}P_n(C_n^{\text{lg}}\le \gamma\log n)=1;
$$
\noindent ii. Let $p=\frac cn$ with $c>1$. Then there exists a unique solution
$\beta=\beta(c)\in(0,1)$ to the equation $1-e^{-cx}-x=0$. For any $\epsilon>0$, the size
$C_n^{\text{lg}}$ of the largest connected component of $G(n,p_n)$ satisfies
$$
\lim_{n\to\infty}P((1-\epsilon)\beta n\le C_n^{\text{lg}}\le (1+\epsilon)\beta n)=1.
$$
\end{theorem}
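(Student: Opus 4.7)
The plan for both parts is to exploit the close relationship between the exploration of a connected component in $G(n, p_n)$ and a Galton--Watson branching process with $\mathrm{Poisson}(c)$ offspring distribution. Specifically, starting from an arbitrary vertex $v$, perform a BFS-style exploration: maintain sets of active, explored, and neutral vertices; at each step, take an active vertex, reveal its edges to the neutral vertices (each present independently with probability $p_n$), and move newly discovered neighbors into the active set. The process terminates when no active vertices remain, and the number of explored vertices then equals $|C(v)|$, the size of the component of $v$. As long as the already-explored-plus-active set has size $o(n)$, the number of children revealed at each step is $\mathrm{Binomial}(n(1+o(1)), c/n)$, which by \eqref{PAB} is close to $\mathrm{Poisson}(c)$. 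The extinction probability $q$ of the $\mathrm{Poisson}(c)$ Galton--Watson process satisfies $q = e^{-c(1-q)}$; setting $\beta = 1-q$ this becomes $1 - e^{-c\beta} - \beta = 0$, which has the unique solution in $(0,1)$ asserted in the theorem when $c > 1$, and only the trivial solution $\beta = 0$ when $c \le 1$.

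For part (i) ($c < 1$), the branching process is subcritical, so its total progeny $T$ has an exponential tail: $P(T \geq k) \leq e^{-\alpha(c) k}$ for some $\alpha(c) > 0$. Coupling the exploration so as to be dominated by such a branching process yields $P_n(|C(v)| \geq k) \leq e^{-\alpha k}$ for every vertex $v$, and a union bound over $v \in [n]$ gives
$$
P_n(C_n^{\text{lg}} > \gamma \log n) \leq n \cdot e^{-\alpha \gamma \log n} = n^{1 - \alpha \gamma},
$$
which tends to $0$ provided $\gamma > 1/\alpha(c)$.

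For part (ii) ($c > 1$), the branching process is supercritical and survives with probability exactly $\beta \in (0,1)$. Let $Z_n$ denote the number of vertices $v$ whose component has size at least $k_n$, for an appropriately chosen threshold $k_n \to \infty$ with $k_n/n \to 0$. The branching-process coupling gives $P_n(|C(v)| \geq k_n) \to \beta$, hence $E_n Z_n \sim \beta n$. For the second moment, one estimates $P_n(|C(u)| \geq k_n,\ |C(v)| \geq k_n)$ by running two BFS explorations from $u$ and $v$ simultaneously: either they quickly merge (in which case $u$ and $v$ are in the same component) or they proceed essentially independently. This yields $\sigma^2(Z_n) = o((E_n Z_n)^2)$, so the second moment method \eqref{secondmommeth} gives $Z_n / (\beta n) \stackrel{\text{dist}}{\to} 1$. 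The final step is to show that the $(1+o(1)) \beta n$ vertices counted by $Z_n$ actually belong to a single component, so that $C_n^{\text{lg}} \sim \beta n$. This is typically done by a sprinkling argument: decompose $p_n = 1 - (1-p_n')(1-p_n'')$ with a small additional $p_n''$, run the above analysis on $G(n, p_n')$, and use the $p_n''$-edges to merge any two large components with overwhelming probability.

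The main obstacle is the second moment computation in (ii): making precise the intuition that two explorations proceed independently until they collide requires tracking the shrinking pool of neutral vertices and controlling the probability that the two explorations intersect. The complementary claim that components of intermediate size between $k_n$ and $\epsilon n$ are asymptotically absent, which is what allows the sprinkling step to merge everything into a single giant, is another nontrivial point requiring careful estimates. In contrast, part (i) is essentially immediate once the branching-process domination and its exponential tail bound are established.
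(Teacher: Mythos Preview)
The paper does not give a proof of this theorem at all: it explicitly says ``We only state the result, which was proved by Erd\H{o}s and R\'enyi in 1960 \cite{ER}. For more recent proofs, see \cite{AS,Pin14}.'' So there is nothing in the paper to compare your argument against.

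That said, your outline is the standard modern route (BFS exploration coupled to a Poisson$(c)$ Galton--Watson tree, union bound for the subcritical case, second moment plus sprinkling for the supercritical case), and it is essentially the approach taken in the references the paper cites. Your identification of the two delicate points in part~(ii)---the second-moment estimate for the count $Z_n$ of vertices in moderately large components, and the absence of components of intermediate size needed to make sprinkling work---is accurate; those are indeed where the real work lies. As a sketch the proposal is sound, though of course each of those steps requires several pages to execute carefully (for instance, the exponential tail $P(T\ge k)\le e^{-\alpha k}$ for subcritical total progeny, and the stochastic domination of the exploration by the branching process, both need to be justified rather than asserted).
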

A detailed analysis of the largest connected component in the case that $p_n=\frac1n+o(\frac1n)$ can be found
in \cite{Boll84}.

\section{Arcsine law for random walks and a combinatorial lemma of Sparre-Andersen}
Consider the simple symmetric random walk (SSRW) $\{S_n^\pm\}_{n=0}^\infty$ on $\mathbb{Z}$.
It is constructed from a sequence $\{X^{\pm}_n\}_{n=1}^\infty$ of IID  random variables with
the following Bernoulli distribution with parameter $\frac12$: $P(X^{\pm}_n=1)=P(X^{\pm}_n=-1)=\frac12$.
One defines $S_0^\pm=0$ and $S_n^\pm=\sum_{j=1}^n X^{\pm}_j$, \ $n\ge1$.
It is well-known that the SSRW is \it recurrent \cite{Durrett}, \cite{Pin14}\rm:
$$
P(\liminf_{n\to\infty} S_n^\pm=-\infty\ \text{and}\ \limsup_{n\to\infty}S_n^\pm=+\infty)=1.
$$
Let
$$
A_{+,n}=A_{+,n}(\{S_j^\pm\}_{j=1}^n)=|\{j\in[n]: S_j^\pm>0\}|
$$
denote   the number of steps  up until step $n$ that the SSRW spends on the  positive half axis.
If one wants to consider things completely symmetrically, then one can define instead
$$
 A^{\text{sym}}_{+,n}= A^{\text{sym}}_{+,n}(\{S_j^\pm\}_{j=1}^n)=|\{j\in[n]: S_j^\pm>0\ \text{or}\ S_j^\pm=0\ \text{and}\ S_{j-1}^\pm>0\}|.
$$
There are several ways, some more probabilistic and some more combinatoric, to prove the following result.
\begin{theorem}
For SSRW,
\begin{equation}\label{arcsinn}
P( A^{\text{\rm sym}}_{+,2n}=2k)=\frac{\binom{2k}k\binom{2n-2k}{n-k}}{2^{2n}},\ k=0,1,\cdots, n.
\end{equation}
\end{theorem}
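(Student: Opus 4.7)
The plan is to prove the identity by induction on $n$, using the first-return-to-zero decomposition of the walk. Let $u_{2m} = P(S_{2m}^\pm = 0) = \binom{2m}{m}/2^{2m}$ and let $f_{2j}$ denote the probability that the first return to $0$ occurs at time $2j$. The claim to be shown is $p_n(k) := P(A^{\text{sym}}_{+,2n} = 2k) = u_{2k} u_{2n-2k}$.

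The first step is a \textbf{key lemma}: the probability that $S_j^\pm \ne 0$ for all $j \in [1, 2n]$ equals $u_{2n}$. I would establish this via the identity $f_{2n} = u_{2n-2} - u_{2n}$ for $n \ge 1$. The cleanest route uses generating functions: writing $U(z) := \sum_{n \ge 0} u_{2n} z^n$ and $F(z) := \sum_{n \ge 1} f_{2n} z^n$, the renewal relation $U = 1 + FU$ combined with $U(z) = (1-z)^{-1/2}$ gives $F(z) = 1 - \sqrt{1-z}$, so $1 - F(z) = (1-z) U(z)$; reading off coefficients yields $f_{2n} = u_{2n-2} - u_{2n}$. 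The lemma then follows by telescoping: $P(T_1 > 2n) = 1 - \sum_{j=1}^n (u_{2j-2} - u_{2j}) = u_{2n}$, where $T_1$ is the first return time.

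Next, I would set up a \textbf{recursion} by conditioning on $T_1$. If $T_1 = 2j \le 2n$, then by symmetry the first excursion is entirely above the axis with probability $\frac{1}{2}$ (contributing $2j$ to $A^{\text{sym}}$) or entirely below with probability $\frac{1}{2}$ (contributing $0$); after time $2j$ the Markov property restarts an independent walk of length $2n - 2j$. If $T_1 > 2n$, the walk stays on one side throughout, so $A^{\text{sym}} \in \{0, 2n\}$ each with probability $\frac{1}{2} u_{2n}$ by the key lemma. This yields
\begin{equation*}
p_n(k) = \frac{1}{2} \sum_{j=1}^{k} f_{2j}\, p_{n-j}(k-j) + \frac{1}{2} \sum_{j=1}^{n-k} f_{2j}\, p_{n-j}(k) + \frac{1}{2} u_{2n}\bigl(\mathbf{1}_{k=0} + \mathbf{1}_{k=n}\bigr).
\end{equation*}

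The \textbf{induction} on $n$ now finishes the job. The base case $n = 0$ gives $p_0(0) = 1 = u_0^2$. For $0 < k < n$, substituting $p_{n-j}(k-j) = u_{2k-2j} u_{2n-2k}$ and $p_{n-j}(k) = u_{2k} u_{2n-2k-2j}$ converts the two sums into $\frac{1}{2} u_{2n-2k} \sum_{j=1}^k f_{2j} u_{2k-2j}$ and $\frac{1}{2} u_{2k} \sum_{j=1}^{n-k} f_{2j} u_{2n-2k-2j}$, each of which equals $\frac{1}{2} u_{2k} u_{2n-2k}$ by the standard renewal identity $u_{2m} = \sum_{j=1}^m f_{2j} u_{2m-2j}$ (also a first-return decomposition). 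The boundary cases $k \in \{0, n\}$ work the same way, with the extra $\frac{1}{2} u_{2n}$ term supplying exactly the missing half so that $p_n(0) = u_0 u_{2n}$ and $p_n(n) = u_{2n} u_0$. The \textbf{main obstacle} is the key lemma: the identity $f_{2n} = u_{2n-2} - u_{2n}$ does not admit a completely transparent elementary derivation and requires either the generating-function argument sketched above or a reflection/cycle-lemma bijection; once it is in hand, the rest of the proof is mechanical.
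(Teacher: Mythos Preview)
Your proof is correct. The first-return decomposition, the key lemma $P(T_1>2n)=u_{2n}$ via $f_{2n}=u_{2n-2}-u_{2n}$, the recursion for $p_n(k)$, and the induction closing via the renewal identity $u_{2m}=\sum_{j=1}^m f_{2j}u_{2m-2j}$ are all sound, and you handled the boundary cases $k\in\{0,n\}$ properly.

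As for comparison: the paper does not actually supply its own proof of this statement. It simply states the result and refers the reader to Feller's Volume~I for ``a somewhat probabilistic approach'' and to \cite{Pin14} for ``a completely combinatorial approach using Dyck paths.'' Your argument is precisely the Feller first-passage decomposition, so you have reconstructed the first of the two approaches the paper points to. The alternative the paper mentions would avoid generating functions entirely by exhibiting a bijection between the $2n$-step paths with $A^{\text{sym}}_{+,2n}=2k$ and pairs of nonnegative (Dyck-type) paths of lengths $2k$ and $2n-2k$; that route is more elementary in that it sidesteps the identity $f_{2n}=u_{2n-2}-u_{2n}$ you flagged as the main obstacle, at the cost of constructing an explicit bijection.
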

\noindent See, for example, \cite{Feller1} for a somewhat probabilistic approach and see \cite{Pin14} for a completely combinatorial approach using Dyck paths.
Stirling's formula
gives
$$
\frac{\binom{2k}k\binom{2n-2k}{n-k}}{2^{2n}}\sim\frac1\pi\frac1{\sqrt{k(n-k)}},
$$
and for any $\epsilon\in(0,1)$,  the above estimate is uniform for $k\in[\epsilon n,(1-\epsilon)n]$ as $n\to\infty$.
Using this estimate  with \eqref{arcsinn}, a  straightforward calculation \cite{Pin14} reveals that
\begin{equation}\label{arcsinlim}
\lim_{n\to\infty}P(\frac1{2n}A^{\text{\rm sym}}_{+,2n}\le x)=\frac2\pi\arcsin\sqrt x, \ x\in[0,1].
\end{equation}
Of course it then follows that \eqref{arcsinlim} also holds with $n$ in place of $2n$.
It is not hard to show that the proportion of time up to $n$ that the SSRW is equal to zero converges in distribution to 0.
See, for example, \cite[Exercise 4.3]{Pin14}.
Thus,
 we also have
 \begin{equation}\label{arcsinlimagain}
\lim_{n\to\infty}P(\frac1nA_{+,n}\le x)=\frac2\pi\arcsin\sqrt x, \ x\in[0,1].
\end{equation}
The distribution whose distribution function is $\frac2\pi\arcsin\sqrt x$, for  $x\in[0,1]$, is called the \it Arcsine distribution.\rm\
 The density of this distribution is given by
 $\frac{d}{dx}\big(\frac2\pi\arcsin\sqrt x)\big)=\frac1\pi\frac1{\sqrt{x(1-x)}}$.
We  note that in \eqref{arcsinn}, the most likely values of the distribution are $k=0$ and $k=n$, and the least likely values are $k=\frac n2$, if $n$ is even,  and $k=\lfloor \frac n2\rfloor\pm1$, if $n$ is odd.
 In the limit, the resulting  density is unbounded at $0^+$ and $1^-$, and attains its minimum  at $x=\frac12$.
 Thus,  counter-intuitively, it turns out that   the most likely proportions of time that  the SSRW is on the positive half axis are 0 and 1, and the least likely proportion
 is $\frac12$. For a discussion of this, see \cite{Feller1}.

 Now consider  a general symmetric random walk on $\mathbb{R}$. It is constructed from an IID sequence of random variables $\{X_n\}_{n=1}^\infty$ with an arbitrary
 symmetric distribution: $P(X_n\ge x)=P(X_n\le -x)$, for $x\ge0$.
  We define $\mathcal{S}_0=0$ and $\mathcal{S}_n=\sum_{j=1}^nX_j,\ n\ge1$.
 We will assume that the distribution is continuous; that is
 $P(X_n=x)=0$, for all $x\in\mathbb{R}$. It then follows that $P(\mathcal{S}_n=x)=0$, for all $x\in\mathbb{R}$
 and $n\in\mathbb{N}$.
 %Therefore $A^{\text{sym}}_{+,n}(\{\mathcal{S}_j\}_{j=1}^n)=A_{+,n}(\{\mathcal{S}_j\}_{j=1}^n)$ a.s.
 In the sequel we will  use the fact that $P(\mathcal{S}_n>0)=1-P(\mathcal{S}_n<0)$.
Here is a truly remarkable result concerning $A_{+,n}=A_{+,n}(\{\mathcal{S}_j\}_{j=1}^n)$:
\begin{theorem}\label{Sparre}
For any symmetric random walk $\{\mathcal{S}_n\}_{n=0}^\infty$ generated from a continuous distribution,
\begin{equation}\label{SAndersen}
P(A_{+,n}=k)=\frac{\binom{2k}k\binom{2n-2k}{n-k}}{2^{2n}},\ k=0,1,\cdots, n.
\end{equation}
\end{theorem}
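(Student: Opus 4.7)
\emph{Step 1 (Combinatorial Lemma).} The plan is to reduce the theorem to a combinatorial identity about permutations, then factor and solve a convolution. The key fact I would invoke is the Sparre--Andersen combinatorial lemma: for any real sequence $x_1,\ldots,x_n$ whose $n+1$ partial sums $s_0=0,s_1,\ldots,s_n$ are distinct, the number of $\pi\in S_n$ for which exactly $k$ of the rearranged partial sums $x_{\pi(1)}+\cdots+x_{\pi(j)}$ ($1\le j\le n$) are positive equals the number of $\pi\in S_n$ for which the maximum of $0,\,x_{\pi(1)},\,x_{\pi(1)}+x_{\pi(2)},\ldots,\,x_{\pi(1)}+\cdots+x_{\pi(n)}$ is attained at index $k$. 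Since the increments $X_1,\ldots,X_n$ are IID and continuously distributed, the partial sums are almost surely distinct, and conditional on the unordered multiset $\{X_1,\ldots,X_n\}$ their order is uniform. Averaging the lemma over this conditioning yields $P(A_{+,n}=k)=P(M_n=k)$, where $M_n\in\{0,\ldots,n\}$ denotes the index at which $S_0,\ldots,S_n$ attains its maximum.

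\emph{Step 2 (Factoring $P(M_n=k)$).} Almost surely, $\{M_n=k\}$ is the intersection of $\{S_k>S_j\text{ for all }j<k\}$ and $\{S_k>S_j\text{ for all }j>k\}$, and these two events are independent because they are measurable with respect to $\sigma(X_1,\ldots,X_k)$ and $\sigma(X_{k+1},\ldots,X_n)$ respectively. Reversing the order $(X_1,\ldots,X_k)\mapsto(X_k,\ldots,X_1)$---which preserves the joint law by exchangeability---turns the first event into $\{S_1>0,\ldots,S_k>0\}$, with probability $q_k:=P(S_1>0,\ldots,S_k>0)$. For the second event, the continuous symmetry $X\stackrel{d}{=}-X$ lets me replace $X_{k+1},\ldots,X_n$ by their negatives, converting ``$S_j<S_k$ for all $j>k$'' into a positivity statement about the partial sums of $-X_{k+1},\ldots,-X_n$, which has probability $q_{n-k}$. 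Thus $P(A_{+,n}=k)=q_kq_{n-k}$.

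\emph{Step 3 (Convolution and generating functions).} Summing over $k$ yields $\sum_{k=0}^n q_kq_{n-k}=1$ for every $n\ge 0$, with the convention $q_0=1$. Hence the generating function $Q(z):=\sum_{n\ge 0}q_nz^n$ satisfies $Q(z)^2=(1-z)^{-1}$, so $Q(z)=(1-z)^{-1/2}=\sum_{n\ge 0}\binom{2n}{n}(z/4)^n$. Reading off coefficients gives $q_n=\binom{2n}{n}/2^{2n}$, and substituting into $P(A_{+,n}=k)=q_kq_{n-k}$ produces the asserted formula $\binom{2k}{k}\binom{2n-2k}{n-k}/2^{2n}$.

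\emph{Main obstacle.} The crux is the Sparre--Andersen combinatorial lemma in Step 1; Steps 2 and 3 are essentially routine probabilistic and analytic manipulations. Proving the lemma calls for an explicit bijection---or an inductive cyclic-shift argument---matching permutations by ``number of positive partial sums'' to those by ``position of the running maximum,'' and it is this purely combinatorial identity, rather than any probabilistic subtlety, that accounts for the striking independence of the answer from the particular continuous symmetric distribution.
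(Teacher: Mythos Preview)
Your proposal is correct and follows essentially the same route as the paper: reduce $P(A_{+,n}=k)$ to $P(M_n=k)$ via the Sparre--Andersen Combinatorial Lemma, factor $\{M_n=k\}$ into independent pre- and post-$k$ events with probabilities $q_k$ and $q_{n-k}$ (using exchangeability and the symmetry $X\stackrel{d}{=}-X$), and then determine $q_n=\binom{2n}{n}/2^{2n}$ from $Q(z)^2=(1-z)^{-1}$. The paper merely reverses the order of presentation---it first computes $P(M_n=k)$ (your Steps 2--3, its Theorem \ref{Ln}) and then applies the Combinatorial Lemma---and it proves the lemma by induction on $n$ rather than citing it; otherwise the arguments coincide.
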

Thus, \eqref{arcsinlimagain} not only holds for the SSRW on $\mathbb{Z}$, but also for  general symmetric random walks on $\mathbb{R}$.
 Theorem \ref{Sparre} was first proved by Sparre-Andersen in 1949 \cite{Sparre-A}. The very complicated original proof has since been very much simplified by combinatorial arguments.
 This simplification  is an example par excellence of the use of combinatorial tools to prove probabilistic results.
  Our exposition is a streamlined version of the proof in \cite{Feller2}. For a somewhat different approach, see
  \cite{Kallenberg02}.

A combinatorial lemma will show that Theorem \ref{Sparre} is equivalent to the following result, of independent interest.
\begin{theorem}\label{Ln}
Let $\{\mathcal{S}_n\}_{n=0}^\infty$ be a symmetric random walk on $\mathbb{R}$ generated by a continuous distribution.
Let $L_n$ denote the index (that is, location) of the first maximum of $\{\mathcal{S}_0,\mathcal{S}_1,\cdots, \mathcal{S}_n)$. (Actually, by the continuity assumption, the maximum almost surely
occurs at only one location.)
Then
\begin{equation}\label{SAndersenagain}
P(L_n=k)=\frac{\binom{2k}k\binom{2n-2k}{n-k}}{2^{2n}},\ k=0,1,\cdots, n.
\end{equation}
\end{theorem}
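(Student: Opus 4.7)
My plan is to establish the factorization
$$P(L_n=k)=p_k\,p_{n-k},\qquad 0\le k\le n,$$
where $p_m:=P(\mathcal{S}_j>0\text{ for all }1\le j\le m)$ (with $p_0:=1$), and then to identify $p_m=\binom{2m}{m}/4^m$ from the generating-function identity that this factorization forces. Substituting back then yields the claimed formula.

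For the factorization, I would first observe that continuity of the increment distribution gives $P(\mathcal{S}_i=\mathcal{S}_j)=0$ for all $i\ne j$, so almost surely
$$\{L_n=k\}=\{\mathcal{S}_k>\mathcal{S}_j\text{ for }0\le j<k\}\cap\{\mathcal{S}_k>\mathcal{S}_j\text{ for }k<j\le n\}.$$
The first event, rewritten via $\mathcal{S}_k-\mathcal{S}_{k-i}=X_{k-i+1}+\cdots+X_k$ and the reversal $Y_i:=X_{k-i+1}$, says that all partial sums of the IID sequence $(Y_1,\dots,Y_k)$ are strictly positive, which has probability $p_k$ since $(Y_1,\dots,Y_k)\stackrel{d}{=}(X_1,\dots,X_k)$. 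The second event, rewritten via the shifted increments $W_i:=X_{k+i}$, says $W_1+\cdots+W_i<0$ for $1\le i\le n-k$; the symmetry of the increment distribution ($W_i\stackrel{d}{=}-W_i$) turns this probability into $p_{n-k}$. The two events depend on disjoint blocks of the IID sequence, hence are independent, giving the factorization.

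For the identification of $p_m$, summing the factorization over $k\in\{0,\dots,n\}$ produces the convolution identity $\sum_{k=0}^n p_kp_{n-k}=1$ for every $n\ge0$. Setting $P(z):=\sum_{m\ge0}p_mz^m$ this reads $P(z)^2=1/(1-z)$, so $P(z)=(1-z)^{-1/2}$. Newton's generalized binomial theorem expands this series as $\sum_m\binom{2m}{m}(z/4)^m$, giving $p_m=\binom{2m}{m}/4^m$, as required.

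The main substantive step is the factorization, which rests on two independent uses of the IID/symmetry structure: a pathwise time-reversal on the first block of $k$ increments, followed by a sign-flip by symmetry on the second block of $n-k$ increments. Continuity of the increment distribution is used in an essential but light way, to assert that the maximum is almost surely uniquely attained, so that the ``first maximum'' is strict on both sides and the two events may be written with strict inequalities without ambiguity. Once the factorization is in hand, extracting $p_m$ is a one-line generating-function calculation. A pleasant feature of this route is that it produces Theorem~\ref{Ln} with no appeal to the combinatorial lemma; that lemma can then be invoked afterwards to deduce Theorem~\ref{Sparre} as a corollary.
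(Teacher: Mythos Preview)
Your proof is correct and follows essentially the same route as the paper: decompose $\{L_n=k\}$ into two independent events depending on disjoint blocks of increments, identify each with the probability $q_k=P(\mathcal{S}_1>0,\dots,\mathcal{S}_k>0)$ via time-reversal (for the past) and symmetry (for the future), then use the convolution identity $\sum_{k=0}^n q_kq_{n-k}=1$ to get the generating function $(1-s)^{-1/2}$ and hence $q_m=\binom{2m}{m}/4^m$. Your write-up is in fact slightly more explicit than the paper's about the reversal and sign-flip steps, but the argument is the same.
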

\bf \noindent Remark.\rm\ Thus,
$$
\lim_{n\to\infty}P(\frac1nL_n\le x)=\frac2\pi\arcsin\sqrt x, \ x\in[0,1].
$$

\noindent \it Proof of Theorem \ref{Ln}.\rm\ We write the event
$\{L_n=k\}$
as
$$
\{L_n=k\}=A_k\cap B_{n-k}, \ k=0,\cdots, n,
$$
where
$$
A_k=\{\mathcal{S}_k>\mathcal{S}_0,\cdots, \mathcal{S}_k>\mathcal{S}_{k-1}\}, \ k=1,\cdots, n
$$
and
$$
B_{n-k}=\{\mathcal{S}_k\ge\mathcal{S}_{k+1},\cdots, \mathcal{S}_k\ge\mathcal{S}_n\},\ k=0,\cdots, n-1,
$$
and $A_0$ and $B_0$ are equal to the entire probability space, so $P(A_0)=P(B_0)=1$.
The events $A_k$ and $B_{n-k}$ are independent since
$A_k$ depends only on $\{X_j\}_{j=1}^k$ and $B_{n-k}$ depends only on $\{X_j\}_{j=k+1}^n$.
Thus,
\begin{equation}\label{ABL}
P(L_n=k)=P(A_k)P(B_{n-k}),\ k=0,\cdots, n.
\end{equation}
From the equidistribution of the $\{X_n\}_{n=1}^\infty$ and the continuity and symmetry of the distribution,
 it follows that
\begin{equation}\label{q}
\begin{aligned}
&P(A_k)=q_k,\ k=1,\cdots, n;\\
&P(B_{n-k})=q_{n-k},\  k=0,\cdots, n-1,
\end{aligned}
\end{equation}
where
\begin{equation}\label{qn}
q_n=P(\mathcal{S}_1>0,\cdots, \mathcal{S}_n>0),\ n=1,2,\cdots.
\end{equation}

Define $q_0=1$ and let
 $H(s)$ be the generating function for $\{q_n\}_{n=0}^\infty$:
$$
H(s)=\sum_{n=0}^\infty q_ns^n.
$$
From \eqref{ABL} and \eqref{q} we have
$$
1=\sum_{k=0}^nP(L_n=k)=\sum_{k=0}^nq_kq_{n-k},
$$
from which we obtain
$H^2(s)=(1-s)^{-1}$.
Thus,
$$
H(s)=(1-s)^{-\frac12}.
$$
One has
$$
\frac{d^nH}{ds^n}(0)=\frac{(2n)!}{n!2^{2n}},\ n\ge0.
$$
Thus,
\begin{equation}\label{qnform}
q_n=\frac1{2^{2n}}\binom {2n}n,\ n\ge0.
\end{equation}
 Theorem \ref{Ln} follows from \eqref{ABL}, \eqref{q} and \eqref{qnform}.\hfill $\square$

\medskip

From  \eqref{qn} and\eqref{qnform} we obtain the
following  corollary of interest, which is not needed for the proof of
Theorem \ref{Sparre}.
\begin{corollary}\label{corrsparre}
\begin{equation}
P(\mathcal{S}_1>0,\cdots, \mathcal{S}_n>0)=\frac1{2^{2n}}\binom {2n}n,\ n\ge1.
\end{equation}
\end{corollary}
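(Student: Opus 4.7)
The plan is very short: the corollary is nothing but a restatement of two equations already derived within the proof of Theorem \ref{Ln}. Specifically, in that proof we introduced the quantity $q_n = P(\mathcal{S}_1>0,\cdots,\mathcal{S}_n>0)$ in \eqref{qn}, and subsequently derived the closed form $q_n = \frac{1}{2^{2n}}\binom{2n}{n}$ in \eqref{qnform}. So I would simply cite these two equations.

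For clarity, let me recall the chain of reasoning that produced \eqref{qnform}, since the corollary stands on this alone. First, the decomposition $\{L_n = k\} = A_k \cap B_{n-k}$ together with the independence of $A_k$ and $B_{n-k}$ (they depend on disjoint blocks of increments) gives $P(L_n = k) = P(A_k)P(B_{n-k})$. The symmetry and continuity of the step distribution, together with identical distribution of the $X_j$'s, identify $P(A_k) = q_k$ and $P(B_{n-k}) = q_{n-k}$. Summing over $k$ and using that $\sum_k P(L_n=k) = 1$ yields the convolution identity $\sum_{k=0}^n q_k q_{n-k} = 1$, which in generating-function form reads $H(s)^2 = (1-s)^{-1}$, hence $H(s) = (1-s)^{-1/2}$. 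Extracting coefficients from the binomial series for $(1-s)^{-1/2}$ yields \eqref{qnform}.

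Thus the whole plan for the corollary is: invoke \eqref{qn} to identify the left-hand side of the claim with $q_n$, and invoke \eqref{qnform} to identify $q_n$ with $\frac{1}{2^{2n}}\binom{2n}{n}$. There is no real obstacle here; the content of the corollary was proved en route to Theorem \ref{Ln}, and the corollary merely highlights a byproduct that may not have been apparent from the statement of Theorem \ref{Ln} alone. The one thing to be careful about is noting that the derivation in the proof of Theorem \ref{Ln} did not circularly depend on Corollary \ref{corrsparre} itself (indeed it did not—it only used the probabilistic independence argument and the generating-function identity), so the corollary follows without further work.
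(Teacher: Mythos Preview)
Your proposal is correct and matches the paper's approach exactly: the paper simply states that the corollary follows from \eqref{qn} and \eqref{qnform}, which is precisely what you do. Your additional recap of how \eqref{qnform} was derived is accurate and not required, but does no harm.
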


We now turn to the  combinatorial lemma.
Let $n\in\mathbb{N}$, let
 $\{x_j\}_{j=1}^n$ be real numbers,  denote their partial sums by $\{s_j\}_{j=1}^n$,
 and let $s_0=0$.
 We say that the first maximum among the partial sums occurs at $j\in\{0,\cdots, n\}$ if
 $s_j>s_i$, for $i=0,\cdots, j-1$, and $s_j\ge s_i$, for $i=j+1,\cdots, n$.
 For each permutation $\sigma\in S_n$, let $s^\sigma_0=0$ and
  $s^\sigma_j=\sum_{i=1}^jx_{\sigma_i}$, \  $j=1,\cdots, n$,
 denote the partial sums of the permuted sequence $\{x_{\sigma_j}\}_{j=1}^n$.
\begin{Clemma}
\it Let $r$ be an integer satisfying $0\le r\le n$. The number $A_r$ of permutations $\sigma\in S_n$ for
which exactly $r$ of the partial sums are strictly positive is equal to the number $B_r$  of permutations
$\sigma\in S_n$ for which the first maximum among these partial sums occurs at
 position $r$.
\end{Clemma}\rm\
\begin{proof}
We prove the lemma by induction. The result is true for $n=1$. Indeed, if $x_1>0$, then $A_1=B_1=1$ and
$A_0=B_0=0$, while if $x_1\le 0$, then $A_1=B_1=0$ and $A_0=B_0=1$.  Now let $n\ge2$ and assume that the result is true for
$n-1$. Denote by $A^{(k)}_r$ and $B^{(k)}_r$ the values corresponding to $A_r$ and $B_r$ when the $n$-tuple
$(x_1,\cdots, x_n)$ is replaced by the $(n-1)$-tuple obtained by deleting $x_k$. By the inductive hypothesis,
$A^{(k)}_r=B^{(k)}_r$, for $k=1,\cdots, n$ and $r=0,\cdots, n-1$. This is also true for $r=n$ since trivially
$A^{(k)}_n=B^{(k)}_n=0$.

We break up the rest of the proof into two cases. The first case is when $\sum_{k=1}^nx_k\le 0$.
We construct the $n!$ permutations of $(x_1,\cdots, x_n)$ by first selecting $k\in[n]$ and placing $x_k$ in the
last position, and then permuting the remaining $n-1$ numbers in the first $n-1$ positions. Since the
$n$th partial sum for every permutation is equal to $\sum_{k=1}^nx_k$, which is non-positive, it is clear that
the number of positive partial sums and the index of the first maximal partial sum depend only on the numbers
in the first $n-1$ positions.  Thus, $A_r=\sum_{k=1}^nA^{(k)}_r$ and $B_r=\sum_{k=1}^n B^{(k)}_r$, and consequently,
$A_r=B_r$ by the  inductive hypothesis.

The second case is when $\sum_{k=1}^nx_k>0$. In this case, the $n$th partial sum is positive, and thus the previous
argument shows that $A_r=\sum_{k=1}^n A^{(k)}_{r-1}$, for $r=1,\cdots, n$, and $A_0=0$.
To obtain an analogous formula for $B_r$, construct the $n!$ permutations of $(x_1,\cdots, n_n)$ by first
selecting $k\in[n]$ and placing $x_k$ in
the first position, and then permuting the remaining $n-1$ numbers in the last $n-1$ positions. Such
a permutation is of the form $(x_k,x_{j_1},\cdots, x_{j_{n-1}})$.
Since the $n$th partial sum is positive for every permutation, it follows that $s^\sigma_0=0$ is not a maximal partial sum for any permutation $\sigma$, so $B_0=0$. Thus, $A_0=B_0$. Clearly, the first maximal partial sum for the
permuted order $(x_k,x_{j_1},\cdots, x_{j_{n-1}})$ occurs at index $r\in[n]$ if and only if the first maximum of the partial sums of $(x_{j_1},\cdots, x_{j_{n-1}})$ occurs at $r-1$. Thus, $B_r=\sum_{k=1}^nB^{(k)}_{r-1}$,
for $r=1,\cdots, n$.  By the inductive hypothesis, it then follows that $A_r=B_r$.
\end{proof}
We now prove  Theorem \ref{Sparre} using  the  Combinatorial Lemma and  Theorem
\ref{Ln}.

\noindent \it Proof of Theorem \ref{Sparre}.\rm\
For $n\in\mathbb{N}$, label the permutations in $S_n$ from 1 to $n!$, with the first one being the identity permutation.
Consider the random variables $\{X_k\}_{k=1}^n$ and their partial sums $\{\mathcal{S}_k\}_{k=1}^n$.
Denote the partial sums for the $j$th permutation of $\{X_k\}_{k=1}^n$
 by $\{\mathcal{S}^{(j)}_k\}_{k=1}^n$; thus,
$\mathcal{S}^{(1)}_k=\mathcal{S}_k$.
For a fixed integer $r$ satisfying $0\le r\le n$, let $Z^{(j)}_{n,r}$ be the indicator random variable
equal to 1 if permutation number $j$ has exactly $r$ positive sums, and equal to 0 otherwise.
By symmetry, the random variables $\{Z^{(j)}_{n,r}\}_{j=1}^{n!}$ have the same distribution.
Thus, we have
\begin{equation}\label{Zs}
P(A_{+,n}=r)=P(Z^{(1)}_{n,r}=1)=EZ^{(1)}_{n,r}=
\frac1{n!}E\sum_{k=1}^n Z^{(k)}_{n,r}.
\end{equation}
Similarly, let $W^{(j)}_{n,r}$ be the indicator random variable
equal to 1 if for permutation number $j$, the first maximal partial sum has index $r$,  and equal to 0 otherwise.
By symmetry, the random variables $\{W^{(j)}_{n,r}\}_{j=1}^{n!}$ have the same distribution.
Thus, we have
\begin{equation}\label{Ws}
\begin{aligned}
P(L_n=r)=P(W^{(1)}_{n,r}=1)=EW^{(1)}_{n,r}=\frac1{n!}E\sum_{k=1}^n W^{(k)}_{n,r},
\end{aligned}
\end{equation}
where $L_n$ is as in Theorem \ref{Ln}.
By the Combinatorial Lemma,
$\sum_{k=1}^n Z^{(k)}_{n,r}=\sum_{k=1}^n W^{(k)}_{n,r}$, for every realization
of $\{X_j\}_{j=1}^n$. Thus, the right hand sides of \eqref{Zs} and \eqref{Ws} are equal.
Thus $P(A_{+,n}=r)=P(L_n=r), \ r=0,\cdots, n$, which in conjunction with Theorem \ref{Ln} proves
Theorem \ref{Sparre}.\hfill $\square$
\medskip

The material in this section is one of the cornerstones of a larger subject which includes
 path decomposition and the Wiener-Hopf factorization  for random walks and for L\'evy processes.
 See \cite{Feller1} for some basic material on Wiener-Hopf factorization. For more modern and more comprehensive
 work, see for example \cite{Doney,Kypri,PitmanTang} and references therein.
The exploitation of symmetries, of which the proofs of Theorems \ref{Sparre} and \ref{Ln}   are  examples, is an important recurring theme in probability theory; see for example \cite{Kallenberg05}.

\section{The probabilistic method}

I cannot imagine a survey of the bridge between combinatorics and probability that does not include
the probabilistic method,  popularized  by Erd\H{o}s beginning  in the 1940s.
The method gives only a one-sided bound, and that bound is  usually not tight,
yet frequently   it is the only known method to yield a non-trivial result.
Almost every combinatorist is familiar with Erd\H{o}s' use of the method to get a lower bound
on the Ramsey number $R(s,s)$.
A small tweaking of the most basic use of the method (\cite{AS}, \cite{Pin14}) gives the lower bound
$R(k,k)\ge(1+o(1))\frac k{\sqrt2e}2^{\frac k2}$, leaving a wide gap with the best known upper bound
which is around $4^k$.
We  illustrate the probabilistic method with two other examples, one entirely straightforward
and simplistic, the other a bit more creative.
For much more on the probabilistic method, see \cite{AS}.
\medskip

\noindent \bf Two-colorings with  no monochromatic sets.\rm\
Consider $N$ sets of objects, not necessarily disjoint,
 each of size $k$, with $N,k\ge2$. Then the total number of distinct objects in the union of the sets is at least $k$ and at most $Nk$.
 Given $k$, how large can $N$ be so that no matter what the configuration of the objects, it is always possible to assign one of two colors to each object in such a way
 that none of the   $N$ sets is monochromatic?

\noindent \it Example.\rm\ Let $N=3$ and $k=2$. Consider three objects, labelled 1,2 and 3, and define  three sets
of size two by $\{1,2\},\{1,3\},\{2,3\}$. Then every two-coloring will produce a monochromatic set.

\begin{proposition}
Consider $N$ sets of not necessarily disjoint objects, where each set contains $k$ objects. If
$$
N(\frac12)^{k-1}<1,
$$
then it is always possible to choose a two-coloring of the objects in such a way that none of the $N$ sets is monochromatic.
\end{proposition}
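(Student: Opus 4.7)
The plan is to use the most basic incarnation of the probabilistic method: rather than constructing a good two-coloring directly, one shows that a uniformly random coloring already avoids monochromatic sets with positive probability, so at least one such coloring must exist.

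More concretely, I would proceed as follows. Enumerate the distinct objects in the union of the $N$ given sets, and color each of them independently red or blue, each with probability $\tfrac12$, on some probability space $(\Omega,\mathcal{F},P)$. For each $i\in[N]$, let $A_i$ be the event that the $i$th set is monochromatic. Since the $k$ objects in a fixed set are colored independently, the probability that they are all red is $(\tfrac12)^k$, and similarly for all blue, so
\begin{equation*}
P(A_i)=2\cdot\left(\tfrac12\right)^k=\left(\tfrac12\right)^{k-1}.
\end{equation*}
(Note that the objects within a single set are distinct by assumption, which is what makes this probability computation correct; the sets themselves are allowed to overlap, but that overlap is irrelevant for bounding $P(A_i)$ individually.)

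Next I would apply the union bound, which was recalled in section \ref{tools}, to the events $\{A_i\}_{i=1}^N$:
\begin{equation*}
P\bigl(\cup_{i=1}^N A_i\bigr)\le\sum_{i=1}^N P(A_i)=N\left(\tfrac12\right)^{k-1}.
\end{equation*}
Under the hypothesis $N(\tfrac12)^{k-1}<1$, the right-hand side is strictly less than $1$, and therefore
\begin{equation*}
P\bigl(\cap_{i=1}^N A_i^c\bigr)=1-P\bigl(\cup_{i=1}^N A_i\bigr)>0.
\end{equation*}
Since an event with strictly positive probability is in particular nonempty, there exists at least one $\omega\in\Omega$ for which none of the events $A_i$ occurs; the coloring corresponding to this $\omega$ is a two-coloring of the objects in which no one of the $N$ sets is monochromatic.

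There is essentially no obstacle here: the argument is entirely mechanical once one adopts the probabilistic viewpoint, and this is in fact the canonical introductory example illustrating why the method is so attractive. The only small conceptual point worth emphasizing is that independence of the coloring of the individual objects is used only \emph{within} a fixed set (to compute each $P(A_i)$), while the (possibly severe) dependence between the events $A_i$ for different $i$ is absorbed harmlessly by the union bound.
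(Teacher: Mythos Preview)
Your proof is correct and follows exactly the same approach as the paper: random independent two-coloring, computation of $P(A_i)=(\tfrac12)^{k-1}$, and a union bound to conclude that the bad event has probability strictly less than $1$. The only differences are cosmetic (red/blue versus black/white, and a slightly more explicit discussion of where independence is used).
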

\begin{proof}
Call the two colors black and white.
To use  the probabilistic method, one colors all the objects independently and at random so that each object
is colored  black or white with equal probabilities $\frac12$.
Let $A_j$ be the event that the $j$th set is monochromatic. Then
$P(A_j)=(\frac12)^k+(\frac12)^k=(\frac12)^{k-1}$.
Let $A=\cup_{j=1}^NA_j$ be the event that at least one of the $N$ sets is monochromatic.
Then $P(A)\le \sum_{j=1}^NP(A_j)=N(\frac12)^{k-1}$.
 If $N(\frac12)^{k-1}<1$, then $P(A)<1$, which guarantees that $A^c\neq\emptyset$.
 \end{proof}

 \medskip
\noindent \bf Maximal antichains and Sperner's Theorem.\rm\
Let $n\in\mathbb{N}$. Recall that a family $\mathcal{F}_n$ of subsets of $[n]$ is called an
\it antichain\rm\ in $[n]$ if no set belonging to  $\mathcal{F}_n$ is contained in another set belonging
 to  $\mathcal{F}_n$.
 \begin{theorem}\label{anti}
 Let $\mathcal{F}_n$ be an antichain in $[n]$. Then
\begin{equation}\label{antichain}
 \sum_{A\in\mathcal{F}_n}\frac1{\binom n{|A|}}\le 1.
 \end{equation}
 \end{theorem}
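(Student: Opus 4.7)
The plan is to use the probabilistic method by producing a random maximal chain in the Boolean lattice and counting the expected number of antichain elements it hits. This is the classical LYM-style argument.

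First I would take a uniformly random permutation $\sigma$ drawn from $S_n$ under the measure $P_n$ of Section \ref{Chinese}, and build from it the random maximal chain
$$C_\sigma:\ \emptyset\subset\{\sigma(1)\}\subset\{\sigma(1),\sigma(2)\}\subset\cdots\subset\{\sigma(1),\ldots,\sigma(n)\}=[n],$$
which contains exactly one set of each cardinality $0,1,\ldots,n$. For a fixed $A\subset[n]$ with $|A|=k$, the event that $A$ appears in $C_\sigma$ is the event $\{\sigma(1),\ldots,\sigma(k)\}=A$; a direct count of permutations gives
$$P_n(A\in C_\sigma)=\frac{k!(n-k)!}{n!}=\frac{1}{\binom{n}{k}}.$$

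Next I would introduce, for each $A\in\mathcal{F}_n$, the indicator random variable $1_{\{A\in C_\sigma\}}$, and let
$$X=\sum_{A\in\mathcal{F}_n}1_{\{A\in C_\sigma\}}$$
count the number of members of $\mathcal{F}_n$ that lie on the random chain $C_\sigma$. By linearity of expectation,
$$E_n X=\sum_{A\in\mathcal{F}_n}\frac{1}{\binom{n}{|A|}}.$$

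The key observation — and the only nontrivial point — is that every maximal chain in the Boolean lattice on $[n]$ is totally ordered by inclusion, so if it contained two distinct members $A,B$ of $\mathcal{F}_n$, one would satisfy $A\subsetneq B$, contradicting the antichain property of $\mathcal{F}_n$. Hence $X\le 1$ pointwise on $S_n$, so $E_n X\le 1$, which is precisely \eqref{antichain}. There is no real obstacle here; the only thing to be careful about is the combinatorial verification that a maximal chain meets any antichain in at most one element, which is immediate from the definitions.
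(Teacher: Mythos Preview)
Your argument is correct and is essentially identical to the paper's proof: both generate a random maximal chain from a uniform permutation, compute $P_n(A\in C_\sigma)=1/\binom{n}{|A|}$, and use that a chain meets an antichain in at most one set to bound the expected count by $1$. The only cosmetic difference is that you include $\emptyset$ in the chain while the paper's $\mathcal{C}_\sigma$ starts at the singleton level, which is immaterial.
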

 As a corollary of Theorem \ref{anti}, we obtain \it Sperner's Theorem.
 \begin{corollary}
Let $\mathcal{F}_n$ be an antichain in $[n]$. Then
\begin{equation}
 |\mathcal{F}_n|\le\binom n{\lfloor \frac n2\rfloor}.
\end{equation}
 \end{corollary}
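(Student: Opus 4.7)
The plan is to deduce Sperner's Theorem directly from the LYM-type inequality \eqref{antichain} by exploiting the unimodality of the binomial coefficients $\binom{n}{k}$ in the variable $k$.

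The key observation I would invoke is that, among all binomial coefficients $\binom{n}{k}$ with $0 \le k \le n$, the maximum is attained at $k = \lfloor n/2 \rfloor$. Consequently, for every subset $A \subset [n]$ one has the pointwise lower bound
\[
\frac{1}{\binom{n}{|A|}} \;\ge\; \frac{1}{\binom{n}{\lfloor n/2 \rfloor}}.
\]
Summing this inequality over all $A \in \mathcal{F}_n$ yields
\[
\frac{|\mathcal{F}_n|}{\binom{n}{\lfloor n/2 \rfloor}} \;=\; \sum_{A \in \mathcal{F}_n} \frac{1}{\binom{n}{\lfloor n/2 \rfloor}} \;\le\; \sum_{A \in \mathcal{F}_n} \frac{1}{\binom{n}{|A|}},
\]
and the right-hand side is bounded by $1$ courtesy of Theorem \ref{anti}. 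Multiplying through by $\binom{n}{\lfloor n/2 \rfloor}$ gives the desired bound $|\mathcal{F}_n| \le \binom{n}{\lfloor n/2 \rfloor}$.

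There is no real obstacle here; the only ingredient beyond the LYM inequality is the elementary fact that the central binomial coefficient dominates all others, which follows from the identity $\binom{n}{k+1}/\binom{n}{k} = (n-k)/(k+1)$ being $\ge 1$ precisely when $k < n/2$. Thus the entire proof of the corollary is essentially a single line, and the substantive content of Sperner's theorem is already fully absorbed by Theorem \ref{anti}.
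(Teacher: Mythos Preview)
Your proof is correct and follows exactly the same approach as the paper: both use that $\binom{n}{a}$ is maximized at $a=\lfloor n/2\rfloor$ to bound each term in the LYM sum \eqref{antichain} below by $1/\binom{n}{\lfloor n/2\rfloor}$, giving $|\mathcal{F}_n|/\binom{n}{\lfloor n/2\rfloor}\le 1$.
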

\noindent \it Proof of Corollary.\rm\ The function $a\to\binom na$ is maximized at $a=\lfloor\frac n2\rfloor$.
Thus, from \eqref{antichain}, we
have
$|\mathcal{F}_n|\frac1{\binom n{\lfloor \frac n2\rfloor}}\le 1$.
\hfill $\square$

Sperner's theorem originally appeared in \cite{Sperner}. See \cite{AZG} for a short non-probabilistic proof.

\medskip

\noindent \it Proof of Theorem \ref{anti}.\rm\
We follow the argument in \cite{AS}. Fix an antichain $\mathcal{F}_n$ in $[n]$.
For each permutation $\sigma\in S_n$, define the family  $\mathcal{C}_\sigma$ of subsets of $[n]$
by
$$
\mathcal{C}_\sigma=\big\{\{\sigma_i:1\le i\le j\}:1\le j\le n\big\}.
$$
Let $X_n(\sigma)=|\mathcal{F}_n\cap\mathcal{C}_\sigma|$ denote the number of subsets of $[n]$ common to
both $\mathcal{F}_n$ and $\mathcal{C}_\sigma$.
We now consider $S_n$ with the uniform probability measure $P_n$; this turns $X_n=X_n(\sigma)$
into a real-valued random variable and $\mathcal{C}=\mathcal{C}_\sigma$ into a family-of-sets-valued random variable
on the probability space $(S_n,P_n)$. We represent $X_n$ as a sum of indicator random variables.
For $A\in\mathcal{F}_n$, let $I_A$ equal 1 if $A\in\mathcal{C}$ and 0 otherwise. Then
$$
X_n=\sum_{A\in\mathcal{F}_n}I_A
$$
and
\begin{equation}\label{expectindic}
E_nX_n=\sum_{A\in\mathcal{F}_n}E_nI_A=\sum_{A\in\mathcal{F}_n}P_n(A\in\mathcal{C}).
\end{equation}
The random family $\mathcal{C}$ of subsets of $[n]$ contains exactly one subset of size $|A|$.
Since $\sigma$ is a uniformly random permutation, this one subset of size $|A|$ is distributed uniformly over all subsets of size $|A|$. Thus,
\begin{equation}\label{indicatorprob}
P_n(A\in\mathcal{C})=\frac1{\binom n{|A|}}.
\end{equation}
By construction, for any $\sigma\in S_n$, the family of subsets $\mathcal{C}_\sigma$ forms a \it chain\rm; that is
for $A,B\in\mathcal{C}_\sigma$, either $A\subset B$ or $B\subset A$. Thus, since $\mathcal{F}_n$ is an antichain, it follows
that $X_n(\sigma)=|\mathcal{F}_n\cap\mathcal{C}_\sigma|\le 1$, for all $\sigma\in S_n$. In particular then,
\begin{equation}\label{exple1}
E_nX_n\le 1.
\end{equation}
The theorem follows from \eqref{expectindic}-\eqref{exple1}.\hfill $\square$

\medskip

\bf \noindent Acknowledgement.\rm\ I thank Jim Pitman, as this article was much enhanced through  the generous advice and constructive criticism he  proffered upon viewing  a  first draft and then again upon viewing a second draft.

\end{document}